\numberwithin{equation}{section}
\newtheorem{thm}{Theorem}[section]
\newtheorem{lem}{Lemma}[section]
\newtheorem{assum}{Assumption}[section]
\newtheorem{prop}{Proposition}[section]
\newtheorem{defn}{Definition}[section]
\newtheorem{rem}{Remark}[section]
\theoremstyle{definition}
\newcommand{\Cov}{\text{Cov}}
\newcommand{\Var}{\text{Var}}
\numberwithin{equation}{section}
\newcommand{\be}{\begin{equation}}
\newcommand{\ee}{\end{equation}}
\newcommand{\bes}{\begin{equation*}}
\newcommand{\ees}{\end{equation*}}
\newcommand{\R}{\mathbf{R}}
\newcommand{\T}{\mathbf T}
\newcommand{\Z}{\mathbf{Z}}
\newcommand{\N}{\mathbf{N}}
\newcommand{\E}{\mathbb{E}}
\newcommand{\bean}{\begin{eqnarray*}}
\newcommand{\eean}{\end{eqnarray*}}
\newcommand{\ha}{\#}
\newcommand{\HO}{\mathcal{H}}
\newcommand{\HS}{\mathscr{H}}
\newcommand{\cov}{\text{Cov}}
\newcommand{\cu}{\mathscr C_2}
\newcommand{\cl}{\mathscr C_1}
\newcommand{\du}{\mathscr D}
\begin{document}

\title[Small ball probability estimates for the H\"older semi-norm of SHE]{Small ball probability estimates for the H\"older semi-norm of the 
stochastic heat equation}
\author{Mohammud Foondun \and Mathew Joseph \and Kunwoo Kim}
\address{Mohammud Foondun\\ Department of Mathematics and Statistics \\26 Richmond St, Glasgow G1 1XH, United Kingdom} \email{mohammud.foondun@strath.ac.uk}

\address{Mathew Joseph\\ Statmath Unit\\ Indian Statistical Institute\\ 8th Mile Mysore Road\\ Bangalore 560059, India
} \email{m.joseph@isibang.ac.in}

\address{Kunwoo Kim \\Pohang University of Science and Technology (POSTECH), Pohang, Gyeongbuk, South Korea 37673
}
\email{kunwoo@postech.ac.kr}

\keywords{heat equation, white noise, stochastic partial differential 
equations, small ball.}
\subjclass[2010]{Primary, 60H15; Secondary, 60G17, 60G60.}
\begin{abstract}
We consider the stochastic heat equation on $[0,\,1]$ with periodic boundary conditions and driven by space-time white noise. Under various natural conditions, we study small ball probabilities for the H\"older semi-norms of the solutions, and provide near optimal bounds on these probabilities. As an application, we prove a support theorem in these H\"older semi-norms.
\end{abstract}
\maketitle

\section{Introduction and Main results}
We consider the stochastic heat equation (SHE) on $\T:=[0,1]$ with periodic boundary condition and driven by space-time white noise (we identify $\T$ as the one-dimensional torus, i.e., $\T:=\R/\Z$).  This is the real-valued random field $u(t,x),\; t\in \R_+,\, x\in \T$ which solves
\begin{equation}\label{eq:she}
\begin{split} \partial_t u (t,x)&= \frac12\,\partial_x^2 u(t,x) + \sigma\big(t,x, u(t,x)\big) \cdot \dot{W}(t,x),\quad t
\in \R_+,\, x\in \T, 
%u(0,\cdot) &\equiv 0,
\end{split}
\end{equation}
with given initial profile $u(0,\cdot)=u_0: \T \to \R$ and satisfying $u(t,0)=u(t,1)$ for all $t\in \R_+$. The space-time white noise $\dot W$ is a centered generalized Gaussian random field with $\E[\dot W(t,x) \dot W(s,y)]= \delta_0(x-y)\delta_0(t-s)$. We will make the following two assumptions on the function $\sigma: \R_+\times \T\times \R \to \R $.

\begin{assum}\label{assump1}
The function $\sigma$ is uniformly elliptic, that is, there are constants $\mathscr{C}_1>0$ and $\mathscr C_2>0$ such that 
\begin{equation} \label{eq:ellip} \mathscr C_1 \le \sigma(t,x,u) \le \mathscr C_2 \quad \text{for all } t,\, x,\, u.\end{equation}
\end{assum}
\begin{assum}\label{assump2}
The function $\sigma$ is Lipschitz continuous in the third variable, that is there is a constant $\mathscr D>0$ such that  
\begin{equation}\label{eq:lip}
|\sigma(t,x,u) - \sigma(t,x,v)| \le \mathscr D|u-v| \quad \text{for all } t, \, x,\, u,\, v.
\end{equation}
\end{assum}

The existence and uniqueness of solutions to \eqref{eq:she} under the above assumptions are well known. See for example \cite{spde-mini} or \cite{wals} for the proofs and various other properties. It is also known that the solutions are H\"older($\frac12-$) in space and H\"older($\frac14-$) in time. 

In this paper we study the probabilities of the events that the SHE \eqref{eq:she} is unusually regular, as measured in certain H\"older semi-norms, up to a fixed time. To the best of our knowledge, our paper is the first to carry out such a study even though regularity properties of SPDEs have been very well studied. See for instance \cite{HSWX} and \cite{TX} where very precise information about the modulus of continuity is given. 

Our study will be framed as small ball probabilities of these semi-norms. Small ball probabilities have been very well studied in many different settings but most of the results in the literature are for Gaussian processes; see \cite{LS} for an extensive survey on various developments and applications of small ball probabilities for Gaussian processes. However there has not been much progress for non-Gaussain processes. Only a handful of papers have looked at these types of questions for SPDEs. The paper closest to ours is that of the very recent \cite{athr-jose-muel} where the sup norm is considered. In another recent paper \cite{Lot}, heat equations with additive noise are considered under different norms and in \cite{Mar}, the stochastic wave equation is studied.

To phrase our results precisely we need some notations. Fix $0<\theta<\frac12$ and a terminal time $T>0$. For a function $f:[0, T] \times \T\to \R$ and for every $t\in [0, T]$ and $x\in \T$,  let
\[ \mathcal H_t^{(\theta)}(f) := \sup_{x\ne y}\frac{|f(t,x)-f(t,y)|}{|x-y|^{\frac12-\theta}}\]
be the {\it spatial} H\"older semi-norm and let 
\[\HS_x^{(\theta)}(f) := \sup_{0\le s\ne t\le T} \frac{|f(t,x)-f(s,x)|}{|t-s|^{\frac14-\frac\theta 2}}\]
be the {\it temporal} H\"older semi-norm. The H\"older($\frac12-$) regularity in space and the H\"older($\frac14-$) regularity in time of $u$ imply that $\sup_{t\in [0,T]} \HO_t^{(\theta)} (u)$ and $\sup_{x\in \T} \HS_x^{(\theta)} (u)$ are finite a.s. 
%\begin{align*}
 %\mathcal H_T^{(\theta)}(u)  <\infty \quad &\text{and} \quad  \sup_{t\le T} \mathcal H_t^{(\theta)}(u) <\infty \quad \text{a.s.}, \\
%\HS_T^{(\theta)}(u)  <\infty \quad &\text{and} \quad  \sup_{t\le T} \HS_t^{(\theta)}(u) <\infty \quad \text{a.s.}. 
%\end{align*}
The above quantities provide a quantitative way of measuring regularity of functions. This is a natural measure; indeed the smaller $\mathcal H_t^{(\theta)}(f)$ is the closer $f(t,\cdot)$ is to a constant function, and similarly for $\HS_x^{(\theta)}(f)$. We investigate the probability that they are exceptionally small for solutions to the stochastic heat equations. 

There have been a few papers which investigate small ball probabilities for H\"older norms and various Sobolev norms, see for example \cite{kuel-li-shao}, \cite{kuelbs-Li}, \cite{bald-royn},  \cite{AGM} and the references in \cite{LS}. We emphasize that the above results are for Gaussian processes but in general the SPDEs that we consider here will be non-Gaussian. Another novel feature of our work is that we are able to obtain bounds on the probabilities that the $\mathcal H_t^{(\theta)}(f)$ (resp. $\HS_x^{(\theta)}(f)$ ) are {\it uniformly} small in time (resp. space), whereas  \cite{kuel-li-shao}, \cite{kuelbs-Li}, \cite{bald-royn} find bounds on the small ball probabilities of the H\"older norms of Gaussian processes $X(t)$ indexed by only one parameter $t\in\R_+$.

Before we state the main result, we introduce one more notation.  For each $\theta\in (0,\, 1/2)$,  we let $\Lambda=\Lambda(\theta)$ be given by 
 \be \label{eq:lambda} \Lambda(\theta):=\int_{\R} p(1,w) |w|^{\frac12-\theta} \, dw= \frac{2^{\frac{1}{2}-\theta}}{\sqrt{\pi}}\Gamma(1-\theta),\ee
 where $p(t,x)$ is the Gaussian density \eqref{eq:gaussian} and $\Gamma(t)$ is the Gamma function. 
 
\begin{thm}
\label{thm:main}
Let $0<\theta<\frac12$ and $0<\epsilon<1$. Suppose that the initial profile satisfies $\HO_0^{(\theta)}(u) \le \frac{\epsilon}{2}\left(1\wedge \frac{1}{\Lambda}\right)$. Then for any $\eta>0$ there exist positive constants $C_1,C_2,C_3,C_4>0$ dependent on $\cl, \cu,\du, \theta,\eta$ such that %for all $\epsilon$ small enough
\begin{equation} \label{eq:holder:full} C_1\exp\left(- \frac{C_2 T}{\epsilon^{\frac3\theta+\eta}}\right) \le P \left(\sup_{\substack{0\le s, t\le T \\ x, y\in  \T\\(t,x)\ne (s,y)}} \frac{|u(t,x)-u(s,y)|}{|x-y|^{\frac12-\theta}+|t-s|^{\frac14-\frac\theta 2}}\le \epsilon \right)\le C_3\exp\left(- \frac{C_4T}{\epsilon^{\frac3\theta}|\log \epsilon|^{\frac32}}\right).
\end{equation}
\end{thm}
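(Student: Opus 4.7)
Both the upper and lower bounds will be obtained by discretizing $[0,T]$ and exploiting the Markov property of the SHE. Throughout, write $\alpha=\tfrac12-\theta$ and $\beta=\tfrac14-\tfrac\theta2$ for the spatial and temporal H\"older exponents, and set $\Delta=c_0\epsilon^{2/\theta}$ for a small constant $c_0>0$, $t_i=i\Delta$, and $N=\lfloor T/\Delta\rfloor\sim T\epsilon^{-2/\theta}$. The exponent $3/\theta$ will emerge by combining a single-time spatial small-ball estimate of order $\exp(-c/\epsilon^{1/\theta})$ with the multiplication across the $N\sim T/\epsilon^{2/\theta}$ subintervals.

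\textbf{Upper bound.} The event in question implies $\HO_{t_i}^{(\theta)}(u)\le\epsilon$ for every $i$, so the probability is bounded by $\mP\bigl(\bigcap_i\{\HO_{t_i}^{(\theta)}(u)\le\epsilon\}\bigr)$. Conditioning successively on $\F_{t_{i-1}}$ reduces matters to an a.s.\ single-step bound
\[
\mP\bigl(\HO_{t_i}^{(\theta)}(u)\le\epsilon\,\big|\,\F_{t_{i-1}}\bigr)\le\exp\!\Bigl(-\tfrac{c}{\epsilon^{1/\theta}|\log\epsilon|^{3/2}}\Bigr),
\]
whose product over the $N$ intervals gives the claim. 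To prove this, decompose $u(t_i,\cdot)=P_\Delta u(t_{i-1},\cdot)+Z_i$, where $P_\Delta$ is the periodic heat semigroup at time $\Delta$ and $Z_i$ is the stochastic-integral increment on $[t_{i-1},t_i]$. Since $P_\Delta$ contracts the $\HO^{(\theta)}$ semi-norm, on $\{\HO_{t_{i-1}}^{(\theta)}(u)\le\epsilon\}$ the event forces $\HO^{(\theta)}(Z_i)\le2\epsilon$. Testing this against the increments $Z_i(x)-Z_i(x')$ at scale $|x-x'|=h=\epsilon^{1/\theta}$, the choice $\Delta\ge h^2$ ensures the full ``Brownian-like'' regime, and Assumption~\ref{assump1} provides a lower bound $\cl^2\, c\, h$ on the conditional variance of each increment, hence a Gaussian-type bound on its conditional density at zero. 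Combining $\sim 1/h$ nearly decorrelated such constraints via a dyadic small-ball computation in the spirit of \cite{kuel-li-shao} and \cite{bald-royn} yields the required single-step bound, with the $|\log\epsilon|^{3/2}$ factor arising from the careful bookkeeping across dyadic scales.

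\textbf{Lower bound.} Here the strategy is dual. On each subinterval $[t_{i-1},t_i]$ construct a noise event $B_i\in\sigma(W|_{[t_{i-1},t_i]\times\T})$ of probability $\ge\exp(-c/\epsilon^{1/\theta+\eta'})$ on which, uniformly in the starting profile $u(t_{i-1},\cdot)$ of H\"older norm $\le\epsilon$, the joint space-time H\"older semi-norm of the SHE over $[t_{i-1},t_i]\times\T$ stays below $\epsilon$. Because the $B_i$ depend on disjoint portions of $W$ they are independent, so the Markov property and induction on $i$ yield $\mP\bigl(\bigcap B_i\bigr)\ge\prod\mP(B_i)\ge\exp(-cT/\epsilon^{3/\theta+\eta})$. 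Each $\mP(B_i)$ itself is produced by first handling the constant-$\sigma$ case (where the solution minus $P_\cdot u_0$ is a centered Gaussian random field, so Anderson's inequality and classical Gaussian H\"older small-ball bounds apply) and then transferring to multiplicative $\sigma$ by a pathwise comparison that uses the Lipschitz Assumption~\ref{assump2}; the $\eta$ in the final exponent absorbs the (polynomial in $\epsilon$) loss from this transfer.

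\textbf{Main obstacle.} The hardest step will be the conditional single-step small-ball estimate driving the upper bound. Because $\sigma$ depends on the unknown $u$, the increment $Z_i$ is genuinely non-Gaussian and its law given $\F_{t_{i-1}}$ has no closed form, so classical Gaussian H\"older small-ball theory does not directly apply. Extracting a Gaussian-type estimate through the uniform ellipticity of Assumption~\ref{assump1} --- sharply enough to cost only a logarithmic and not a polynomial factor in the exponent --- is the technical heart of the argument, and the delicate interplay between the tested scale $h=\epsilon^{1/\theta}$, the discretization $\Delta$, and the conditional covariance of $Z_i$ is exactly what pins down the optimal $\Delta\sim\epsilon^{2/\theta}$ leading to the sharp $3/\theta$ exponent.
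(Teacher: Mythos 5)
Your outline reproduces the general block-decomposition framework, but at the two places where the real work happens it asserts rather than proves, and in both cases the asserted mechanism would not go through as stated. For the upper bound, the single-step estimate $P\bigl(\HO_{t_i}^{(\theta)}(u)\le\epsilon\,\big|\,\F_{t_{i-1}}\bigr)\le\exp\bigl(-c/(\epsilon^{1/\theta}|\log\epsilon|^{3/2})\bigr)$ is exactly the missing key lemma: since $\sigma$ depends on $u$, the increment $Z_i$ is non-Gaussian, so a lower bound on the conditional variance of a single spatial increment (which one can indeed get from Assumption \ref{assump1}, via a martingale time change) only yields $P(|Z_i(x_j+h)-Z_i(x_j)|\le\epsilon^{1/(2\theta)}\mid\F_{t_{i-1}})\le\eta<1$ for \emph{one} increment; to multiply $\sim 1/h$ such constraints you need an actual independence or decorrelation structure, and the Gaussian H\"older small-ball machinery of \cite{kuel-li-shao}, \cite{bald-royn} that you invoke requires Gaussianity and does not apply. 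The paper resolves precisely this point by replacing $u$ with the localized Picard approximations $V^{(\beta),l}$ of \eqref{truncate}--\eqref{iterates}, whose values at spatial points separated by $\gtrsim|\log\epsilon|^{3/2}\epsilon^{1/\theta}$ are genuinely independent (Lemma \ref{lemma:ind}), together with moment bounds $u\approx V^{(\beta),l}$ (Propositions \ref{prop:V-Vl}, \ref{prop:u-V}); the $|\log\epsilon|^{3/2}$ in the exponent is the price of that spatial separation, not a by-product of dyadic bookkeeping. Without some substitute for this independence construction your conditional bound is unsupported, and this is the heart of the theorem, as you yourself note.

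The lower bound has two further gaps. First, your events $B_i$ cannot be pure noise events in $\sigma(W|_{[t_{i-1},t_i]\times\T})$ that work ``uniformly in the starting profile'': the solution on a block depends jointly on the noise and the entering profile, and the way to make the induction close is the paper's formulation via the Markov property with a conditional bound that is uniform over admissible profiles. Making that bound work requires (i) forcing the profile at the \emph{end} of each block to be strictly smaller (so the next block can start), which the heat semigroup alone does not do over a time step $\Delta\sim\epsilon^{2/\theta}$ — the paper achieves it with a Girsanov change of measure that linearly removes $(G_t*u_0)$ over the block, at Radon--Nikodym cost $\exp(-C/\epsilon^{1/\theta})$ (Lemma \ref{lem:B0}); Anderson's inequality, which you cite, points the wrong way for a lower bound with a nonzero entering profile and does not address this at all; and (ii) the Gaussian correlation inequality to combine the sup-norm, spatial and temporal constraints within a block. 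Second, controlling the global temporal semi-norm $\sup_x\HS_x^{(\theta)}(u)$ from per-block events needs a sup-norm component of size $\sim\epsilon^{1/(2\theta)}$ in each block event (the paper's $U_i^{\ha}$ and Lemma \ref{lem:hold:time}), because time pairs $s,t$ lying in different blocks are handled by bounding $|u(t,x)-u(s,x)|$ by the sup norm against $|t-s|^{\frac14-\frac\theta2}\ge\Delta^{\frac14-\frac\theta2}$; your block events, which only constrain the semi-norm within $[t_{i-1},t_i]\times\T$, do not give this. Your freezing-of-$\sigma$ transfer to the multiplicative case via Assumption \ref{assump2} is in the right spirit (it matches the paper's perturbation argument, with the $\eta$-loss coming from shrinking the block length), but it rests on the Gaussian block estimate that the missing change-of-measure step is needed to provide.
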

One can improve the lower and upper bounds in \eqref{eq:holder:full} by imposing more restrictions on $\sigma$; see Theorems \ref{thm1}, \ref{thm2} and \ref{thm3} below, and Remark \ref{rem:main}. In Section \ref{sec:ext} below we provide several {\it support} theorems where bounds on the probability that $u$ is close (in the above H\"older semi-norm) to a function $h$ in certain classes (such as H\"older spaces) are provided. 

The reader might wonder whether the upper and lower bounds in \eqref{eq:holder:full} hold when we consider the H\"older norm 
\be \label{eq:holder:norm} \| u\|_{\theta, T} : = \|u\|_{\infty, T}+ \sup_{\substack{0\leq s, t \leq T \\ x, y \in \T \\  (t, x) \neq (s, y) }} \,  \frac{|u(t, x)-u(s, y)|}{ |t-s|^{\frac14-\frac\theta 2}+ |x-y|^{\frac12-\frac\theta 2}}, \ee
instead of semi-norm considered in \eqref{eq:holder:full}, where $\|u\|_{\infty, T}:=\sup_{(t,x) \in [0,T]\times \T} |u(t,x)|$. This is {\it not} the case. Indeed, due to the $1:2:4$ scaling of fluctuations, space and time for the SHE, the H\"older semi-norm of the SHE in time-space regions of the form $[t, t+\epsilon^{\frac2\theta}] \times [x, x+\epsilon^{\frac1\theta} ]$ fluctuates by order $\epsilon$. However in these regions, the solution $u$ itself fluctuates by order $\epsilon^{\frac{1}{2\theta}}$. Intuitively, what we try to show in this article is that the $T\epsilon^{-\frac3\theta}$ time-space boxes obtained by dividing $[0,T]\times \T$ into subintervals of the form $[t, t+\epsilon^{\frac2\theta}] \times [x, x+\epsilon^{\frac1\theta} ]$ can be {\it somewhat} viewed as  independent regions. This explains the $T\epsilon^{-\frac3\theta}$ that we obtain in the exponents in \eqref{eq:holder:full}. Moreover, by this reasoning, one should expect similar bounds on the probability $P(\|u\|_{\infty, T} \le \epsilon^{\frac{1}{2\theta}} )$ if we start at $u_0\equiv 0$, for example. In fact this is what was proved in \cite{athr-jose-muel}. Therefore, while it is {\it not} true that we have the same bounds as \eqref{eq:holder:full} for the H\"older norm \eqref{eq:holder:norm}, we do have the same bounds for 
\[ P\left(\|u\|_{\infty,T}\le \epsilon^{\frac{1}{2\theta}},  \;\sup_{\substack{0\le s, t\le T \\  x, y\in \T\\(t,x)\ne (s,y)}} \frac{|u(t,x)-u(s,y)|}{|x-y|^{\frac12-\theta}+|t-s|^{\frac14-\frac\theta 2}}\le \epsilon \right),\]
if we start with $u_0$ such that $\|u_0\|_{\infty} \le \frac{\epsilon^{\frac{1}{2\theta}} }{2}$ and $\HO_0^{(\theta)}(u) \le \frac{\epsilon}{2}\left(1\wedge \frac{1}{\Lambda}\right)$.

\subsection{Results}Instead of looking at the probability of the event in \eqref{eq:holder:full} directly, we consider the probabilities of the events  $\left\{ \sup_{t\in [0,T]} \HO_t^{(\theta)} (u)\le \epsilon\right\}$ and $\left\{\sup_{x\in \T} \HS_x^{(\theta)} (u)\le \epsilon\right\}$ separately. The bounds in \eqref{eq:holder:full} will then follow from the bounds on the probabilities of these two events (see Section \ref{sec:main}). It turns out that the regularity in time (as measured by the smallness of $\HS_x^{(\theta)}(u)$) is intimately connected to the regularity in space (as measured by the smallness of  $\HO_t^{(\theta)}(u)$). Our arguments indicate that for the solution to be {\it regular} in time it is necessary for it to be {\it regular} in space. 
We now state the small ball probability estimates for $\sup_{t\le T} \HO_t^{(\theta)}(u)$ and $\sup_{x\in \T} \HS^{(\theta)}(u)$ with varying assumptions on the nonlinearity $\sigma$. 
\begin{thm}\label{thm1}
Assume that the function $\sigma(t,x,u)$ is independent of $u$ and satisfies Assumption \ref{assump1}. Let $0<\theta<\frac12$ and $0<\epsilon<1$. 
\begin{enumerate}[(a)]
\item \label{part1 }Suppose that the initial profile satisfies $\HO_0^{(\theta)}(u) \le \frac{\epsilon}{2}$. Then there exist positive constants $C_1, C_2,C_3,C_4 >0$ dependent only on $\cl,\cu,\theta$ such that %for all $\epsilon$ small enough
\begin{equation} \label{eq:unif:t} C_1\exp\left(- \frac{C_2 T}{\epsilon^{\frac3\theta}}\right) \le P \left(\sup_{0\le t\le T} \HO_t^{(\theta)}\left(u\right) \le \epsilon \right)\le C_3\exp\left(- \frac{C_4T}{\epsilon^{\frac 3\theta}}\right).
\end{equation}
\item% \begin{thm}\label{thm1:x}
Suppose that the initial profile satisfies $\HO_0^{(\theta)}(u) \le  \frac{\epsilon}{2\Lambda}$. Then there exist positive constants $C_1, C_2,C_3,C_4 >0$ dependent only on $\cl,\cu,\theta$ such that %for all $\epsilon$ small enough
\begin{equation} \label{eq:unif:x} C_1\exp\left(- \frac{C_2 T}{\epsilon^{\frac3\theta}}\right) \le P \left(\sup_{x\in \T} \HS_x^{(\theta)}\left(u\right) \le \epsilon \right)\le C_3\exp\left(- \frac{C_4T}{\epsilon^{\frac 3\theta}}\right).
\end{equation}
\end{enumerate}
\end{thm}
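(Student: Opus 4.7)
Since $\sigma$ is independent of $u$, the mild solution
\[
u(t,x) = (G_t \ast u_0)(x) + Z(t,x),\qquad Z(t,x) := \int_0^t\!\!\int_\T G_{t-s}(x-y)\sigma(s,y)\,W(ds,dy),
\]
is a Gaussian random field, so Gaussian small-ball techniques are available. The heat semigroup contracts the spatial H\"older seminorm, $\HO_t^{(\theta)}(G_t \ast u_0) \le \HO_0^{(\theta)}(u_0)$; a standard heat-kernel computation also yields the temporal bound $\sup_x \HS_x^{(\theta)}(G_\cdot \ast u_0) \le \Lambda(\theta)\,\HO_0^{(\theta)}(u_0)$, where $\Lambda$ from \eqref{eq:lambda} appears precisely as $\int G_1(w)|w|^{1/2-\theta}\,dw$. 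The hypotheses on $u_0$ are tuned so that the deterministic part of $u$ absorbs at most half of the budget $\epsilon$ in the relevant seminorm, reducing both parts to the small-ball problem for $Z$. The parabolic $1\!:\!2\!:\!4$ scaling identifies a rectangle of dimensions $\epsilon^{1/\theta} \times \epsilon^{2/\theta}$ as the natural space--time window on which either H\"older seminorm of $Z$ fluctuates on order $\epsilon$, and $[0,T]\times\T$ decomposes into $\sim T\epsilon^{-3/\theta}$ such windows; the entire proof amounts to turning these windows into nearly independent factors.

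\textbf{Upper bound in (a).} Set $T_k := k\epsilon^{2/\theta}$, $x_i := i\epsilon^{1/\theta}$, and $Z_k(t,x) := \int_{T_k}^t \int_\T G_{t-s}(x-y)\sigma(s,y) W(ds,dy)$. The fields $Z_k(T_{k+1},\cdot)$ are independent across $k$ by the independent increment property of white noise. On the event $\{\sup_t \HO_t^{(\theta)}(u) \le \epsilon\}$, applying heat-semigroup contraction at $T_k$ and subtracting from $u(T_{k+1},\cdot) = G_{\epsilon^{2/\theta}}\ast u(T_k,\cdot) + Z_k(T_{k+1},\cdot)$ forces
\[
D_{k,i} := Z_k(T_{k+1},x_{i+1})-Z_k(T_{k+1},x_i)\quad\text{to satisfy}\quad |D_{k,i}| \le 2\epsilon\,|x_{i+1}-x_i|^{1/2-\theta} = 2\epsilon^{1/(2\theta)}.
\]
Under Assumption \ref{assump1}, $D_{k,i}$ is a centered Gaussian with $\Var(D_{k,i}) \asymp \epsilon^{1/\theta}$, so $P(|D_{k,i}| \le 2\epsilon^{1/(2\theta)}) \le 1-c_0$ for a universal $c_0>0$. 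A direct covariance calculation for $Z_k(T_{k+1},\cdot)$ shows Gaussian-fast decay of $\Cov(D_{k,i},D_{k,j})$ in $|i-j|$ beyond the diffusive scale, so retaining every $L$-th spatial index (for a large fixed $L$) yields a subfamily of size $\asymp \epsilon^{-1/\theta}/L$ that is close in joint distribution to i.i.d.\ Gaussians; a quantitative Gaussian comparison (Slepian-type for convex symmetric events) upgrades this to a per-slab bound $(1-c_0/2)^{c\epsilon^{-1/\theta}}$. Multiplying across the $\lfloor T\epsilon^{-2/\theta}\rfloor$ independent slabs produces the claimed $\exp(-cT/\epsilon^{3/\theta})$.

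\textbf{Lower bound in (a).} By iterating the Markov property at the times $T_k$, it suffices to prove the uniform one-slab estimate: for every $f$ with $\HO^{(\theta)}(f) \le \epsilon/2$,
\[
P_f\Bigl(\sup_{0 \le t \le \epsilon^{2/\theta}} \HO_t^{(\theta)}(u) \le \epsilon,\ \HO_{\epsilon^{2/\theta}}^{(\theta)}(u) \le \epsilon/2\Bigr) \ge \exp(-c\epsilon^{-1/\theta}),
\]
and then multiply across the $\lfloor T\epsilon^{-2/\theta}\rfloor$ slabs. Heat contraction again strips off the deterministic piece, reducing the matter to a small-ball event for $Z$ started at $0$. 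Under the parabolic rescaling $t\mapsto \epsilon^{-2/\theta}t$, $x\mapsto \epsilon^{-1/\theta}x$, $Z\mapsto \epsilon^{-1/(2\theta)}Z$, the event transforms into the small-ball event at an $O(1)$ threshold for an SHE-type Gaussian field $\widetilde Z$ on $[0,1]\times[0,M]$ with $M=\epsilon^{-1/\theta}$. I would decompose $[0,M]$ into $M$ unit sub-intervals; on each, the restriction of the H\"older seminorm of $\widetilde Z$ being $\le$ const is a symmetric convex Gaussian event of constant positive probability, by standard SHE regularity on a unit box together with Assumption \ref{assump1}. Royen's Gaussian correlation inequality lower-bounds the intersection of these symmetric convex events by the product, giving $\exp(-cM)=\exp(-c\epsilon^{-1/\theta})$; cross-interval pairs in the full seminorm cost only a multiplicative constant because once $|x-y|$ exceeds the diffusive scale the denominator $|x-y|^{1/2-\theta}$ dominates the numerator.

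\textbf{Part (b) and the main obstacle.} The analysis of $\sup_x \HS_x^{(\theta)}(u)$ runs in perfect parallel, using the same time slabs and spatial grid: the temporal increments $u(T_{k+1},x_i)-u(T_k,x_i)$ have variance $\asymp \epsilon^{1/\theta}$ by Assumption \ref{assump1}, so the normalized event again has per-increment probability bounded away from $1$; the heat-kernel identity producing $\Lambda(\theta)$ is exactly what makes the hypothesis $\HO_0^{(\theta)}(u_0) \le \epsilon/(2\Lambda)$ ensure $\sup_x \HS_x^{(\theta)}(G_\cdot\ast u_0) \le \epsilon/2$ and thereby reduce to the stochastic-convolution problem. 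The main technical obstacle throughout is converting ``approximately independent'' into a clean multiplicative statement: for the upper bound this requires a quantitative Gaussian decoupling of the weakly correlated $D_{k,i}$ that does not degrade the exponent; for the lower bound it requires showing that the one-slab small-ball probability has the right shape $\exp(-cM)$, which rests on a careful application of the Gaussian correlation inequality together with uniform positivity of the per-unit-interval small-ball probabilities independent of the rest of the rescaled torus.
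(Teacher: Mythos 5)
Your overall architecture (Gaussianity of $u$, the parabolic $1\!:\!2\!:\!4$ scaling, slab decomposition, Markov-property chaining, Royen's inequality for the lower bound) is the same as the paper's, but both halves of your argument have a genuine gap at the step you yourself flag as the obstacle. For the upper bound, the reduction to the noise increments $D_{k,i}$ and the variance bound $\mathrm{Var}(D_{k,i})\asymp\epsilon^{1/\theta}$ are fine, but the decoupling does not work as proposed. When $\sigma(t,x)$ is non-constant the increment covariances are small but of indefinite sign, so Slepian-type comparisons for the events $\{|D_{k,i}|\le 2\epsilon^{1/(2\theta)}\}$ go in the wrong direction (the paper's Remark \ref{rem:32} notes Slepian is available only for constant $\sigma$, where the covariances are nonpositive); and any quantitative comparison of the thinned family with an i.i.d.\ one (normal comparison lemma, or a total-variation bound) produces an \emph{additive} error of order $n\,e^{-cL^2}$ with $n\asymp \epsilon^{-1/\theta}/L$ points, which for fixed $L$ swamps the exponentially small target $(1-c_0/2)^{cn}$; so the per-slab bound does not follow. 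The paper instead conditions \emph{sequentially within a slab}: it proves the conditional variance of $\tilde\Delta_j$ given $\tilde\Delta_0,\dots,\tilde\Delta_{j-1}$ is at least $C\epsilon^{1/\theta}$, via the factorization $\mathbf S=\mathbf D(\mathbf I-\mathbf A)\mathbf D$ of the covariance matrix and $\|\mathbf A\|_{1,1}\le\tfrac13$ once the grid spacing is $c_1\epsilon^{1/\theta}$ with $c_1=K\cu^3/\cl^3$ large (Lemmas \ref{lem:var:covar} and \ref{lem:a1}); since a conditional mean shift cannot increase the probability that a Gaussian lies in a symmetric interval, each conditional factor is $\le\eta<1$ and the product is exact rather than approximate. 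You would need this (or an equivalent regression/conditioning device) in place of your comparison step.

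For the lower bound, your one-slab renewal event asks for $\HO^{(\theta)}_{\epsilon^{2/\theta}}(u)\le\epsilon/2$ at the end of the slab, starting from data $f$ with $\HO^{(\theta)}(f)\le\epsilon/2$, and you reduce this to a small-ball event for the stochastic convolution by ``heat contraction.'' That reduction fails: over a time $t_1=\epsilon^{2/\theta}$ the heat semigroup gives $\HO^{(\theta)}(G_{t_1}*f)\le\HO^{(\theta)}(f)$ but no quantitative strict decrease (take $f$ slowly varying, with the seminorm attained at $|x-y|\asymp1$; then $\HO^{(\theta)}(G_{t_1}*f)$ is still essentially $\epsilon/2$), so at the terminal time the deterministic part can exhaust the whole budget and the event you would need for $Z$ has no room; the uniform estimate $\exp(-c\epsilon^{-1/\theta})$ per slab, and hence the iteration, does not follow. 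The paper resolves precisely this by a Girsanov change of measure whose drift $-(G_r*u_0)(z)/t_1$ removes the initial profile linearly across the slab, so the deterministic term vanishes at time $t_1$ and only the noise must be small; the Radon--Nikodym cost is $\exp\bigl(C/(\cl^2\epsilon^{1/\theta})\bigr)$, of the correct order, but it forces the chaining events to also control the sup norm, $\|u(t_i,\cdot)\|_\infty\lesssim\epsilon^{1/(2\theta)}$ (the events $U_i$ in \eqref{eq:U}), which your scheme omits. The same omission matters even more in part (b): the temporal seminorm couples times in different slabs, and for such pairs one bounds the numerator by the sup norm (Lemma \ref{lem:hold:time}), so the sup-norm constraint is not optional there; ``runs in perfect parallel'' hides both this and the need for the additional temporal-increment events $T_i^{\ha}$.
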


It can be shown that $u$ is a Gaussian random field when $\sigma$ does not depend on $u$. The proof of the above theorem takes up a significant part of this paper and hinges on well known results specific to Gaussian processes, as well as the recently proved Gaussian correlation inequality (\cite{roye}). We next consider the case when $\sigma$ can also depend on $u$. 

\begin{thm} \label{thm2} Suppose that $\sigma(t,x,u)$ satisfies both Assumptions \ref{assump1} and \ref{assump2}. Let $0<\theta<\frac12$ and $0<\epsilon<1$. 
 \begin{enumerate}[(a)]
 \item Assume that the initial profile satisfies $\HO_0^{(\theta)}(u) \le \frac{\epsilon}{2}$. Then for any $\eta>0$ there exist positive constants $C_1,C_2,C_3,C_4>0$ dependent on $\cl, \cu,\mathscr D, \theta,\eta$ such that %for all $\epsilon$ small enough
\begin{equation} \label{eq:unif:t:2} C_1\exp\left(- \frac{C_2 T}{\epsilon^{\frac3\theta+\eta}}\right) \le P \left(\sup_{0\le t\le T} \HO_t^{(\theta)}\left(u\right) \le \epsilon \right)\le C_3\exp\left(- \frac{C_4T}{\epsilon^{\frac3\theta}|\log \epsilon|^{\frac32}}\right).
\end{equation}
\item  Assume that the initial profile satisfies $\HO_0^{(\theta)}(u) \le \frac{\epsilon}{2\Lambda}$. Then for any $\eta>0$ there exist positive constants $C_1,C_2,C_3,C_4>0$ dependent on $\cl, \cu, \mathscr D, \theta,\eta$ such that %for all $\epsilon$ small enough
\begin{equation} \label{eq:unif:x:2} C_1\exp\left(- \frac{C_2 T}{\epsilon^{\frac3\theta+\eta}}\right) \le P \left(\sup_{x\in \T} \HS_x^{(\theta)}\left(u\right) \le \epsilon \right)\le  C_3\exp\left(- \frac{C_4T}{\epsilon^{\frac3\theta}|\log \epsilon|^{\frac32}}\right).
\end{equation}

\end{enumerate}
\end{thm}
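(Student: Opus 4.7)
The plan is to reduce to Theorem \ref{thm1} via a time-localization and freezing argument; I describe the approach for part (a) in detail, and part (b) will follow along analogous lines.

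\textbf{Lower bound.} Fix a small mesh $\delta = \delta(\epsilon)$ to be optimized and partition $[0,T]$ into $N = \lceil T/\delta \rceil$ slabs $I_k = [t_k, t_{k+1}]$ with $t_k = k\delta$. On each slab I introduce the \emph{frozen-coefficient companion} $\tilde u_k$ solving
\begin{equation*}
\partial_t \tilde u_k = \tfrac12 \partial_x^2 \tilde u_k + \sigma(t,x,u(t_k,x))\, \dot W, \qquad \tilde u_k(t_k,\cdot) = u(t_k,\cdot),
\end{equation*}
on $I_k \times \T$. Conditional on $\F_{t_k}$, the noise coefficient $\sigma(t,x,u(t_k,x))$ is a deterministic function of $(t,x)$ still satisfying Assumption \ref{assump1}, so $\tilde u_k$ is conditionally Gaussian and Theorem \ref{thm1}(a) yields
\begin{equation*}
P\Big(\sup_{t \in I_k} \HO_t^{(\theta)}(\tilde u_k) \le \tfrac\epsilon 4 \,\Big|\, \F_{t_k}\Big) \ge C_1 \exp\big(-C_2 \delta \epsilon^{-3/\theta}\big)
\end{equation*}
on the event $\{\HO_{t_k}^{(\theta)}(u) \le \epsilon/8\}$. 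The Lipschitz hypothesis \eqref{eq:lip}, combined with the standard parabolic moment bound $\mE|u(r,y) - u(t_k,y)|^p \le C_p |r-t_k|^{p(1/4-\theta/2)}$ and a Kolmogorov-type argument, will then give $\sup_{t \in I_k} \HO_t^{(\theta)}(u - \tilde u_k) \le C_5 \delta^{\alpha}$ with high probability, for some $\alpha = \alpha(\theta) > 0$. Iterating the Markov property over the $N$ slabs and absorbing the freezing errors gives the product estimate $P(\sup_{t \le T} \HO_t^{(\theta)}(u) \le \epsilon) \ge C_1^N \exp(-C_2 T \epsilon^{-3/\theta})$. Choosing $\delta = \epsilon^{3/\theta + \eta'}$ for a small $\eta' = \eta'(\eta) > 0$ balances the $C_1^N$ factor against the main exponent and also makes the freezing error negligible compared to $\epsilon$, producing the claimed $\exp(-C_2 T \epsilon^{-3/\theta - \eta})$ bound.

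\textbf{Upper bound.} The direction reverses. With the same slab decomposition, Theorem \ref{thm1}(a) applied to the frozen companions gives per-slab estimates
\begin{equation*}
P\Big(\sup_{t \in I_k} \HO_t^{(\theta)}(\tilde u_k) \le \epsilon \,\Big|\, \F_{t_k}\Big) \le C_3 \exp\big(-C_4 \delta \epsilon^{-3/\theta}\big),
\end{equation*}
and the Lipschitz comparison shows that on the event $\{\sup_{t \le T} \HO_t^{(\theta)}(u) \le \epsilon\}$ one has $\sup_{t \in I_k} \HO_t^{(\theta)}(\tilde u_k) \le (1+o(1))\epsilon$ provided the freezing error is small. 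Multiplying conditional probabilities through the Markov property, and optimizing $\delta$ against a chaining step needed to uniformize the freezing errors across all $N$ slabs, should yield the upper bound with the $|\log \epsilon|^{3/2}$ correction.

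\textbf{Main obstacle.} The central difficulty is that the coefficient $\sigma(t,x,u(t,x))$ depends on the very process being analyzed, so conditioning on $\F_{t_k}$ does not restore the Gaussian structure required to invoke Theorem \ref{thm1} directly. The freezing approximation replaces $u$ on each slab by a conditionally Gaussian surrogate, but the error term must itself be controlled in the same H\"older seminorm the theorem seeks to bound, requiring sharp Kolmogorov-type estimates on the difference $u - \tilde u_k$. Balancing this error against the number of slabs is the delicate step, and is precisely where the $\epsilon^{\eta}$ loss (lower bound) and the $|\log\epsilon|^{3/2}$ factor (upper bound) are forced.
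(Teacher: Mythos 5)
Your lower-bound sketch follows the same perturbation/freezing idea as the paper, but two load-bearing steps are not secured. First, the Lipschitz hypothesis controls the freezing error only through $|u(s,y)-u(t_k,y)|$, and the event $\{\sup_t \HO_t^{(\theta)}(u)\le\epsilon\}$ gives no control at all on this quantity (smallness of the spatial seminorm says nothing about the sup norm or the temporal increments of $u$). The paper therefore builds sup-norm constraints into the per-slab events (the events $U_i$/$R_i$, which keep $\|u(t,\cdot)\|_\infty\lesssim\epsilon^{\frac1{2\theta}}$) and introduces a stopping time $\tau$ so that the error integrand is deterministically bounded by $2\mathscr D\epsilon^{\frac1{2\theta}}$; only then do the exponential tail bounds of Lemma \ref{lem:Nt:tail} and Remarks \ref{rem:N:tail}, \ref{rem:Nt:tail} make the error's failure probability $o(\exp(-C\epsilon^{-1/\theta}))$, which is what is needed to be negligible against the Gaussian per-slab probability. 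A Kolmogorov-type argument from moment bounds cannot reach that regime. Second, your slab-length bookkeeping is off: the per-slab cost is not of the form $C_1\exp(-C_2\delta\epsilon^{-3/\theta})$ with $C_1$ independent of $\delta$; e.g.\ the change-of-measure term in Lemma \ref{lem:B0} scales like $(c_0\epsilon^{1/\theta})^{-1}$, so the cost blows up as the slab shrinks. The correct choice is a slab only slightly shorter than $\epsilon^{2/\theta}$ (the paper takes $t_1=c_0\epsilon^{(2+\eta)/\theta}$), which is exactly what produces the $\epsilon^{-3/\theta-\eta}$ exponent; your choice $\delta=\epsilon^{3/\theta+\eta'}$ would not give the claimed rate.

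The upper bound is where the proposal genuinely fails. For an upper bound you cannot add sup-norm constraints to the event, so the freezing error is only ``typically'' of size $\delta^{1/4}$, and the exceptional event on which it is large has probability of order $\exp(-c\epsilon^{-\eta/(2\theta)})$ for slabs of length $\epsilon^{(2+\eta)/\theta}$ --- vastly larger than the per-slab factor $\exp(-c\epsilon^{-1/\theta}|\log\epsilon|^{-3/2})$ you are trying to establish, so it swamps the estimate; conversely, slabs short enough for the error tail to beat $\exp(-c\epsilon^{-1/\theta})$ make the conditional per-slab Gaussian bound trivial, since against an arbitrary profile at the slab start the injected noise has variance $\ll\epsilon^{1/\theta}$ and the increment probability is no longer bounded away from $1$. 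This is precisely why the paper states that the perturbation argument cannot be implemented for upper bounds, and instead proves the upper bound by a completely different mechanism: the spatially localized Picard iterates $V^{(\beta),l}$ of \eqref{truncate}--\eqref{iterates}, which have finite range of dependence and hence are independent at spatial points separated by $\rho\sim|\log\epsilon|^{3/2}\epsilon^{1/\theta}$ (Lemma \ref{lemma:ind}), moment comparisons $u\approx V^{(\beta),l}$ with $\beta=l\sim|\log\epsilon|$ and $p\sim\sqrt{|\log\epsilon|}\,\epsilon^{-1/\theta}$ (Propositions \ref{prop:V-Vl} and \ref{prop:u-V}), and a time-change/coupling argument for the martingale increments $u(t_1,x_{2j+1})-u(t_1,x_{2j})$; the $|\log\epsilon|^{3/2}$ loss comes from this spatial separation, not from a chaining step over freezing errors. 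As written, the upper-bound half of your argument does not go through, and part (b) inherits the same issues.
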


Our next result sharpens the lower bound of the above theorem by imposing a further restriction on the Lipschitz coefficient of $\sigma$.  

\begin{thm}\label{thm3}Suppose that $\sigma(t,x,u)$ satisfies both Assumptions \ref{assump1} and \ref{assump2}. Let $0<\theta<\frac12$ and $0<\epsilon<1$.  
\begin{enumerate}[(a)]
\item Assume that the initial profile satisfies $\HO_0^{(\theta)}(u) \le \frac{\epsilon}{2}$. Then there is a  $\du_0>0$ such that for all $\du<\du_0$, there exist positive constants $C_1, C_2, C_3, C_4  >0$ dependent only on $\cl,\cu,\theta$ such that %for all $\epsilon$ small enough
\begin{equation}
 \label{eq:unif:t:3}
 C_1\exp\left(- \frac{C_2T}{\epsilon^{\frac 3\theta}}\right) \le  P \left(\sup_{0\le t\le T} \HO_t^{(\theta)}\left(u\right) \le \epsilon \right)\le C_3\exp\left(- \frac{C_4T}{\epsilon^{\frac3\theta}|\log \epsilon|^{\frac32}}\right).
 \end{equation}
 \item  Assume that the initial profile satisfies $\HO_0^{(\theta)}(u) \le \frac{\epsilon}{2\Lambda}$. Then there is a  $\du_1>0$ such that for all $\du<\du_1$, there exist positive constants $C_1, C_2, C_3, C_4  >0$ dependent only on $\cl,\cu,\theta$ such that %for all $\epsilon$ small enough
\begin{equation} \label{eq:unif:x:2} C_1\exp\left(- \frac{C_2 T}{\epsilon^{\frac3\theta}}\right) \le P \left(\sup_{x\in \T} \HS_x^{(\theta)}\left(u\right) \le \epsilon \right)\le C_3\exp\left(- \frac{C_4T}{\epsilon^{\frac3\theta}|\log \epsilon|^{\frac32}}\right).
\end{equation}
 \end{enumerate}
\end{thm}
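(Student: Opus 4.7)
The upper bounds in both (a) and (b) follow directly from the corresponding upper bounds of Theorem~\ref{thm2}, which hold under Assumptions~\ref{assump1}--\ref{assump2} with no smallness restriction on $\du$. The new content is in the lower bounds, which sharpen the exponent $\epsilon^{3/\theta+\eta}$ of Theorem~\ref{thm2} to the Gaussian-type exponent $\epsilon^{3/\theta}$ appearing in Theorem~\ref{thm1}. The guiding idea is that when $\du$ is small, equation~\eqref{eq:she} is a small Lipschitz perturbation of a Gaussian SHE, so the sharp Gaussian small-ball bound of Theorem~\ref{thm1} should transfer to the nonlinear solution $u$. I outline the plan for (a); part (b) is parallel, invoking Theorem~\ref{thm1}(b) and the $1/\Lambda$-normalized initial condition.

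\textbf{Gaussian comparison.} Freeze the multiplier by setting $\bar\sigma(t,x):=\sigma(t,x,u_0(x))$, a deterministic function satisfying $\cl\le\bar\sigma\le\cu$ by Assumption~\ref{assump1}. Let $v$ be the solution of the linear (Gaussian) SHE with multiplier $\bar\sigma$, driven by the same white noise $W$ and starting from $u_0$. After a cosmetic rescaling of $\epsilon$ absorbed into the constants, Theorem~\ref{thm1}(a) yields
\[
P(E_v) \;\ge\; c_1\exp\Bigl(-\frac{c_2 T}{\epsilon^{3/\theta}}\Bigr),\qquad E_v:=\Bigl\{\sup_{0\le t\le T}\HO_t^{(\theta)}(v)\le \tfrac{\epsilon}{2}\Bigr\},
\]
with $c_1,c_2$ depending only on $\cl,\cu,\theta$. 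It then suffices to show $P\bigl(\sup_t\HO_t^{(\theta)}(u-v)>\epsilon/2,\,E_v\bigr)\le \tfrac12 P(E_v)$, whence intersecting gives the claimed lower bound with $\du$-independent constants.

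\textbf{Picard iteration.} To control $w:=u-v$, build $u$ by Picard iteration: $u^{(0)}:=v$ and
\[
u^{(n+1)}(t,x):=(G_t u_0)(x)+\int_0^t\!\!\int_\T G(t-s,x-y)\,\sigma\bigl(s,y,u^{(n)}(s,y)\bigr)\,W(ds,dy),
\]
where $G$ is the periodic heat kernel on $\T$. Standard Lipschitz SPDE theory gives $u^{(n)}\to u$ a.s.\ uniformly on $[0,T]\times\T$. Setting $\delta^{(n)}:=u^{(n+1)}-u^{(n)}$, for $n\ge 1$
\[
\delta^{(n)}(t,x)=\int_0^t\!\!\int_\T G(t-s,x-y)\bigl[\sigma(s,y,u^{(n)})-\sigma(s,y,u^{(n-1)})\bigr]W(ds,dy),
\]
with integrand bounded in absolute value by $\du\,|\delta^{(n-1)}(s,y)|$. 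The plan is to combine BDG with the Kolmogorov-type chaining estimates for stochastic heat convolutions developed in the proof of Theorem~\ref{thm2}, to obtain an iterative moment bound of the form
\[
\bigl\|\sup_{t\le T}\HO_t^{(\theta)}(\delta^{(n)})\bigr\|_{L^q}\;\le\; C\,\du\,\bigl\|\sup_{t\le T}\HO_t^{(\theta)}(\delta^{(n-1)})\bigr\|_{L^q},\qquad q\ge 2,
\]
with $C=C(\cl,\cu,\theta,q)$. Choosing $\du_0:=1/(2C)$ yields geometric decay of the differences, and summing together with a first-step bound on $\delta^{(0)}=u^{(1)}-v$ of order $\du$ gives a moment bound on $\sup_t\HO_t^{(\theta)}(w)$ that is $O(\du)$ times moments of appropriate norms of $v$.

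\textbf{Main obstacle.} The central technical difficulty is that $P(E_v)$ is \emph{exponentially} small in $\epsilon^{-3/\theta}$, so polynomial (Markov-type) tail bounds on $\HO^{(\theta)}(w)$ are insufficient to show that a positive fraction of $E_v$ is compatible with $\sup_t\HO_t^{(\theta)}(w)\le\epsilon/2$; what is needed is an exponential concentration inequality for $\sup_t\HO_t^{(\theta)}(w)$ with variance proxy of order $\du^2$. Two routes suggest themselves: (i) view $u-v$ as a nonlinear functional of the Gaussian white noise $W$ whose Malliavin derivative is $O(\du)$ and apply Gaussian isoperimetry (Borell's inequality together with a Cameron--Martin Lipschitz bound) to obtain sub-Gaussian tails with the required variance; or (ii) carry out the Gaussian comparison \emph{locally} on a fine time-space grid of mesh $\epsilon^{2/\theta}\times\epsilon^{1/\theta}$ and apply the Gaussian correlation inequality cell-by-cell as in Theorem~\ref{thm1}, with $\du$-smallness used to replace the exact nonlinear coefficient by a frozen one on each cell. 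In either route, the smallness of $\du$ is the essential mechanism by which the $\eta$-loss of Theorem~\ref{thm2} is avoided.
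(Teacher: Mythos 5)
Your upper bounds are fine (they are indeed inherited verbatim from Theorem \ref{thm2}), but the lower-bound argument has a genuine gap, and it is located exactly where you place your ``main obstacle''. With the comparison process $v$ defined globally on $[0,T]$ by freezing the coefficient at $u_0$, the difference $w=u-v$ is a stochastic convolution whose integrand is bounded by $\du\,|u(s,y)-u_0(y)|$; over a macroscopic time horizon this quantity is of order one, not of order $\epsilon^{\frac{1}{2\theta}}$, so $\sup_t\HO_t^{(\theta)}(w)$ has fluctuations of order $\du$ --- a constant independent of $\epsilon$. Consequently $P\bigl(\sup_t\HO_t^{(\theta)}(w)>\epsilon/2\bigr)$ tends to $1$ as $\epsilon\to0$ for any fixed $\du>0$, and no concentration inequality with variance proxy $\du^2$ (your route (i), Borell/Malliavin) can rescue the decomposition: the problem is the scale of $w$ relative to $\epsilon$, not the shape of its tails. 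The Picard step has the same flaw (and, incidentally, the claimed contraction compares the H\"older seminorm of $\delta^{(n)}$ to the H\"older seminorm of $\delta^{(n-1)}$, whereas the Lipschitz bound only yields control by the sup norm of $\delta^{(n-1)}$), so even a fully justified iteration would only give $\sup_t\HO_t^{(\theta)}(w)=O(\du)$, which is useless against the target threshold $\epsilon/2$.

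What makes the argument work in the paper is precisely the localization you only sketch in route (ii), plus an ingredient you omit: amplitude control. The paper runs the perturbation argument of Section \ref{sec:lb:nonlinear} on time blocks of length $c_0\epsilon^{2/\theta}$ (the original discretization \eqref{eq:t:x}, not the $\eta$-shrunk one), freezes the coefficient at the profile at the start of each block, and builds into the per-block events $A_i=R_i\cap S_i$ the requirement that $\sup_x|u(t,x)|\lesssim\epsilon^{\frac{1}{2\theta}}$ on the block; together with a stopping time this forces the coefficient difference to be at most $2\du\,\epsilon^{\frac{1}{2\theta}}$, so (via Remarks \ref{rem:N:tail} and \ref{rem:Nt:tail}) the perturbation fails the sup-norm or H\"older constraints with probability only $\exp\bigl(-c/(\du^2\epsilon^{1/\theta})\bigr)$ per block. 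The smallness of $\du$ is then used to compare exponents: this error is negligible against the per-block Gaussian lower bound $\exp\bigl(-C(\cl,\cu,c_0)/\epsilon^{1/\theta}\bigr)$ from Lemma \ref{lem:B0}, giving $P(A_i)\ge\tfrac12 P(B_0^{(g)})$, and the Markov property over the $T\epsilon^{-2/\theta}$ blocks produces $\exp(-CT/\epsilon^{3/\theta})$. Note also that independence across blocks comes from the Markov property, not from a cell-by-cell application of the Gaussian correlation inequality; GCI is used only inside a block for the frozen (Gaussian) process. To repair your proof you would need to abandon the global comparison, adopt this block structure, and add the sup-norm events --- without them the effective Lipschitz perturbation is $O(\du)$ rather than $O(\du\,\epsilon^{\frac{1}{2\theta}})$ and the exponent comparison that exploits $\du<\du_0$ never materializes.
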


We now say a few words about the proofs of our theorems. As mentioned above, Theorem \ref{thm1} relies heavily on the fact that when $\sigma$ is independent of $u$, the solution $u(t, x)$  is a Gaussian random field. The proof of the upper bound is essentially contained in the proof of Lemma \ref{lem:a1} which among other things, relies on the Gaussianity of $u$.  Another crucial element is the sharp bound given by Lemma \ref{lem:Nt:tail} whose proof uses some well known ideas presented in \cite{athr-jose-muel}.  It is also interesting to note that when $\sigma$ is a constant, one can further simplify the proof of the upper bound by resorting to Slepian's lemma; see Remark \ref{rem:32} for more details.  The lower bounds rely even more heavily on Gaussianity of the solution in that we use the Gaussian correlation inequality in an essential way. This is done in Lemma \ref{lem:N:Nt}. Another key ingredient is the use of a change of measure argument similar to \cite{athr-jose-muel}. Intuitively, this allows us to keep the solution small which gives us a better handle on the estimates required.

Under the conditions of Theorems \ref{thm2} and \ref{thm3}, the solutions are no longer Gaussian, so the corresponding proofs require different strategies. For the lower bounds, we use a  {\it perturbation} argument together with the proof of the lower bound in Theorem \ref{thm1}. We note that the sharper lower bound in Theorem \ref{thm3} is also a consequence of the very same perturbation argument.  

The proofs of the upper bounds in Theorems \ref{thm2} and \ref{thm3} are entirely different and make use of certain auxiliary random variables which have nice independence properties. These random variables are indexed by the spatial variables.  Their construction is inspired by \cite{conu-jose-khos}. 

In the final section of this paper, we present some extensions and prove a support theorem in the H\"older semi-norm.  It will be clear later that our paper raises several questions. One such open question is whether the bounds \eqref{eq:unif:t} and \eqref{eq:unif:x} continue to hold in the general case, that is for any $\sigma$ satisfying Assumptions \ref{assump1} and \ref{assump2}. We have assumed that $\sigma$ is bounded below and above by positive constants. Another avenue of investigation is to replace these assumptions by less stringent ones. Let us point that here, when $\theta=\frac12$ the above theorems match the results recently obtained in \cite{athr-jose-muel} for the small ball probabilities of the sup norm of $u$. 

We have studied the small ball probability estimates of $\sup_{t\in [0,T]} \HO_t^{(\theta)} (u)$ and $\sup_{x\in \T} \HS_x^{(\theta)} (u)$. We next consider the small ball probability estimates of $\HO_T^{(\theta)}(u)$ for a fixed time $T$, and $\HS_X^{(\theta)}(u)$ for a fixed spatial point $X$. We start with the Gaussian case.

\begin{thm} \label{thm4} Assume that the function $\sigma(t,x,u)$ is independent of $u$ and satisfies Assumption \ref{assump1}, and fix a time $T>0$. Let $0<\theta<\frac12$ and $0<\epsilon<1$.
\begin{enumerate}[(a)]
\item  Assume that the initial profile satisfies $\HO_0^{(\theta)}(u) \le \frac{\epsilon}{2}$. Then there exist positive constants $C_1, C_2,C_3,C_4 >0$ dependent only on $\cl,\cu,\theta, T$ such that %for all $\epsilon$ small enough
\begin{equation} \label{eq:t} C_1\exp\left(- \frac{C_2 }{\epsilon^{\frac1\theta}}\right) \le P \left( \HO_T^{(\theta)}\left(u\right) \le \epsilon \right)\le C_3\exp\left(- \frac{C_4}{\epsilon^{\frac 1\theta}}\right).
\end{equation}
\item Assume that the initial profile satisfies $\HO_0^{(\theta)}(u) \le \frac{\epsilon}{2\Lambda}$. Then there exist positive constants $C_1, C_2,C_3,C_4 >0$ dependent only on $\cl,\cu,\theta$ such that %for all $\epsilon$ small enough
\begin{equation} \label{eq:x} C_1\exp\left(- \frac{C_2 T }{\epsilon^{\frac2\theta}}\right) \le P \left( \HS_X^{(\theta)}\left(u\right) \le \epsilon \right)\le C_3\exp\left(- \frac{C_4 T}{\epsilon^{\frac 2\theta}}\right).
\end{equation}
\end{enumerate}
\end{thm}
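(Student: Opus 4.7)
The plan is to adapt the Gaussian-case arguments behind Theorem \ref{thm1} to the ``fixed slice'' setting. When $\sigma$ is independent of $u$, the field $u(T,\cdot)$ on $\T$ and the process $u(\cdot,X)$ on $[0,T]$ are Gaussian, with means given by the periodic heat semigroup applied to $u_0$. A scaling heuristic identifies the correct box size: a spatial increment of $u(T,\cdot)$ over distance $\delta$ has standard deviation $\asymp\sqrt\delta$, so the spatial H\"older quotient at exponent $\tfrac12-\theta$ has standard deviation $\asymp \delta^\theta$, forcing the critical spatial scale $\delta_*=\epsilon^{1/\theta}$ and $N_*\asymp\epsilon^{-1/\theta}$ boxes in $\T$. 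Similarly a temporal increment of duration $\tau$ has standard deviation $\asymp \tau^{1/4}$, giving critical time scale $\tau_*=\epsilon^{2/\theta}$ and $N_*\asymp T/\epsilon^{2/\theta}$ boxes in $[0,T]$. These box counts reproduce the exponents in \eqref{eq:t} and \eqref{eq:x}.

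For part (a), I would partition $\T$ into $N_*$ disjoint arcs $I_k$ of length $\delta_*$ and work with the localised H\"older quotients $M_k:=\sup_{x\ne y\in I_k}|u(T,x)-u(T,y)|/|x-y|^{\frac12-\theta}$. After subtracting the mean, whose spatial H\"older semi-norm is dominated by $\HO_0^{(\theta)}(u)\le\epsilon/2$ thanks to the standard heat-kernel smoothing bound $|P_T u_0(x)-P_Tu_0(y)|\le \HO_0^{(\theta)}(u)\,|x-y|^{\frac12-\theta}$, it suffices to analyse a centered Gaussian field on $\T$. The lower bound is then obtained by applying the Gaussian correlation inequality, exactly as in Lemma \ref{lem:N:Nt}, to decouple the events $\{M_k\le \epsilon\}$ into a product, together with a box-by-box small-ball estimate $P(M_k\le\epsilon)\ge c>0$ that follows from a standard Gaussian supremum small-ball bound on an interval of length $\delta_*$; multiplying over $N_*$ boxes yields the claimed $\exp(-C_2/\epsilon^{1/\theta})$. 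The upper bound follows the template of Lemma \ref{lem:a1} and Lemma \ref{lem:Nt:tail}: extract a distinguished Gaussian increment from each box, use the weak cross-correlation of these increments across separated arcs (and a Slepian comparison, as indicated in Remark \ref{rem:32}) to reduce to a near-product, and conclude by bounding each marginal by a constant $q<1$.

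Part (b) proceeds analogously with the roles of space and time exchanged: partition $[0,T]$ into $N_*$ intervals of length $\tau_*$, and apply GCI and Slepian-type comparisons to the Gaussian time-process $u(\cdot,X)$. The hypothesis $\HO_0^{(\theta)}(u)\le \epsilon/(2\Lambda)$ is precisely what ensures that the temporal H\"older semi-norm of the mean $t\mapsto (P_t u_0)(X)$ stays below $\epsilon/2$, since the semigroup identity $(P_{t+s}u_0)(X)-(P_t u_0)(X)=\int p(s,w)\bigl[(P_t u_0)(X-w)-(P_t u_0)(X)\bigr]\,dw$ together with the definition \eqref{eq:lambda} of $\Lambda$ produces exactly the factor $\Lambda\, s^{\frac14-\frac{\theta}2}$. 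The main obstacle in both parts is the upper bound: unlike the lower bound, where GCI cleanly handles decoupling, the upper bound requires a genuine (not merely approximate) sub-independence between the $M_k$, which must be set up via Slepian's lemma after a careful choice of representative Gaussian increments that are both weakly correlated across boxes and genuinely dominated by the $M_k$. A secondary but routine point is that the periodic boundary condition modifies the free heat kernel only by corrections that are exponentially small at scales $\ll 1$, so it does not affect any of the sharp exponents in \eqref{eq:t} or \eqref{eq:x}.
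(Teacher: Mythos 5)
Your upper-bound sketches are essentially the paper's argument: for (a) the paper conditions on the profile at time $T-c_0\epsilon^{2/\theta}$ and applies Lemma \ref{lem:a1} (whose initial profile is arbitrary), and for (b) it conditions along a time grid of spacing $\epsilon^{2/\theta}$ and uses the Markov property together with two-sided variance bounds on the noise increment; your ``distinguished increments with weak cross-correlation, each marginal bounded by $q<1$'' is exactly this mechanism, and it goes through.

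The genuine gap is in your lower bounds. The event $\bigcap_k\{M_k\le\epsilon\}$, where $M_k$ is the H\"older quotient localised to an arc $I_k$ of length $\epsilon^{1/\theta}$, is \emph{not} contained in $\{\HO_T^{(\theta)}(u)\le\epsilon\}$: it says nothing about increments $|u(T,x)-u(T,y)|$ for $x,y$ in different arcs, and chaining the local bounds only yields $|u(T,x)-u(T,y)|\le \epsilon\,|x-y|\,\epsilon^{-(1/2+\theta)/\theta}$, which is far weaker than $\epsilon|x-y|^{\frac12-\theta}$ once $|x-y|\gg\epsilon^{1/\theta}$. So GCI applied to the local events bounds the probability of the wrong event; the same objection applies verbatim to your temporal decomposition in (b). This is precisely why, in the time-uniform Theorem \ref{thm1}, the paper's blocks $B_i$ include the sup-norm events $U_i$ (smallness of order $\epsilon^{1/2\theta}$) to glue neighbouring blocks, and why Lemma \ref{lem:N:Nt} is a \emph{joint} small-ball estimate for $N$ and $\widetilde N$. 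At a fixed macroscopic time $T$ you would have to add analogous cross-box constraints (control of oscillation at every scale above $\epsilon^{1/\theta}$, the binding one being oscillation $\lesssim\epsilon^{1/2\theta}$ at scale $\epsilon^{1/\theta}$) and show their cost does not spoil the exponent; since the field $u(T,\cdot)$ has order-one variance, this cannot be extracted from the short-time lemmas and amounts to redoing a multiscale Gaussian small-ball argument. The paper avoids this entirely: its lower bounds for Theorem \ref{thm4} follow from (a mild modification of) Theorem 2.2 of \cite{kuel-li-shao}, using only the two-sided increment-variance bounds $\boldsymbol\sigma^2(\gamma)\asymp\gamma$ in space and $\asymp\sqrt{T\gamma}$ in time, with $f(x)=x^{\frac12-\theta}$, $\boldsymbol\beta=\theta$ (resp. $f(x)=x^{\frac14-\frac\theta2}$, $\boldsymbol\beta=\frac\theta2$). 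Incidentally, your closing assessment inverts the difficulty: in this fixed-slice Gaussian setting the upper bound is the routine part, and it is the lower bound where the real work (the Kuelbs--Li--Shao machinery, or an equivalent multiscale construction) is needed.
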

The upper bounds are in fact an immediate consequence of the proof of Theorem \ref{thm1}, and we will see that the constants $C_3$ and $C_4$ can be chosen independently of $T$. The  lower bounds follow from the arguments in the proof of  Theorem 2.2. of \cite{kuel-li-shao}. The proof of the lower bound above is specific to Gaussian processes and cannot be directly extended to the general case. 
\begin{thm} \label{thm5} Suppose that $\sigma(t,x,u)$ satisfies both Assumptions \ref{assump1} and \ref{assump2}, and fix a time $T>0$. Let $0<\theta<\frac12$ and $0<\epsilon<1$. 
\begin{enumerate}[(a)]
\item  Assume that the initial profile satisfies $\HO_0^{(\theta)}(u) \le \frac{\epsilon}{2}$. Then there exist positive constants $C_1,C_2>0$ dependent on $\cl, \cu,\theta, T$ such that %for all $\epsilon$ small enough
\begin{equation} \label{eq:t2} 
P \left(\HO_T^{(\theta)}\left(u\right) \le \epsilon \right)\le C_1\exp\left(- \frac{C_2}{\epsilon^{\frac1\theta}|\log \epsilon|^{\frac32}}\right).
\end{equation}
\item Assume that the initial profile satisfies $\HO_0^{(\theta)}(u) \le \frac{\epsilon}{2\Lambda}$.  Then there exist positive constants $C_1,C_2>0$ dependent on $\cl, \cu,\theta$ such that %for all $\epsilon$ small enough
\begin{equation} \label{eq:x2} 
P \left(\HS_X^{(\theta)}\left(u\right) \le \epsilon \right)\le  C_1\exp\left(- \frac{C_2T}{\epsilon^{\frac{2}{\theta}}}\right).
\end{equation}

\end{enumerate}
\end{thm}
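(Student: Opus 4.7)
The plan for both parts is to adapt the localization technique with independent auxiliary random variables driving the upper-bound proofs of Theorems \ref{thm2} and \ref{thm3}, now specialized to a single time slice (part (a)) or spatial fiber (part (b)). For part (a), I partition $\T$ into $N_x:=\lfloor\epsilon^{-1/\theta}\rfloor$ sub-intervals of length $L:=\epsilon^{1/\theta}$ with endpoints $x_0<\dots<x_{N_x}$ and, to each consecutive pair $(x_j,x_{j+1})$, associate an auxiliary variable
\[
V_j := \int_{T-\tau}^{T}\int_{I_j^*}\bigl[G_{T-s}(x_{j+1},y)-G_{T-s}(x_j,y)\bigr]\,\sigma(s,y,u(s,y))\,W(ds,dy),
\]
where $G_t$ is the periodic heat kernel and $\tau,I_j^*$ are chosen (as in Theorem \ref{thm2}) so that the $V_j$'s depend on disjoint portions of the noise and are hence independent. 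A scaling computation using Assumption \ref{assump1} gives $\Var(V_j)\asymp L$, which matches the threshold $\epsilon L^{1/2-\theta}=\epsilon^{1/(2\theta)}$ at the standard-deviation scale. On the event $\{\HO_T^{(\theta)}(u)\le\epsilon\}$ the increment $|u(T,x_{j+1})-u(T,x_j)|$ is at most $\epsilon L^{1/2-\theta}$, and after controlling the non-local remainder $R_j:=u(T,x_{j+1})-u(T,x_j)-V_j-(\text{initial difference})$ by Borell-type Gaussian tail estimates at the cost of a $|\log\epsilon|^{3/2}$ factor, each $V_j$ is trapped in an interval of length $\asymp\epsilon^{1/(2\theta)}|\log\epsilon|^{3/2}$. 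A Girsanov/ellipticity argument then gives a conditional Gaussian density bound of the form $P(V_j\in\text{this interval}\mid\mathcal G)\le 1-c/|\log\epsilon|^{3/2}$, uniformly in the outside-noise $\sigma$-algebra $\mathcal G$, and multiplying over the $N_x$ independent events yields $(1-c|\log\epsilon|^{-3/2})^{N_x}\le\exp(-C_2/(\epsilon^{1/\theta}|\log\epsilon|^{3/2}))$.

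For part (b), I dualize in time: partition $[0,T]$ into $N_t:=\lfloor T\epsilon^{-2/\theta}\rfloor$ sub-intervals $[t_i,t_{i+1}]$ of length $\epsilon^{2/\theta}$ and set
\[
V_i := \int_{t_i}^{t_{i+1}}\int_\T G_{t_{i+1}-s}(X,y)\,\sigma(s,y,u(s,y))\,W(ds,dy).
\]
The mild formulation gives $u(t_{i+1},X)-u(t_i,X)=V_i+D_i$ with $D_i:=\int_\T G_{\epsilon^{2/\theta}}(X,y)\,[u(t_i,y)-u(t_i,X)]\,dy$ being $\F_{t_i}$-measurable. Conditionally on $\F_{t_i}$, $V_i$ has variance $\asymp\epsilon^{1/\theta}$ by Assumption \ref{assump1}, so a density estimate (either Malliavin-based or a Girsanov comparison against additive noise) gives $P(|V_i+D_i|\le\epsilon^{1/(2\theta)}\mid\F_{t_i})\le p<1$ with $p=p(\cl,\cu,\theta)$. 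On $\{\HS_X^{(\theta)}(u)\le\epsilon\}$ we have $|V_i+D_i|\le\epsilon^{1/(2\theta)}$ for all $i$, so iterating the tower property along the filtration $\F_{t_0}\subset\F_{t_1}\subset\dots$ yields $p^{N_t}\le\exp(-C_2 T/\epsilon^{2/\theta})$. The absence of the $|\log\epsilon|^{3/2}$ factor here (in contrast to part (a) and to Theorem \ref{thm2}) reflects the fact that the $V_i$'s are honestly independent through the disjointness of the time strips of white noise, with no need for a union bound over $x$ or multi-scale remainder analysis.

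The main obstacle in part (b) will be establishing the uniform-in-$D_i$ conditional small-ball bound on $V_i$: since $\sigma$ depends on $u$, $V_i\mid\F_{t_i}$ is not Gaussian, so one must either invoke a Malliavin density estimate for the stochastic integral or perform a Girsanov comparison against the additive-noise benchmark, using Assumption \ref{assump2} to keep the Radon--Nikodym derivative under control. For part (a), the parallel difficulty is the multi-scale control of $R_j$ ensuring that the Borell loss is no worse than $|\log\epsilon|^{3/2}$, which mirrors the analogous step in the proof of Theorem \ref{thm2}(a).
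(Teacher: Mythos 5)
Your part (a) does not close quantitatively. You keep all $N_x\asymp\epsilon^{-1/\theta}$ adjacent blocks and propose to pay the logarithmic loss per block: trapping each $V_j$ (standard deviation $\asymp\epsilon^{1/(2\theta)}$) in an interval of length $\asymp\epsilon^{1/(2\theta)}|\log\epsilon|^{3/2}$, i.e.\ an interval of $|\log\epsilon|^{3/2}$ standard deviations. For any variable with Gaussian-type concentration the probability of staying inside such an interval is $1-O(e^{-c|\log\epsilon|^{3}})$, not $1-c|\log\epsilon|^{-3/2}$, and $(1-e^{-c|\log\epsilon|^{3}})^{\epsilon^{-1/\theta}}=\exp\left(-c\,\epsilon^{-1/\theta}e^{-c|\log\epsilon|^{3}}\right)\to 1$, so the product yields no decay at all; the claimed per-block bound $1-c/|\log\epsilon|^{3/2}$ simply does not follow from the trapping step. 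Moreover the geometry you want is not attainable for multiplicative noise: to make the truncation error $R_j$ negligible at scale $\epsilon^{1/(2\theta)}$ with failure probability smaller than $\exp(-\epsilon^{-1/\theta}|\log\epsilon|^{-3/2})$ one is forced, as in the proof of Theorem \ref{thm2}(a), to run $l\asymp|\log\epsilon|$ Picard iterations each spreading the noise dependence by $\sqrt{\beta t}\asymp\sqrt{|\log\epsilon|}\,\epsilon^{1/\theta}$, so independent blocks must be spaced $\asymp|\log\epsilon|^{3/2}\epsilon^{1/\theta}$ apart; the paper pays the $|\log\epsilon|^{3/2}$ there, keeping only $\asymp\epsilon^{-1/\theta}|\log\epsilon|^{-3/2}$ independent pairs, each contributing a fixed factor $\eta<1$. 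Your ``Girsanov/ellipticity'' conditional density bound for the non-Gaussian $V_j$ is also unsubstantiated; the paper never needs a density for the localized variables --- they serve only to create independence, while anti-concentration for the true increments comes from the martingale time-change and coupling bound \eqref{eq:diff-u-x}. The paper's actual proof of (a) is then short: condition on the profile at time $T-c_0\epsilon^{2/\theta}$, invoke the one-step estimate \eqref{eq:prob-main}, which is uniform over initial profiles, and integrate out the conditioning.

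Your part (b) skeleton (steps of length $\epsilon^{2/\theta}$, the decomposition $u(t_{i+1},X)-u(t_i,X)=V_i+D_i$ with $D_i$ $\F_{t_i}$-measurable, tower property over $i$) is exactly the paper's route, which reduces to the upper-bound argument of Theorem \ref{thm4}(b); but the one step you leave open --- the uniform conditional bound $P(|V_i+D_i|\le\epsilon^{1/(2\theta)}\mid\F_{t_i})\le p<1$ --- is the entire content, and the two routes you suggest are doubtful: Girsanov changes the drift, not the diffusion coefficient, so it cannot compare multiplicative noise with an additive-noise benchmark, and a Malliavin density estimate is not available under Assumptions \ref{assump1}--\ref{assump2} alone and is in any case unnecessary. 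The paper's resolution is elementary: $s\mapsto\mathcal N(t_i,s,X)$ is a continuous martingale whose terminal quadratic variation lies between constant multiples of $\cl^2\epsilon^{1/\theta}$ and $\cu^2\epsilon^{1/\theta}$ by \eqref{eq:var:t_incr}, so it is a time-changed Brownian motion, and the coupling argument of \eqref{eq:diff-u-x} gives the bound uniformly in the $\F_{t_i}$-measurable shift $D_i$. (Note also that the $V_i$ are not ``honestly independent'': $\sigma(s,y,u(s,y))$ for $s\ge t_i$ depends on the noise before $t_i$; but the tower property only needs the uniform conditional bound.) With that lemma supplied, your part (b) is correct and coincides with the paper's proof; as written, it is a gap.
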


A trivial lower bound is obtained from either Theorem \ref{thm2} or Theorem \ref{thm3} depending on whether $\sigma$ satisfies the assumptions of Theorem \ref{thm2} or Theorem \ref{thm3}. However this is very far from the lower bound obtained in Theorem \ref{thm4}.

\begin{rem} As the reader observes the bounds in Theorems \ref{thm:main},  \ref{thm2}, \ref{thm3} and \ref{thm5} are close to optimal but not sharp. While optimal results can be obtained in the Gaussian case (i.e. when $\sigma$ does not depend on $u$) using Gaussian-specific techniques, a perturbation argument is the main tool for the lower bounds in the non-Gaussian case. When the perturbation (as measured by the Lipschitz constant $\du$) is small, one can get similar lower bounds as in the Gaussian case (see Theorem \ref{thm3}). However, the perturbation argument works only when the time interval under consideration is small and therefore does not work for Theorem \ref{thm5}. Moreover, a similar perturbation argument cannot be implemented for the upper bounds as we don't have good control of the tail probabilities $\left\{|u(t,x)-u(s,x)|>\epsilon\right\}$ when $|t-s|=O(\epsilon^{\frac{2}{\theta}})$ (see Section \ref{sec:lb:nonlinear} for the perturbation argument for lower bounds). We thus have to resort to a general non-Gaussian argument which only gives us close to optimal upper bounds.
\end{rem}

\begin{rem}The dependence of the constants in the theorems on the parameters $\cl,\cu,\mathscr D
, \theta$ etc. is in general quite complicated. See for example Remark \ref{rem:a1:ubd}. We have tried to indicate the dependence on the parameters wherever we could.
\end{rem}

\begin{rem} Note that $v(t,x) = u(\rho t, x)$ satisfies 
\[ \partial_t v(t,x)
 = \frac{\rho}{2} \partial_x^2 v(t,x)
  +\tilde
   \sigma( t,x, v(t,x))\dot{ \tilde W}
   (t,x)
\]
for some other white noise
 $\dot{\tilde W}
 $, and $\tilde \sigma(t,x,v) :=
 \rho^{\frac12} \sigma(\rho t,x, v)$. The function $\tilde \sigma$ satisfies $\rho^{\frac12}\cl \le \tilde \sigma(t,x,v) \le \rho^{\frac12}\cu$ and $|\tilde \sigma(t,x,u) -\tilde \sigma(t,x,v)|\le \rho^{\frac12}\mathscr D
 |u-v|$. Thus one can obtain similar results with the inclusion of a diffusion parameter $\rho$ by converting it to the form \eqref{eq:she}.
\end{rem}

\begin{rem} Note that it is sufficient to prove the above theorems for all sufficiently small $\epsilon <\epsilon_0$, where $\epsilon_0$ is dependent on $\cl, \cu, \theta$ (and maybe additionally on $\eta$ in the case of Theorem \ref{thm2} and $T$ in the case of Theorems \ref{thm4} (a) and \ref{thm5} (a)). The conclusion for any $0<\epsilon<1$ follows from the fact that the probabilities $P \left(\HO_T^{(\theta)}\left(u\right) \le \epsilon \right)$, $P \left(\HS_X^{(\theta)}\left(u\right) \le \epsilon \right)$, $P \left(\sup_{0\le t\le T} \HO_t^{(\theta)}\left(u\right) \le \epsilon \right)$ and $P \left(\sup_{x\in \T} \HS_x^{(\theta)}\left(u\right) \le \epsilon \right)$ are nondecreasing in $\epsilon$. 
\end{rem}

The following table highlights the main differences between the main theorems and their extensions.
\begin{table}[ht]
\caption{Summary of results}
\centering 
\begin{tabular}{|l|l|p{6cm}|}
\hline
\textbf{Main Theorems:} & \textbf{Conditions on $\sigma$:} & \textbf{Types of small ball probabilities (SBP):}\\
\hline
Theorem 1.1& Dependent on $u$ &upper and lower bounds on space-time H\"older seminorms\\
\hline 
Theorem 1.2& Independent of $u$ &matching upper and lower bounds on SBP of  $\sup_{0\le t\le T}\HO_t^{(\theta)}$ and $\sup_{x\in \T}\HS_x^{(\theta)}$\\
\hline
Theorem 1.3& Dependent on $u$ &upper and lower bounds on SBP of $\sup_{0\le t\le T}\HO_t^{(\theta)}$ and $\sup_{x\in \T}\HS_x^{(\theta)}$\\
\hline
Theorem 1.4& Dependent on $u$ with small $\mathscr D$& improved lower bounds on SBP of $\sup_{0\le t\le T}\HO_t^{(\theta)}$ as compared to Theorem 1.3. Same upper bound as Theorem 1.3\\
\hline
Theorem 1.5&Independent of u & matching upper and lower bounds on SBP of  $\HO_T^{(\theta)}$ and $\HS_X^{(\theta)}$\\
\hline
Theorem 1.6&Dependent on u & Upper bound on  SBP of  $\HO_T^{(\theta)}$ and $\HS_X^{(\theta)}$\\
\hline
\textbf{Extensions:}& \textbf{Extra conditions on the equation:} & \textbf{Results:}\\
\hline
Theorem 8.1 & Presence of nice drifts& Theorems 1.1-1.4 hold\\
\hline
Theorem 8.2 &Presence of H\"older continuous drift & Bounds on SBP of $\sup_{0\le t\le T}\HO_t^{(\theta)}$\\
\hline
Theorem 8.3 &Presence of H\"older continuous drift & Bounds on SBP of $\sup_{x\in \T}\HS_x^{(\theta)}$\\
\hline
\end{tabular}
\end{table}

\begin{rem}\label{openq}
In this paper, we have studied bounds on various small ball probabilities. A related question would be to study the small ball constant, that is to find  the constant $C$ in the following 
\begin{equation*}
\lim_{\epsilon\rightarrow 0+}\epsilon^{\frac 3\theta}\ln P \left(\sup_{0\le t\le T} \HO_t^{(\theta)}\left(u\right) \le \epsilon \right)=-CT,
\end{equation*}
under appropriate conditions on $\sigma$. These types of questions are challenging and are beyond the scope of this current paper. Using the techniques in \cite{LiSha98}, we are confident that one can successfully tackle the small ball constant problem in some cases, but we leave this for future work. 
\end{rem}

{\bf Plan:} Section \ref{sec:pre} contains some preliminary estimates. The proofs of the upper bounds in Theorems \ref{thm1}, \ref{thm2} are given in Section \ref{sec:ubd}, while the correponding lower bounds are given in Section \ref{sec:lbd}. The proof of Theorem \ref{thm3} is given in Section \ref{sec:thm3}. After this, we give the proof of Theorem \ref{thm:main} in Section 6. The proofs of Theorems \ref{thm4} and \ref{thm5} are presented in Section \ref{sec:thm45}. Finally in Section \ref{sec:ext}, we give some extensions and prove a support theorem as a corollary of our results.

{\bf Notation:} Throughout this paper, $C$ with or without subscripts will denote positive constants whose value might change from line to line. Unless mentioned otherwise they will be independent of the parameters $\epsilon, \cl, \cu, \du$ etc. We will sometimes emphasize that  the dependence of the constants on specific parameters will be denoted by specifying the parameters in brackets, e.g. $C(\delta)$. For a random variable $X$ we denote $\|X\|_p:=E[|X|^p]^{1/p}$.

\section{Preliminaries}\label{sec:pre}
We define  the heat kernel $G(t,x)$ as  the fundamental solution of the heat equation on the torus $\T$
\bes
\begin{split}
\partial_t G(t,x) &=\frac12  \partial_x^2 G(t,x), \\
G(0,x) &= \delta_0(x).
\end{split}
\ees
Let 
\be
\label{eq:gaussian}
p(t,x) = (2\pi t)^{-1/2} \exp\left(-\frac{x^2}{2t}\right)
\ee
be the fundamental solution of the heat equation on $\R$. It is known that the heat kernel on $\T$ is given explicitly by 
\be \label{eq:G:p} G(t,x) = \sum_{k\in \Z} p(t,x+k).\ee%= \sum_{k \in \Z}  (2\pi t)^{-1/2}  \exp \left(\frac{-(x+k)^2}{2t}\right).\]
%where 
%\[ p(t,x) = (2\pi t)^{-1/2} \exp\left(-\frac{x^2}{2t}\right)\]
%is the fundamental solution of the heat equation on $\R$. 
We interpret the solution to \eqref{eq:she} in the sense of Walsh (\cite{wals}) as a random field which satisfies 
\be  \label{eq:mild} u(t,x) = \big(G_t*u_0\big)(x) + N(t,x),  \quad \text{a.s.} \ee
for each $t$ and $x$, where the first term on the right is the space convolution of the heat kernel with the initial profile $u_0(x)$, i.e., 
\[ \big(G_t*u_0\big)(x) = \int_{\T} G(t, x-y)\cdot u_0(y)\, dy, \]
and the second term which we call the {\it noise term} is the space-time convolution of the heat kernel with the product of $\sigma\left(s, y, u(s,y)\right)$ and white noise:
\be \label{eq:N} N(t,x)=  \int_0^t \int_{\T} G(t-s, x-y)\cdot \sigma\big(s, y, u(s,y)\big) W(ds dy).\ee
We are working on the torus $\T:=\R/\Z$, so in the above two expressions $x-y$ should be interpreted as the unique point $z$ in $[0,1)$ such that $x-y=z+k$ for some $k\in \Z$.

We now show that it is enough to prove our main results under the assumption that $u_0\equiv 0$. For a function $g: \T\to \R$ dependent only on the spatial variable $x$,  define 
\begin{equation}\label{H-norm} \HO^{(\theta)} (g) :=\sup_{x\ne y\in \T} \frac{|g(x)-g(y)|}{|x-y|^{\frac12-\theta}}.
\end{equation}
(Note the absence of subscript $t$ in $\HO^{(\theta)}$). The first lemma is a simple observation about the spatial H\"older regularity of $G_t*u_0$.
\begin{lem} \label{lem:hold:det} If for some $a>0$ one has $\mathcal H^{(\theta)}(u_0)\le a$ then $\HO^{(\theta)}\big(G_t*u_0\big) \le a$ for all $t>0$.
\end{lem}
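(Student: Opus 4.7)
The plan is to lift the torus convolution $G_t*u_0$ to a convolution on $\R$ with the periodic extension of $u_0$, and then exploit the fact that $p(t,\cdot)$ is a probability density to average the H\"older bound of $u_0$ against itself. Concretely, using the representation $G(t,x) = \sum_{k\in\Z} p(t,x+k)$ from \eqref{eq:G:p}, I would write
\[
(G_t*u_0)(x) \;=\; \int_{\T} \sum_{k\in\Z} p(t,x-y+k)\,u_0(y)\,dy \;=\; \int_{\R} p(t,x-y)\,\tilde u_0(y)\,dy,
\]
where $\tilde u_0:\R\to\R$ is the $1$-periodic extension of $u_0$. After the substitution $z=x-y$, this becomes $\int_{\R} p(t,z)\,\tilde u_0(x-z)\,dz$.

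Next, for $x,y\in\T$, I would form the difference
\[
(G_t*u_0)(x)-(G_t*u_0)(y) \;=\; \int_{\R} p(t,z)\,\bigl[\tilde u_0(x-z)-\tilde u_0(y-z)\bigr]\,dz,
\]
and observe that the torus distance satisfies $|(x-z)-(y-z)|_{\T}=|x-y|_{\T}$ independently of $z$. Since $\tilde u_0$ inherits the H\"older bound of $u_0$ on the torus, one has $|\tilde u_0(x-z)-\tilde u_0(y-z)|\le a\,|x-y|^{\frac12-\theta}$ for every $z\in\R$.

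Plugging this pointwise bound into the integral and using that $\int_{\R} p(t,z)\,dz=1$ (i.e. $p(t,\cdot)$ is a probability density on $\R$) yields
\[
\bigl|(G_t*u_0)(x)-(G_t*u_0)(y)\bigr| \;\le\; a\,|x-y|^{\frac12-\theta},
\]
and dividing by $|x-y|^{\frac12-\theta}$ and taking the supremum over $x\ne y$ gives $\HO^{(\theta)}(G_t*u_0)\le a$. There is no genuine obstacle here; the only subtlety to handle with care is the switch from a torus integral to an $\R$-integral via periodic extension, which is why I would state the identity $\int_{\T} G(t,x-y)f(y)\,dy = \int_{\R} p(t,x-y)\tilde f(y)\,dy$ explicitly before taking differences.
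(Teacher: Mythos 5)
Your proposal is correct and follows essentially the same route as the paper's proof: periodize $u_0$, rewrite the torus convolution as $\int_{\R} p(t,w)\,\tilde u_0(\cdot-w)\,dw$, apply the H\"older bound to the difference of translates, and use that $p(t,\cdot)$ integrates to $1$. No gaps.
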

\begin{proof}Let $\tilde u_0:\R\to \R$ be the periodization of $u_0$, that is $\tilde u_0(x+k)=u_0(x)$ for all $k \in \Z$ and $x\in \T$. We have
\begin{equation*}
\begin{split}
& \left|\big(G_t*u_0\big)(x) -\big(G_t*u_0\big)(y)\right| \\
& = \left|\sum_{k \in \Z} \int_{\T} \Big[ p(t, x-z+k) -p(t, y-z+k)\Big]\cdot u_0(z)\, dz\right| \\
&= \left|\sum_{k \in \Z} \int_{\T} \Big[p(t, x-z+k) -p(t, y-z+k)\Big] \cdot \tilde u_0(z)\, dz\right| \\
&= \left| \sum_{k \in \Z} \int_{-k}^{-k+1} \Big[p(t,x-w) -p(t, y-w)\Big]\cdot \tilde u_0(w+k)\, dw\right| \\
%&= \left|  \int_{\R} \big(p_t(x-w) -p_t(y-w)\big) \cdot \tilde u_0(w)\, dw\right| \\
&= \left|\int_{\R} p(t, w) \cdot \Big[\tilde u_0(x-w)-\tilde u_0(y-w)\Big]dw\right|
\end{split} 
\end{equation*}
We have $\left|\tilde u_0(x-w)-\tilde u_0(y-w) \right| \le a |x-y|^{\frac12-\theta} $ by assumption and $p(t,\cdot)$ integrates to $1$, therefore the result follows.
\end{proof}
We now prove a similar result for the temporal H\"older regularity of $(G_{\cdot} *u_0)(x)$. Recall the constant $\Lambda$ introduced in \eqref{eq:lambda}.
\begin{lem} \label{lem:hold:det:t}  If $\HO^{(\theta)}(u_0)\le a$ then
\[ \frac{\left |\left(G_t*u_0\right)(x) -\left(G_s*u_0\right)(x)\right|}{|t-s|^{\frac14-\frac\theta 2}} \le  \Lambda a\]
\end{lem}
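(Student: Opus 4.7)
The plan is to reduce the temporal increment of $G_t*u_0$ to a spatial Hölder increment of the (periodized) initial condition, using the semigroup property of the Gaussian kernel on $\R$.

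First I will recall from the calculation in the proof of Lemma \ref{lem:hold:det} that, with $\tilde u_0$ denoting the $1$-periodic extension of $u_0$,
\[ \big(G_r * u_0\big)(x) \;=\; \int_\R p(r, w)\, \tilde u_0(x - w)\, dw \qquad \text{for every } r > 0. \]
Because $\HO^{(\theta)}(u_0) \le a$ on $\T$ (interpreted with the torus distance) and the torus distance is bounded by the Euclidean distance on $\R$, the periodization satisfies $|\tilde u_0(y) - \tilde u_0(z)| \le a\,|y-z|^{\frac12-\theta}$ for all $y, z \in \R$.

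Next, assuming WLOG that $t > s$, I use the Gaussian semigroup identity $p(t,\cdot) = p(t-s,\cdot) * p(s,\cdot)$ on $\R$ together with Fubini to write
\[ \big(G_t * u_0\big)(x) \;=\; \int_\R p(t-s, v) \int_\R p(s, w)\, \tilde u_0(x - v - w)\, dw\, dv. \]
Since $\int_\R p(t-s, v)\, dv = 1$, subtracting the corresponding expression for $\big(G_s * u_0\big)(x)$ gives
\[ \big(G_t * u_0\big)(x) - \big(G_s * u_0\big)(x) \;=\; \int_\R p(t-s, v) \int_\R p(s, w)\,\bigl[\tilde u_0(x - v - w) - \tilde u_0(x - w)\bigr]\, dw\, dv. \]

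Applying the Hölder bound on $\tilde u_0$ inside the double integral yields
\[ \left|\big(G_t * u_0\big)(x) - \big(G_s * u_0\big)(x)\right| \;\le\; a \int_\R p(t-s, v)\, |v|^{\frac12-\theta}\, dv, \]
after using $\int_\R p(s,w)\,dw = 1$. Finally, I apply the Gaussian scaling $v = (t-s)^{1/2} w$, under which $p(t-s, v)\, dv = p(1, w)\, dw$ and $|v|^{\frac12-\theta} = (t-s)^{\frac14-\frac\theta 2}|w|^{\frac12-\theta}$, so the last integral equals $\Lambda\, (t-s)^{\frac14-\frac\theta 2}$ by the definition \eqref{eq:lambda} of $\Lambda$. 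This is exactly the claimed bound. There is no real obstacle here; the only small care needed is in verifying that Hölder regularity on $\T$ transfers to $\tilde u_0$ on $\R$ (which is immediate since the torus distance dominates no larger than the Euclidean one), and applying the semigroup identity rather than working with the torus heat kernel directly.
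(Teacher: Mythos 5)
Your proof is correct and follows essentially the same route as the paper: both rest on the semigroup property of the heat kernel, the reduction of the time increment to an averaged spatial increment of a H\"older function, and the Gaussian scaling that produces $\Lambda|t-s|^{\frac14-\frac\theta2}$. The only cosmetic difference is that the paper applies the H\"older bound to $g=G_s*u_0$ (via Lemma \ref{lem:hold:det}), while you unfold one more convolution and apply it directly to $\tilde u_0$.
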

\begin{proof} Without loss of generality assume that $s<t$. Let $g(x) = \left(G_s*u_0\right)(x)$ and $\tilde g$ be the periodization of $g$. Then by arguments similar to Lemma \ref{lem:hold:det} we have
\begin{align*}
\left| \left(G_t*u_0\right)(x) -\left(G_s*u_0\right)(x)\right| &= \left|\left(G_{t-s}*g\right)(x) - g(x)\right| \\
 &=  \int_{\R} p(t-s,w) \cdot \left| \tilde g(x-w)- \tilde g(x)\right| \\
 &\le a \int_{\R} p(t-s,w) |w|^{\frac12-\theta}\, dw \\
 &\le \Lambda a |t-s|^{\frac14-\frac\theta 2},
\end{align*}
by a simple change of variables. 
\end{proof}

Now consider the random field 
\begin{equation}\label{reduction}
v(t,x)= u(t,x)- (G_t*u_0)(x),
\end{equation}
where $u(t,x)$ solves \eqref{eq:she} with the initial profile $u_0$. One can easily check that 
\[ \partial_t v(t,x)= \frac12\partial_x^2 v(t,x) +\widetilde \sigma\big(t,x,v(t,x)\big)\cdot \dot{W}(t,x),\]
with initial profile $v_0\equiv 0$, where
\[ \widetilde \sigma(t,x,v) = \sigma\Big(t,x,v+(G_t*u_0)(x)\Big).\]
Furthermore $|\widetilde \sigma(t,x,v) -\widetilde\sigma(t,x,w)|\le \du |v-w|$ and $\widetilde \sigma$ is bounded below and above by $\cl, \cu$.

 Assume $\HO^{(\theta)}(u_0) \le \frac\epsilon2$. Then from Lemma \ref{lem:hold:det} and \eqref{reduction}, we have the following implications:
\bes
\begin{split}
\sup_{0\le t\le T} \HO^{(\theta)}_t(v) \le \frac\epsilon2 \qquad &\text{implies}\qquad  \sup_{0\le t\le T} \HO^{(\theta)}_t(u) \le \epsilon,\\
\sup_{0\le t\le T} \HO^{(\theta)}_t(u) \le \frac\epsilon2 \qquad &\text{implies}\qquad  \sup_{0\le t\le T} \HO^{(\theta)}_t(v) \le \epsilon.
\end{split}
\ees
Similar implications hold when we just consider $\HO_T^{(\theta)}(u)$ and $\HO_T^{(\theta)}(v)$ (without the supremum in $t$).

Similary if  $\HO^{(\theta)}(u_0) \le \frac{\epsilon}{2\Lambda}$, then from Lemma  \ref{lem:hold:det:t},
\bes
\begin{split}
\sup_{x\in \T} \HS^{(\theta)}_x(v) \le \frac\epsilon2 \qquad &\text{implies}\qquad  \sup_{x\in \T} \HS^{(\theta)}_x(u) \le \epsilon,\\
\sup_{x\in \T} \HS^{(\theta)}_x(u) \le \frac\epsilon2 \qquad &\text{implies}\qquad  \sup_{x\in \T} \HS^{(\theta)}_x(v) \le \epsilon.
\end{split}
\ees
Similar implications hold when we just consider $\HS_X^{(\theta)}(u)$ and $\HS_X^{(\theta)}(v)$ (without the supremum in $x$).

\begin{rem} \label{rem:reduction} {\bf (Important)} From the above discussion we observe that it is sufficient to prove the main theorems stated in the introduction with $u_0\equiv 0$. This is of course not surprising since the Laplacian is known to have smoothing effects. The above argument is merely a weak manifestation of this. {\bf We will assume that the initial profile $u_0\equiv 0$ for the rest of the article.}
\end{rem}

We will need the following lemmas which were proved in \cite{athr-jose-muel}. 
\begin{lem}[Lemma 3.3 in \cite{athr-jose-muel}] \label{lem:N:holder} There exist constants $C_1, C_2$ such that for all time points $0<s<t<1$, spatial points $x, y\in \T$, and $\lambda>0$, 
\begin{align*}
P\left( \big|N(t,x)- N(t,y)\big|>\lambda\right)& \le C_1\exp\left(- \frac{C_2\lambda^2}{\cu^2|x-y|}\right) \\
P\left( \big|N(t,x)- N(s,x)\big|>\lambda\right)& \le C_1\exp\left(- \frac{C_2\lambda^2}{\cu^2|t-s|^{1/2}}\right).
\end{align*}
\end{lem}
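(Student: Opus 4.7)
The plan is to treat each increment of $N$ as a white-noise stochastic integral with a bounded integrand, and then combine a sub-Gaussian concentration bound for martingales with sharp heat-kernel estimates on the torus. Writing
\begin{equation*}
N(t,x) - N(t,y) = \int_0^t \int_\T \bigl[G(t-r, x-z) - G(t-r, y-z)\bigr]\,\sigma\bigl(r,z,u(r,z)\bigr)\, W(dr\, dz),
\end{equation*}
and similarly expressing $N(t,x) - N(s,x)$ (with $s<t$) as a single integral on $[0,t]\times\T$ whose deterministic factor is $G(t-r,x-z) - G(s-r,x-z)\mathbf{1}_{r\le s}$, the pointwise bound $|\sigma|\le \cu$ means that the non-Gaussian nature of $u$ never enters the variance calculation: only the deterministic $L^2$ cost of the heat-kernel difference matters.

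The first step is the sub-Gaussian tail bound. As a function of its upper time limit, each such integral is a continuous martingale whose quadratic variation is dominated by
\begin{equation*}
V := \cu^2 \int_0^t \int_\T |\Delta G|^2\, dz\, dr,
\end{equation*}
where $\Delta G$ denotes the appropriate heat-kernel difference above. Applying the Dubins--Schwarz time change (or, equivalently, the standard exponential supermartingale inequality for $\exp(\lambda M_t - \tfrac{\lambda^2}{2}\langle M\rangle_t)$) I would obtain
\begin{equation*}
P\bigl(|N(t,x) - N(t,y)| > \lambda\bigr) \le 2\exp\!\bigl(-\lambda^2/(2V)\bigr),
\end{equation*}
and analogously for the temporal increment. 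From here the claimed inequalities follow once $V$ is controlled by $C\cu^2|x-y|$ in the spatial case and by $C\cu^2|t-s|^{1/2}$ in the temporal case.

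The main technical obstacle is therefore the two $L^2$ heat-kernel estimates
\begin{equation*}
\int_0^t \int_\T |G(t-r,x-z) - G(t-r,y-z)|^2\, dz\, dr \le C|x-y|,
\end{equation*}
together with its $|t-s|^{1/2}$ temporal analogue. My approach would be to reduce these to the free-space kernel via the representation $G(t,x) = \sum_{k\in\Z} p(t,x+k)$ in \eqref{eq:G:p}. On the bounded time window $t<1$, terms with $k\neq 0$ contribute at most an exponentially small correction, so the leading behavior is that of the Gaussian kernel on $\R$. Those free-space integrals can be handled either by Plancherel (the Fourier transform of $p(t,\cdot)$ on $\R$ being $e^{-t\xi^2/2}$, so both quantities reduce to explicit one-dimensional integrals in $\xi$ and $r$ giving the sharp exponents $|x-y|^1$ and $|t-s|^{1/2}$) or via the semigroup identity $G_r*G_r=G_{2r}$ combined with direct Gaussian manipulations. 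Combined with the sub-Gaussian estimate above, this yields the stated exponential tail bounds with constants independent of $s,t,x,y$ and of the particular solution $u$.
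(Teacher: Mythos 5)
Your proof is correct and is essentially the argument behind the quoted result: the paper does not reprove Lemma \ref{lem:N:holder} but imports it from \cite{athr-jose-muel}, where the tail bounds are obtained exactly as you describe, by viewing each increment as a continuous martingale in the upper time limit, dominating its quadratic variation by $\cu^2$ times a deterministic heat-kernel $L^2$ integral (possible since $|\sigma|\le\cu$), and applying a sub-Gaussian exponential-martingale or time-change inequality. The kernel estimates you need are precisely those recorded in Lemma \ref{lem:heat-kernel} (for the temporal increment you need both \eqref{eq:var:t_incr} and \eqref{eq:var:t:g-g}, splitting the integral at $r=s$), so your reduction to the free-space kernel via \eqref{eq:G:p} and Plancherel is a sound, standard way to supply them.
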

\begin{lem}[Lemma 3.4 in \cite{athr-jose-muel}]\label{lem:eq:N:tail} There exist universal constants $\mathbf K_1, \, \mathbf K_2>0$ such that for all $\alpha, \lambda, \epsilon>0$, $\theta >0$, and for all $a \in [0, 1)$ with $a+\epsilon^{1/\theta}<1$ we have
\begin{align}\label{eq:N:tail}
P \left(\sup_{\stackrel{0\le t\le \alpha\epsilon^{\frac2\theta}}{x\in [a, a+\epsilon^{\frac1\theta}]}} |N(t,x)|>\lambda \epsilon^{\frac{1}{2\theta}}\right) \le    \frac{\mathbf K_1}{1\wedge\sqrt \alpha}\exp\left( - \mathbf K_2 \frac{\lambda^2}{\mathscr C_2^2\sqrt\alpha}\right).
\end{align}
\end{lem}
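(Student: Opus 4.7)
The plan is to rescale so that the space--time window becomes $[0,\alpha]\times[0,1]$, invoke the subgaussian increment bounds of Lemma \ref{lem:N:holder} in a Dudley-type chaining argument, and handle the prefactor $(1\wedge\sqrt\alpha)^{-1}$ by a union bound over spatial sub-intervals when $\alpha<1$.

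First I would set $\tilde N(t',x'):=\epsilon^{-1/(2\theta)}\,N\bigl(\epsilon^{2/\theta}t',\,a+\epsilon^{1/\theta}x'\bigr)$, so that the event of interest becomes $\bigl\{\sup_{(t',x')\in[0,\alpha]\times[0,1]}|\tilde N(t',x')|>\lambda\bigr\}$. Because $N(0,\cdot)\equiv 0$ we have $\tilde N(0,0)=0$, so the sup equals $\sup|\tilde N-\tilde N(0,0)|$. Lemma \ref{lem:N:holder} translates to
\[P\bigl(|\tilde N(t',x')-\tilde N(s',y')|>\mu\bigr)\le C_1\exp\!\left(-\frac{C_2\,\mu^2}{\mathscr{C}_2^{2}\bigl(|t'-s'|^{1/2}+|x'-y'|\bigr)}\right),\]
so $\tilde N$ has subgaussian increments in the parabolic pseudo-distance $d((t',x'),(s',y'))=(|t'-s'|^{1/2}+|x'-y'|)^{1/2}$; the standard bound $\int_0^t\!\int_\T G^2(t-s,z)\,dz\,ds\le C\sqrt{t}$ also gives $\mathrm{Var}(\tilde N(t',x'))\le C\mathscr{C}_2^{2}\sqrt{t'}$.

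For $\alpha\ge 1$, the parabolic $d$-diameter of $[0,\alpha]\times[0,1]$ is $\asymp\alpha^{1/4}$. A $d$-ball of radius $r$ covers a rectangle of sides $r^4$ (in $t'$) and $r^2$ (in $x'$), so standard covering-number estimates plus Dudley's entropy integral for the supremum of a subgaussian process produce a tail bound of the form $P(\sup|\tilde N|>\lambda)\le C_1\exp(-C_2\lambda^2/(\mathscr{C}_2^{2}\sqrt\alpha))$, valid once $\lambda$ exceeds a constant multiple of the expected sup and trivially otherwise after adjusting the constants; this yields the claim with prefactor $1=(1\wedge\sqrt\alpha)^{-1}$. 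For $\alpha<1$, I partition $[0,1]$ in the $x'$ variable into $\lceil\alpha^{-1/2}\rceil$ intervals of length $\sqrt\alpha$, making each sub-box $[0,\alpha]\times J$ parabolically balanced with $d$-diameter $\asymp\alpha^{1/4}$. The same chaining bound applies to every sub-box, and a union bound over the $\lceil\alpha^{-1/2}\rceil$ of them contributes the factor $\alpha^{-1/2}=(1\wedge\sqrt\alpha)^{-1}$.

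The main obstacle I anticipate is making the chaining argument yield genuinely universal constants $\mathbf K_1,\mathbf K_2$, independent of $\epsilon,\alpha,\theta$, and $\mathscr{C}_2$. The dependence on $\mathscr{C}_2$ is clean since it enters only as a scalar multiplier on the subgaussian parameter and factors out of the entropy integral; independence of $\epsilon$ and $\theta$ is built into the rescaling; $\alpha$-uniformity is handled by the two-regime argument above. The torus causes no additional difficulty because the hypothesis $a+\epsilon^{1/\theta}<1$ rules out wrap-around in the spatial window, and \eqref{eq:G:p} controls $G$ pointwise by the free Gaussian kernel.
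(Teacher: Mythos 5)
Your proposal is correct and is essentially the argument behind this lemma: the paper itself does not prove it but quotes it from Lemma 3.4 of \cite{athr-jose-muel} (with Remark \ref{rem:N:tail} covering general $a$), and both that proof and the paper's proof of the analogous Lemma \ref{lem:Nt:tail} proceed exactly as you do --- subgaussian increment bounds as in Lemma \ref{lem:N:holder}, a chaining bound at the parabolic scale (there via an explicit dyadic grid, in your write-up via Dudley's entropy integral, which amounts to the same estimate), and for $\alpha<1$ a union bound over spatial blocks of length $\sqrt{\alpha}\,\epsilon^{1/\theta}$ producing the $(1\wedge\sqrt{\alpha})^{-1}$ prefactor. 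The only caveat, inherited from the quoted statement rather than from your argument, is that the increment and variance bounds you invoke (Lemma \ref{lem:N:holder} and \eqref{eq:var:t_incr}) are stated for time separations of order at most one, which is the regime in which the lemma is actually applied.
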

\begin{rem} \label{rem:N:tail} Note that Lemma 3.4 in \cite{athr-jose-muel} provides \eqref{eq:N:tail} when $a=0$, however, one can follow exactly the same proof to get \eqref{eq:N:tail} for any $a\in (0,1)$ with $a+\epsilon^{1/\theta}<1$.  It was also pointed out in \cite[Remark 3.1]{athr-jose-muel} that if  $\big|\sigma\big(s, y, u(s,y)\big)\big|\le C_1\epsilon^{\frac{1}{2\theta}}$ then one can bound the right hand side of \eqref{eq:N:tail} by $\frac{\mathbf K_1}{1\wedge\sqrt \alpha}\exp\left( - \mathbf K_2 \frac{\lambda^2}{C_1^2\epsilon^{\frac1\theta}\sqrt\alpha}\right)$. 
\end{rem}

The analysis of the following function will play a crucial role in this paper. 
\begin{equation} \label{eq:N:til}
\begin{split}
 \widetilde N(t,x,y) &:=\int_0^t \int_{\T} \frac{G(t-r, x-z)- G(t-r,y-z) }{|x-y|^{\frac12-\theta}} \, \sigma\left(r, z, u(r,z)\right) \dot{ W}(dr dz)\\
 &= \frac{N(t,x)-N(t,y)}{|x-y|^{\frac12-\theta}}.
 \end{split}
 \end{equation}
 Although we have not made it explicit, the function $\widetilde N(t,x,y)$ clearly depends also on $\theta$. The following lemma is used several times in the paper. 
 
 \begin{lem} \label{lem:Nt:tail} Let $\theta \in (0, \frac12)$. There exist constants $\mathbf K_3, \mathbf K_4$ dependent only on $\theta$ such that for all $\alpha, \lambda, \epsilon>0$ and for all $a\in [0,1)$ with $a+\epsilon^{1/\theta}<1$,  we have 
 \be\label{eq:Nt:tail} 
 P\left(\sup_{\stackrel{0\le t\le \alpha \epsilon^{\frac2\theta}}{x,y\in [a, a+\epsilon^{\frac1\theta}],\, x\ne y }}\big |\widetilde N(t,x,y)\big| > \lambda\epsilon\right) \le \frac{\mathbf K_3}{1\wedge \sqrt \alpha} \exp \left(-\mathbf K_4\frac{\lambda^2}{\cu^2  \alpha^\theta} \right).
 \ee
 \end{lem}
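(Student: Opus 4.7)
The plan is to combine Lemma \ref{lem:N:holder} with a chaining argument over the three-parameter domain $D_{\alpha,\epsilon,a} := \{(t,x,y): t \in [0, \alpha\epsilon^{2/\theta}], x, y \in [a, a+\epsilon^{1/\theta}]\}$. First I establish the pointwise bound: applying Lemma \ref{lem:N:holder} to $|N(t,x)-N(t,y)|$ and dividing by $|x-y|^{\frac12-\theta}$ gives, for every $(t,x,y)$ with $x \ne y$,
\[
P\!\left(|\widetilde N(t,x,y)| > \mu\right) \le C_1 \exp\!\left(-\frac{C_2 \mu^2}{\cu^2 |x-y|^{2\theta}}\right),
\]
and since $|x-y| \le \epsilon^{1/\theta}$ forces $|x-y|^{2\theta} \le \epsilon^2$, the pointwise estimate at $\mu = \lambda\epsilon$ is already of the form $\exp(-C\lambda^2/\cu^2)$. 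The $\alpha^\theta$ factor and the $1 \wedge \sqrt\alpha$ prefactor must come from the supremum, i.e. from the chaining.

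Next I rescale: define $\bar N(r,\xi,\eta) := \epsilon^{-1}\, \widetilde N\!\left(\alpha\epsilon^{2/\theta} r,\, a+\epsilon^{1/\theta}\xi,\, a+\epsilon^{1/\theta}\eta\right)$ for $(r,\xi,\eta) \in [0,1]\times [0,1]^2$. Using the stochastic integral representation \eqref{eq:N:til} together with the Walsh/BDG isometry applied to the heat-kernel differences, one derives sub-Gaussian $L^p$ bounds on the increments of $\bar N$ in each of the three coordinates. The Brownian scaling of $G$ (via \eqref{eq:G:p} and \eqref{eq:gaussian}) makes the sub-Gaussian parameter of the purely temporal increment of $\bar N$ scale like $\cu\,\alpha^{\theta/2}\,|r-r'|^{1/4}$, while spatial increments contribute a sub-Gaussian parameter of order $\cu\,(|\xi-\xi'|^\theta + |\eta-\eta'|^\theta)$ independently of $\alpha$. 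Feeding these estimates into a multi-parameter chaining/Dudley entropy bound on the rescaled field $\bar N$ over $[0,1]^3$ yields
\[
P\!\left(\sup_{(r,\xi,\eta)}|\bar N(r,\xi,\eta)| > \lambda\right) \;\le\; \frac{\mathbf K_3}{1 \wedge \sqrt\alpha}\,\exp\!\left(-\frac{\mathbf K_4 \lambda^2}{\cu^2 \alpha^\theta}\right),
\]
which, undoing the rescaling, is the desired bound \eqref{eq:Nt:tail}. The prefactor $1/(1\wedge\sqrt\alpha)$ arises naturally, as in Lemma \ref{lem:eq:N:tail}, when $\alpha < 1$: the time window is then narrow, so the terminal slice $t = \alpha\epsilon^{2/\theta}$ gives a sharper tail and requires an initial concentration step (as in \cite{athr-jose-muel}) to inflate the pointwise bound to a uniform one.

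The main obstacle is the singularity of $\widetilde N(t,x,y)$ as $x\to y$, which prevents a direct application of Kolmogorov's continuity theorem to the variables $(x,y)$. To handle this I would perform a dyadic decomposition in $|x-y|$: for each $j \ge 0$ let $S_j = \{(x,y): |x-y| \in [2^{-j-1}\epsilon^{1/\theta}, 2^{-j}\epsilon^{1/\theta}]\}$, apply Lemma \ref{lem:N:holder} together with chaining on each shell separately (where the sub-Gaussian parameter $\cu\,|x-y|^\theta \le \cu\,(2^{-j}\epsilon^{1/\theta})^\theta$ is uniform), and then sum the shell-level tail bounds—the geometric smallness in $j$ absorbs the number of shells into the universal constant. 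The only delicate point is ensuring that the chaining estimate within each shell produces the correct $\alpha^\theta$ (rather than $\sqrt\alpha$) exponent; this is where the cancellation in the numerator $N(t,x) - N(t,y)$, i.e. the better-than-trivial BDG bound on the heat-kernel \emph{difference} $G(t-r,x-z)-G(t-r,y-z)$, is essential.
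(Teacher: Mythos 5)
Your overall architecture matches the paper's proof: the spatial-increment tail bound of Lemma \ref{lem:N:holder}, a dyadic chaining over the space--time box in which pairs $(x,y)$ are organized by the dyadic scale of $|x-y|$, and a subdivision of the spatial interval for $\alpha<1$. (The paper's telescoping over levels $n_1\le i\le n$, with $2^{-n_1}\asymp|x-y|$, is exactly your shell decomposition; and its $1/(1\wedge\sqrt\alpha)$ prefactor comes simply from cutting $[a,a+\epsilon^{1/\theta}]$ into $\alpha^{-1/2}$ intervals of length $\sqrt\alpha\,\epsilon^{1/\theta}$ and invoking the $\alpha\ge1$ case, not from a ``terminal-slice'' concentration step.) However, two parts of your write-up do not stand as stated. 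The advertised first route --- rescale to $[0,1]^3$ and apply a multi-parameter Dudley bound with sub-Gaussian moduli $\cu(|\xi-\xi'|^\theta+|\eta-\eta'|^\theta)$ in space and $\cu\,\alpha^{\theta/2}|r-r'|^{1/4}$ in time --- cannot be run: because the normalization $|\xi-\eta|^{\frac12-\theta}$ changes across an increment, neither modulus holds uniformly. You concede this for the spatial increments, but the temporal claim is equally unsupported: the It\^o-isometry bounds only give a temporal increment of size $\cu\min\bigl(|\xi-\eta|^{\theta},\,(\alpha|r-r'|)^{\frac14}|\xi-\eta|^{\theta-\frac12}\bigr)$, whose uniform envelope over $|\xi-\eta|$ is $\cu(\alpha|r-r'|)^{\theta/2}$, which for $\theta<\tfrac12$ is strictly weaker than your claimed $\cu\,\alpha^{\theta/2}|r-r'|^{1/4}$ (the discrepancy is realized at $|\xi-\eta|\asymp(\alpha|r-r'|)^{1/2}$). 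So only your fallback shell plan is viable.

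The more serious issue is that the shell plan defers exactly the estimate the lemma turns on, which you yourself label ``the only delicate point.'' On the shell $|x-y|\asymp 2^{-n_1}$ the admissible threshold for $N(t,x)-N(t,y)$ is $\lambda\epsilon\,2^{-n_1(\frac12-\theta)}$ against a standard deviation $\lesssim\cu\min\bigl(2^{-n_1/2},\,\alpha^{1/4}\epsilon^{\frac{1}{2\theta}}\bigr)$, and one must: (i) check that at the coarsest parabolic scale $2^{-n_0}\asymp\sqrt\alpha\,\epsilon^{1/\theta}$ this yields precisely the exponent $\lambda^2/(\cu^2\alpha^\theta)$; (ii) distribute the budget over the chaining scales $n\ge n_0$ so that finer scales improve geometrically --- this is the content of the paper's constraint $\gamma_1-\gamma_0=\tfrac12-\theta$ in \eqref{eq:gamma}, the scale-dependent threshold $\lambda K\epsilon\,2^{\gamma_0 n_0}2^{-\gamma_1 n}$ in the events $A(n,\lambda)$, and the substitution $\alpha^\theta\epsilon^2 2^{2\theta n_0}\ge1$ coming from the choice of $n_0$ in \eqref{n_0}; and (iii) absorb the entropy prefactor, which for $\alpha\ge1$ grows like $\alpha$ per scale, into the slack $\alpha^\theta\ge1$ (or into $\mathbf K_3$, after noting the bound is trivial when its right-hand side exceeds one). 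Your appeal to the cancellation $\mathrm{Var}\bigl(N(t,x)-N(t,y)\bigr)\lesssim\cu^2|x-y|$ is indeed why the fine shells produce $\alpha^{-\theta}$ when $\alpha\le1$, but it does not by itself give the claimed bound for $\alpha\ge1$, where the whole point is the bookkeeping in (ii)--(iii). In short: right skeleton, same as the paper's, but the quantitative core that makes the exponent come out as $\lambda^2/(\cu^2\alpha^\theta)$ is missing.
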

 \begin{proof}  We will show \eqref{eq:Nt:tail} when $a=0$ and the same proof works for the general $a\in (0,1)$, which is similar to Lemma \ref{lem:eq:N:tail} and Remark \ref{rem:N:tail}.  Let us first consider the case when $\alpha\ge 1$. Consider the following grid on $[0,\alpha \epsilon^{\frac2\theta}]\times [0, \epsilon^{\frac1\theta}]$, where the first coordinate is time and the second is space: 
 \[ \mathbb G_n =\left\{\left(\frac{j}{2^{2n}}, \, \frac{k}{2^n}\right): 0\le j\le \alpha \epsilon^{\frac2\theta}2^{2n},\, 0\le k \le \epsilon^{\frac1\theta}2^n\right\}. \]
 The grid $\mathbb G_n$ will consist of only the point $(0,0)$ if  $n<n_0$ where
 \begin{equation}\label{n_0}
 n_0:=\lceil \log_2 (\alpha^{-\frac12}\epsilon^{-\frac1\theta})\rceil
 \end{equation}
 Any $p\in \mathbb G_n$ will be of the form $\left(\frac{j}{2^{2n}}, \, \frac{k}{2^n}\right)$ , so for notational convenience, we set
 \begin{equation*}
 N(p):=N\left(\frac{j}{2^{2n}}, \, \frac{k}{2^n}\right).
 \end{equation*}
 We will choose two parameters $0<\gamma_0(\theta)<\gamma_1(\theta)<\frac12$ which depend only on $\theta$ which satisfy the following constraint
 \be\label{eq:gamma}
% \begin{split}
 \frac12-\theta=\gamma_1-\gamma_0.
 %&\frac12-\
% \end{split}
 \ee
 We will fix the constant
 \be \label{eq:K} K = \frac{1-2^{-\gamma_1}}{2^{1+\gamma_0 n_0}},\ee
and consider the events 
 \[ A(n,\lambda) =\left\{|N(p)-N(q)|\le \lambda K \epsilon 2^{-\gamma_1n}2^{\gamma_0n_0},\,  \text{for all } p, q \in \mathbb G_n \text{ spatial neighbors}\right\}.\]
 By $p, q$ being spatial neighbors in the grid $\mathbb G_n$, we mean that $p, q$ have the same time coordinate but their spatial coordinates are adjacent in $\mathbb G_n$. For instance $\left(\frac{j}{2^{2n}}, \, \frac{k-1}{2^n}\right)$ and $\left(\frac{j}{2^{2n}}, \, \frac{k+1}{2^n}\right)$ are spatial neighbors of $\left(\frac{j}{2^{2n}}, \, \frac{k}{2^n}\right)$ .

  The number of such pairs of points $p, q$ is bounded by $2\cdot \alpha \epsilon^{\frac2\theta}2^{2n} \cdot \epsilon^{\frac1\theta} 2^n \le  2^4\cdot 2^{3(n-n_0)}$, where we have used \eqref{n_0}. Therefore a union bound along with the first tail bound in Lemma \ref{lem:N:holder} gives 
 \bes
 \begin{split}
 P\big(A(n,\lambda)^c\big) & \le C_1  2^{3(n-n_0)} \exp \left(-\frac{C_2\lambda^2K^2\epsilon^22^{-2\gamma_1n}2^{2\gamma_0n_0}}{\cu^22^{-n}}\right) \\
 & \le C_1  2^{3(n-n_0)} \exp \left(-\frac{C_2\lambda^2K^2 \alpha^{-\theta} 2^{-2n_0\theta}2^{-2\gamma_1n}2^{2\gamma_0n_0}}{\cu^22^{-n}}\right)  \\
 & \le C_1 2^{3(n-n_0)} \exp\left(-\frac{C_2\lambda^2K^22^{(1-2\gamma_1)(n-n_0)}}{\cu^2\alpha^\theta}\right),
 \end{split}
 \ees
where the second inequality follows since $\alpha^\theta \epsilon^2 2^{2n_0\theta} \ge 1$ by our choice of $n_0$ and the final inequality is obtained using the choice $\gamma_0, \gamma_1$ in \eqref{eq:gamma}. 
 
We now let $A(\lambda):= \cap_{n\ge n_0} A(n, \lambda)$ and use a union bound once again to obtain
 \bes
 \begin{split}
 P\big(A(\lambda)^c\big) &\le \sum_{n\ge n_0} P\big(A(n,\lambda)^c\big) \\
 &\le C_1\sum_{n\ge n_0} 2^{3(n-n_0)} \exp\left(-\frac{C_2\lambda^2K^22^{(1-2\gamma_1)(n-n_0)}}{\cu^2\alpha^\theta}\right) \\
 &\le C_3 \exp\left(-\frac{C_4 \lambda^2K^2}{\cu^2\alpha^\theta}\right).
 \end{split}
 \ees
 Now on the event $A(\lambda)$, one has for $p,q$ spatial neighbors in $\mathbb G_n$ 
 \bes
% \begin{split}
 \frac{|N(p)-N(q)|}{|p-q|^{\frac12-\theta}} \le \frac{\lambda K \epsilon 2^{-\gamma_1 n}2^{\gamma_0n_0}}{2^{-n(\frac12-\theta)}}  \le \lambda \epsilon,
 %\end{split}
 \ees
 by our choice of $\gamma_0, \gamma_1$ and $K$ in \eqref{eq:gamma} and \eqref{eq:K}. 
 
We now show that the above bound continues to hold when $p, q\in \mathbb G_n$ are no longer spatial neighbors but have the same time coordinate.  Let the spatial coordinate of $p$ be $k2^{-n}$ and let the spatial coordinate of $q$ be $l2^{-n}$, and without loss of generality assume $k<l$. Find the smallest positive integer $n_1$ with $n_0\le n_1\le n$ such that 
\be\label{eq:n1}\frac{k}{2^n}\le \frac{k_1}{2^{n_1}} <\frac{k_1+1}{2^{n_1}} \le \frac{l}{2^n}\ee
for some nonnegative integer $k_1$. First note that we must have
\be \label{eq:k-l} \frac{1}{2^{n_1}} \le \left|\frac{k}{2^n} -\frac{l}{2^n}\right| \le \frac{4}{2^{n_1}}.\ee
The lower bound is clear by \eqref{eq:n1} and the upper bound follows from the minimality of $n_1$, for if the difference between $k2^{-n}$ and $l2^{-n}$ was larger than $2^{2-n_1}$ then there would be two spatial neighbors in $\mathbb G_{n_1-1}$ between them.   

One next observes that we can find a sequence of points $p_i, \, n_1\le i \le n$ and $q_i,\, n_1\le i\le n$ with the same time coordinates as $p, q$, such that $p_i, p_{i+1}$ (resp. $q_i, q_{i+1}$) are either equal or adjacent spatial points in $\mathbb G_i$. In addition at most one such adjacent spatial pair $(p_i, p_{i+1})$ (resp. $(q_i, q_{i+1})$) %$(p_i, p_{i+1})$ (resp. $(q_i, q_{i+1})$)
is in each $\mathbb G_j,\, n_1\le j\le n$, and $p_n=p, q_n=q$. % and $p_i=p$ (resp $q_i=q$) eventually.
 Therefore 
\bes
\begin{split}
\left\vert N(p) -N(q)\right\vert & \le \sum_{i=n_1}^n \big|N(p_i) - N(p_{i+1})\big| +\sum_{i=n_1}^n \big|N(q_i) - N(q_{i+1})\big|  \\
&\le 2\sum_{i=n_1}^n \lambda K \epsilon 2^{-\gamma_1 i} 2^{\gamma_0 n_0}
\end{split}
\ees
on the event $A(\lambda)$. As a consequence, on this event
\bes
\begin{split}
\frac{\left\vert N(p) -N(q)\right\vert}{|p-q|^{\frac12-\theta}} &\le \frac{2\lambda K\epsilon}{1-2^{-\gamma_1}} \cdot \frac{2^{\gamma_0n_0}2^{-\gamma_1n_1}}{2^{-n_1(\frac12-\theta)}} \le \lambda\epsilon
\end{split}
\ees
by \eqref{eq:k-l} and our choice of $\gamma_0,\gamma_1$ and $K$ in \eqref{eq:gamma} and \eqref{eq:K}. This completes the proof in the case $\alpha\ge1$.

In the case $0<\alpha<1$, we divide the spatial interval into smaller intervals of length $\sqrt{\alpha} \epsilon^{\frac1\theta}$ to get that 
\begin{align*}
 P\left(\sup_{\stackrel{0\le t\le \alpha \epsilon^{\frac2\theta}}{x,y\in [0, \epsilon^{\frac1\theta}],\, x\ne y }} \big|\widetilde N(t,x,y)\big| > \lambda\epsilon\right) & \le \sum_{i=1}^{1/\sqrt{\alpha}} P\left(\sup_{\stackrel{0\le t\le \alpha \epsilon^{\frac2\theta}}{x,y\in \left[i\sqrt{\alpha}\epsilon^{\frac1\theta}, (i+1)\sqrt \alpha\epsilon^{\frac1\theta}\right],\, x\ne y }} \big|\widetilde N(t,x,y)\big|> \frac{\lambda(\alpha^{\frac\theta 2} \epsilon)}{\alpha^{\frac\theta 2}}\right).
\end{align*}
Then, as explained in the beginning of the proof of this lemma (see also Remark \ref{rem:N:tail}), the probabilities inside the sum on the right hand side above have the same upper bound as for \[ P\left(\sup_{\stackrel{0\le t\le \alpha \epsilon^{\frac2\theta}}{x,y\in [0, \sqrt \alpha\epsilon^{\frac1\theta}],\, x\ne y }} \big|\widetilde N(t,x,y)\big|> \frac{\lambda(\alpha^{\frac\theta 2} \epsilon)}{\alpha^{\frac\theta 2}}\right).\] We now apply the previous argument to finish the proof.
\end{proof}
 \begin{rem} \label{rem:Nt:tail} From Remark \ref{rem:N:tail}, it also follows from the proof that if  $\big|\sigma\big(s, y, u(s,y)\big)\big|\le C_1\epsilon^{\frac{1}{2\theta}}$ then one can bound the right hand side of \eqref{eq:Nt:tail} by $\frac{\mathbf K_3}{1\wedge\sqrt \alpha}\exp\left( - \mathbf K_4 \frac{\lambda^2}{C_1^2\epsilon^{\frac1\theta}\alpha^\theta}\right)$.
\end{rem}

Define 
\begin{equation}\label{eq:N:hash}
N^{\#}(s,t,x):=\frac{N(t,x)-N(s,x)}{|t-s|^{\frac14-\frac\theta 2}}.
\end{equation}
The proof of the following lemma is similar to that of Lemma \ref{lem:Nt:tail} and is therefore omitted. 
\begin{lem} \label{lem:Nh:tail} Let $\theta \in (0, \frac12)$. There exist constants $\mathbf K_7, \mathbf K_8$ dependent only on $\theta$ such that for all $\alpha, \lambda, \epsilon>0$ and for all $a\in [0,1)$ with $a+\epsilon^{1/\theta}<1$,  we have 
 \be\label{eq:Nh:tail} 
 P\left(\sup_{\stackrel{0\le s, t\le \alpha \epsilon^{\frac2\theta},\, s\ne t}{x\in [a, a+\epsilon^{\frac1\theta}] }}\big | N^{\#}(s,t,x)\big| > \lambda\epsilon\right) \le \frac{\mathbf K_7}{1\wedge \sqrt \alpha} \exp \left(-\mathbf K_8\frac{\lambda^2}{\cu^2  \alpha^\theta} \right).
 \ee
 \end{lem}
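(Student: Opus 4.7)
The plan is to follow the dyadic chaining of Lemma \ref{lem:Nt:tail} essentially verbatim, with the spatial-neighbor increments replaced by temporal-neighbor increments and the first tail bound in Lemma \ref{lem:N:holder} replaced by the second. The key algebraic identity that makes everything match is
\[
\tfrac14-\tfrac\theta 2 = \tfrac12\bigl(\tfrac12-\theta\bigr),
\]
so that under the parabolic scaling $|t-s|^{1/2}\sim |x-y|$ built into the grid $\mathbb G_n$, the temporal H\"older exponent $\frac14-\frac\theta 2$ contributes the same factor $2^{-n(\frac12-\theta)}$ that drove the spatial argument.

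As in Lemma \ref{lem:Nt:tail}, I would first reduce to $a=0$ (the proof of the general-$a$ case is identical by stationarity of the increment tail bound in Lemma \ref{lem:N:holder}), and then treat $\alpha\ge 1$ first. Reuse the same grid $\mathbb G_n$ on $[0,\alpha\epsilon^{2/\theta}]\times[0,\epsilon^{1/\theta}]$, the same threshold $n_0=\lceil\log_2(\alpha^{-1/2}\epsilon^{-1/\theta})\rceil$, parameters $0<\gamma_0<\gamma_1<\tfrac12$ with $\tfrac12-\theta=\gamma_1-\gamma_0$, and constant $K=(1-2^{-\gamma_1})/2^{1+\gamma_0 n_0}$. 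Define the event
\[
A(n,\lambda)=\bigl\{|N(p)-N(q)|\le \lambda K\epsilon\, 2^{-\gamma_1 n}2^{\gamma_0 n_0}\ \text{for all \emph{temporal} neighbors } p,q\in\mathbb G_n\bigr\},
\]
where temporal neighbors means two grid points sharing a spatial coordinate with time coordinates differing by $2^{-2n}$. The number of such pairs at level $n$ is again bounded by $2^4\cdot 2^{3(n-n_0)}$, and the temporal distance $|t-s|=2^{-2n}$ gives $|t-s|^{1/2}=2^{-n}$; applying the second bound in Lemma \ref{lem:N:holder} then yields exactly the same per-level estimate as in Lemma \ref{lem:Nt:tail}, namely
\[
P(A(n,\lambda)^c)\le C_1 2^{3(n-n_0)}\exp\!\left(-\frac{C_2\lambda^2 K^2\, 2^{(1-2\gamma_1)(n-n_0)}}{\mathscr C_2^2\alpha^\theta}\right).
\]

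Summing over $n\ge n_0$ in a geometric fashion, I would obtain $P(A(\lambda)^c)\le C_3\exp(-C_4\lambda^2 K^2/(\mathscr C_2^2\alpha^\theta))$, where $A(\lambda)=\cap_{n\ge n_0}A(n,\lambda)$. On $A(\lambda)$, any pair of temporal neighbors at level $n$ satisfies
\[
\frac{|N(p)-N(q)|}{|t-s|^{\frac14-\frac\theta 2}}\le \frac{\lambda K\epsilon\, 2^{-\gamma_1 n}2^{\gamma_0 n_0}}{2^{-n(\frac12-\theta)}}\le \lambda\epsilon
\]
by the choice of $K,\gamma_0,\gamma_1$. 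Extension to arbitrary (not adjacent) time points at a common spatial coordinate is then handled by the same two-chain telescoping argument used in Lemma \ref{lem:Nt:tail}: find the coarsest dyadic level $n_1\in[n_0,n]$ at which the two time coordinates are separated by a grid interval, and sum temporal-neighbor increments along dyadic refinements on each side; the geometric sum is controlled by the same constants.

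Finally, the case $0<\alpha<1$ is handled exactly as in Lemma \ref{lem:Nt:tail}: partition $[0,\epsilon^{1/\theta}]$ into $1/\sqrt\alpha$ spatial subintervals of length $\sqrt\alpha\,\epsilon^{1/\theta}$, rescale $\lambda\mapsto \lambda/\alpha^{\theta/2}$ on each, and union-bound; the factor $1/\sqrt\alpha$ is absorbed into the prefactor $\mathbf K_7/(1\wedge\sqrt\alpha)$. There is no substantive new obstacle here: the only conceptual point to verify is that the temporal tail bound in Lemma \ref{lem:N:holder} scales with $|t-s|^{1/2}$ in precisely the way the spatial tail scales with $|x-y|$, so the chaining bookkeeping is literally unchanged.
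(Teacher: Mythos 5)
Your proposal is correct and is exactly the route the paper intends: the paper omits this proof, stating it is similar to that of Lemma \ref{lem:Nt:tail}, and your adaptation (temporal neighbors at time spacing $2^{-2n}$, the second tail bound of Lemma \ref{lem:N:holder}, and the identity $\frac14-\frac\theta2=\frac12(\frac12-\theta)$ making the chaining constants $\gamma_0,\gamma_1,K$ and the $\alpha<1$ rescaling carry over verbatim) is precisely the intended argument.
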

 \begin{rem}\label{rem:Nh:tail}Note also here that a similar statement to that of Remark \ref{rem:Nt:tail} also holds in this case. That is, if $\big|\sigma\big(s, y, u(s,y)\big)\big|\le C_1\epsilon^{\frac{1}{2\theta}}$, then one can bound the right hand side of \eqref{eq:N:tail} by $\frac{\mathbf K_7}{1\wedge\sqrt \alpha}\exp\left( - \mathbf K_8 \frac{\lambda^2}{C_1^2\epsilon^{\frac1\theta}\alpha^\theta}\right)$.
 \end{rem}

We will also need some estimates concerning $G(t,\,x)$, which come from Lemmas 3.1 and 3.2 of \cite{athr-jose-muel}. For the lemma below let
\begin{equation*}
x_*=\begin{cases} x,\, & 0\le x\le \frac12\\ x-1, \,& \frac12<x\le 1.\end{cases}
\end{equation*}
We have

\begin{lem}\label{lem:heat-kernel}
 There exist positive constants $C_0, C_1, \tilde C_1, C_2, C_3$  such that
\begin{equation}\label{eq:heat-kernel}
G(t,\,x)\leq C_0 p(t,\,x_*)\;\;\text{ for all } x\in [0,\,1],\, t\in \T,
\end{equation}
\begin{equation} \int_0^{t} \int_{\T} |G(s, x-z)-G(s, y-z)|^2 dz \, ds \leq C_0|x-y| \;\;\text{ for all } x\in[0, \, 1] \text{ and } t\ge 0,
\end{equation}
 \begin{equation} \label{eq:var:t_incr} \tilde C_1 \sqrt{t-s} \le\int_s^t \int_{\T} G^2(r,x) dx  dr\le C_1\sqrt{t-s} \;\;\text{ for } 0<s\le t\le s+1 ,
 \end{equation}
\begin{equation}\label{eq:var:t:g-g}
C_2\sqrt{t-s}\le \int_0^s\int_{\T} \left[G(t-r,z)- G(s-r,z)\right]^2 dzdr \le C_3 \sqrt{t-s}\;\; \text{ for all } 0<s\le t<\infty.
\end{equation}

\end{lem}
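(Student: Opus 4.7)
The plan is to combine two complementary representations of the torus heat kernel: the Poisson-summation formula $G(t,x) = \sum_{k\in\Z} p(t,x+k)$ from \eqref{eq:G:p} for the pointwise estimate \eqref{eq:heat-kernel}, and the Fourier series $G(r,x) = \sum_{n\in \Z} e^{2\pi i n x} e^{-2\pi^2 n^2 r}$ on $\T$ for the three $L^2$-type estimates. These are exactly the routes taken in Lemmas 3.1 and 3.2 of \cite{athr-jose-muel}, whose arguments transfer essentially verbatim; below I outline the main steps.

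For \eqref{eq:heat-kernel} I would reduce by symmetry to $x \in [0,\tfrac12]$, so that $x_* = x$ and the $k=0$ term of \eqref{eq:G:p} is exactly $p(t,x)$. For $|k|\ge 1$ the shift satisfies $|x+k|\ge |k|-\tfrac12$, whence Gaussian decay gives $p(t,x+k)\le (2\pi t)^{-1/2} e^{-(|k|-1/2)^2/(2t)}$. Splitting into short time $t\le 1$ (where the exponential tails are dominated by $p(t,x)$ since $|x|\le \tfrac12$) and long time $t>1$ (where both sides of \eqref{eq:heat-kernel} are of order $t^{-1/2}$ and the full periodic sum is bounded by a universal constant) produces a uniform $C_0$.

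For the remaining three estimates I would apply Parseval's identity on $\T$ throughout. The spatial difference bound reduces to $\sum_{n\in\Z} |e^{2\pi i n x}-e^{2\pi i n y}|^2 e^{-4\pi^2 n^2 s}$, and the elementary inequality $|e^{2\pi i n x}-e^{2\pi i n y}|^2 \le \min(4,\,4\pi^2 n^2(x-y)^2)$, followed by integration in $s$ (producing $1/(4\pi^2 n^2)$ for $n\ne 0$) and a dyadic split of the resulting sum at $|n|\approx 1/|x-y|$, yields the $O(|x-y|)$ bound. Estimate \eqref{eq:var:t_incr} reduces to integrating $\sum_n e^{-4\pi^2 n^2 r}$ in $r$, whose matching $\sqrt{t-s}$ bounds follow from the Poisson-summation asymptotic $\sum_n e^{-4\pi^2 n^2 r}\asymp r^{-1/2}$ as $r\downarrow 0$ together with the restriction $t\le s+1$. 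Finally, \eqref{eq:var:t:g-g} becomes, via Parseval,
\[
\sum_{n\ne 0}\frac{(1-e^{-2\pi^2 n^2(t-s)})^2\,(1-e^{-4\pi^2 n^2 s})}{4\pi^2 n^2},
\]
and splitting the sum at $|n|\approx (t-s)^{-1/2}$ using $1-e^{-x}\asymp \min(1,x)$ produces the matching $\sqrt{t-s}$ bounds.

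The main subtlety is to maintain \emph{two-sided} bounds with constants that are uniform in the remaining parameters, since these are what drive the near-sharp exponent $\epsilon^{-3/\theta}$ in the main theorems; in particular, the $n=0$ mode and the separate small-$s$ versus large-$s$ regimes each have to be handled carefully when extracting the lower bounds in \eqref{eq:var:t_incr} and \eqref{eq:var:t:g-g}.
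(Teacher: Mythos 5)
Your overall route—Poisson summation \eqref{eq:G:p} for the pointwise bound and the Fourier expansion $G(r,x)=\sum_{n\in\Z}e^{2\pi inx}e^{-2\pi^2n^2r}$ with Parseval for the three $L^2$ estimates—is the standard one, and it is essentially the argument of the source the paper itself cites (the paper gives no proof of this lemma beyond the reference to Lemmas 3.1 and 3.2 of \cite{athr-jose-muel}). The pointwise bound for $t\le 1$, the spatial-difference bound, and both upper bounds in \eqref{eq:var:t_incr} and \eqref{eq:var:t:g-g} go through exactly as you sketch.

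Two points do not. First, a minor one: your "long time $t>1$" discussion of \eqref{eq:heat-kernel} is wrong, since $G(t,x)\to 1$ as $t\to\infty$ (the mass equidistributes on $\T$) while $p(t,x_*)\asymp t^{-1/2}$, so the two sides are not comparable and no uniform $C_0$ exists for all $t$; this is harmless only because the statement restricts $t\in\T$. Second, and more seriously, the lower bounds do not "follow" from the dyadic split as claimed, because the modes at the relevant scale $|n|\approx (t-s)^{-1/2}$ carry damping factors. For \eqref{eq:var:t_incr}, Parseval gives $(t-s)+\sum_{n\ne0}e^{-4\pi^2n^2s}\,\frac{1-e^{-4\pi^2n^2(t-s)}}{4\pi^2n^2}$, which for $s$ bounded away from $0$ and $t-s\to 0$ (say $s=1$) is of order $t-s\ll\sqrt{t-s}$; for \eqref{eq:var:t:g-g}, the factor $1-e^{-4\pi^2n^2s}$ kills the contributing modes when $s\ll t-s$ (say $s\to0$ with $t-s=1$), and the sum tends to $0$ while $\sqrt{t-s}$ does not. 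So the lower bounds, read literally as stated here, are false in those regimes and no argument can close that gap: the statement is a loose transcription of the cited lemmas, in which the first integrand is $G^2(t-r,\cdot)$ (so the time integral effectively runs over $(0,t-s)$, where your split does give matching bounds) and the second lower bound is only needed, and only valid, when $t-s\lesssim s$—which is exactly the regime in which the paper applies it (quadratic variation over $[t_1,t_1+\delta^2]$ with $t_1=\delta^2$). Your closing remark that the small-$s$ versus large-$s$ regimes "have to be handled carefully" points at the right place but does not resolve it; a complete write-up should either prove the corrected statements or record these regime restrictions explicitly.
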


With the preliminaries in place we can now move on to proving the theorems stated in the introduction.

\section{Upper bounds}\label{sec:ubd}

\subsection{Upper bound in Theorem \ref{thm1} (\lowercase{a})} \label{sec:thm1:ubd}
We are assuming that the function $\sigma(t,x,u)=\sigma(t,x)$ does not depend on the third variable so the random field $u(t,x)$ is Gaussian. Before proving the required estimates, we describe the main strategy behind the proof.

Fix parameters $c_0>0, c_1\ge 4$ to be specified later, and let
 \[\delta:=\epsilon^{\frac1\theta}.\] 
 We consider discrete time-space points $(t_i, x_j)$, where the time points $t_i$ are uniformly spaced in $[0,T]$ and space points $x_j$ are uniformly spaced in $\T$:
\be \label{eq:t:x}
\begin{split}
& t_i= ic_0 \delta^2,\qquad  i=0,\, 1,\, \cdots, I:= \left[\frac{T}{c_0\delta^2}\right]\\
& x_j= j c_1\delta,\qquad j=0,\,1,\, \cdots, J:= \left[\frac{1}{c_1\delta}\right].
\end{split}
\ee
%\ees $t_i=i c_0\delta^2,\; i=0,\, 1,\, \cdots I=[T/c_0\delta^2]$ uniformly spaced apart. 
We clearly have 
\begin{equation}\label{eq:ubd:dis} \begin{split}P \left(\sup_{0\le t\le T}\HO^{(\theta)}_t(u) \le \epsilon\right) & \le P\left(\max_{\stackrel{i=0,1,\cdots, I}{j=0,1\cdots J}} \frac{\big|u(t_i, x_j+\delta) -u(t_i, x_j)\big|}{\delta^{\frac12-\theta}}\le \epsilon\right) \\
& \le P\left(\max_{\stackrel{i=0,1,\cdots, I}{j=0,1\cdots J}} \big|u(t_i, x_j+\delta) -u(t_i, x_j)\big|\le \epsilon^{\frac{1}{2\theta}}\right),
\end{split}
\end{equation}
Consider the events
\begin{equation}\label{eq:ai} A_i := \left\{\max_{j=0,1,\cdots, J} \big|u(t_i, x_j+\delta)- u(t_i, x_j)\big|\le \epsilon^{\frac{1}{2\theta}}\right\}.\end{equation}
From the above 
\be \label{eq:ubd:dis} \begin{split}
P \left(\sup_{0\le t\le T}\HO^{(\theta)}_t(u) \le \epsilon\right) & \le P \left( \bigcap_{i=0}^I A_i\right) \\
&= \prod_{i=0}^I P\left(A_i \big \vert A_0, A_1\cdots A_{i-1} \right).
\end{split}
\ee
We will show in Lemma \ref{lem:a1} below that for some $0<\eta<1,$
\begin{equation} \label{eq:ai:unif}
P\left(A_i \;\Big \vert \;u(s,x),\, s\le t_{i-1}, \, x\in \T \right) \le \eta^J
\end{equation}
{\it uniformly in $i$}. Since the above bound holds regardless of the profile up to time $t_{i-1}$ one can conclude that the right hand side of \eqref{eq:ubd:dis} is bounded by $\eta^{J(I+1)}$, which gives us the required upper bound in Theorem \ref{thm1}.

We have the following lemma which plays an important role along with the fact the solution is Gaussian.  For $k\in \N^+$ and $\delta>0$, we define 
\begin{equation}\label{noise_diff}
\tilde\Delta_k := N(t_1, x_k+\delta) - N(t_1, x_k).
\end{equation}
\begin{lem}\label{lem:var:covar} Fix $c_0>0$ and $c_1\ge 4$. Then there exist positive constants $C_0,\,C_1,\, C_2$ such that for all $\delta$ small enough,
\be \label{eq:var:bd}  C_0\mathscr C_1^2 \sqrt{c_0}\delta\le \textnormal{Var}\big(\tilde\Delta_k \big)\le C_1 \mathscr C_2^2\delta  %{\color{red}\;\; \text{check whether } C_0, C_1 \text{ depend on } c_0, c_1} 
\ee
uniformly in $k$. If $0<|x_k-x_l|< \frac12$ then 
\be
\label{eq:cov:bd}
\begin{split}
\left|\textnormal{Cov}\big(\tilde\Delta_k,\, \tilde\Delta_l \big)\right| \le C_2 \sqrt{c_0}\mathscr C_2^2\delta \exp\left(-\frac{|x_k-x_l|^2}{64 t_1}\right). %\frac{C_2\delta}{c_1(k-l)}\exp\left(-\frac{C_3c_1^2(k-l)^2}{c_0}\right).
\end{split}
\ee
\end{lem}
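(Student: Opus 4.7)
The plan is to exploit the Gaussian structure (since $\sigma(s,y,u)=\sigma(s,y)$ is deterministic) to reduce every quantity to a heat-kernel integral via Itô's isometry, and then to use the semigroup identity
\[ \int_{\T} G(r, a-y)\,G(r, b-y)\,dy = G(2r, a-b) \]
to collapse products of $G$-differences into pointwise values of $G(2r,\cdot)$. Writing $\tilde\Delta_k$ as a Wiener integral of $\sigma(s,y)[G(t_1-s, x_k+\delta-y)-G(t_1-s, x_k-y)]$ against $W$ and applying Itô isometry, the ellipticity bound $\mathscr C_1^2 \le \sigma^2(s,y) \le \mathscr C_2^2$ allows us to sandwich both $\mathrm{Var}(\tilde\Delta_k)$ and $|\mathrm{Cov}(\tilde\Delta_k,\tilde\Delta_l)|$ between constant multiples of purely analytic heat-kernel integrals, independent of $k,l$ thanks to translation invariance on the torus.

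For the variance, applying the semigroup identity inside the $y$-integral yields
\[ \mathrm{Var}(\tilde\Delta_k) \asymp \int_0^{t_1}\!\!\int_\T \bigl[G(r,z+\delta)-G(r,z)\bigr]^2 dz\, dr = \int_0^{2t_1}\!\bigl[G(s,0)-G(s,\delta)\bigr]\,ds. \]
The upper bound $\le C\mathscr C_2^2\delta$ is immediate from the second inequality of Lemma \ref{lem:heat-kernel} with $|x-y|=\delta$. For the lower bound, since $\delta$ is small and $s\le 2t_1 = 2c_0\delta^2$, only the $k=0$ term in $G(s,z)=\sum_k p(s, z+k)$ is non-negligible, and I would show $G(s,0)-G(s,\delta) \ge \tfrac12 [p(s,0)-p(s,\delta)]=\tfrac12(2\pi s)^{-1/2}(1-e^{-\delta^2/(2s)})$ (torus corrections are superexponentially small in $1/(c_0\delta^2)$). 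Restricting the $s$-integration to $s\in[0,\min(2t_1,\delta^2/2)]$ makes $1-e^{-\delta^2/(2s)}\ge 1-e^{-1}$, so
\[ \mathrm{Var}(\tilde\Delta_k) \ge c\mathscr C_1^2 \int_0^{\min(2t_1,\delta^2/2)} s^{-1/2}\, ds \ge c'\mathscr C_1^2\sqrt{\min(c_0,1)}\,\delta, \]
which gives the claimed $\sqrt{c_0}\,\delta$ scaling in the relevant regime of small $c_0$.

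For the covariance, with $d=x_k-x_l$ and $0<|d|<1/2$, I would bound each $|G(r,x_k+\delta-y)-G(r,x_k-y)|$ by $G(r,x_k+\delta-y)+G(r,x_k-y)$ (and similarly for $l$), then expand the product and use the semigroup identity on each of the four resulting $y$-integrals to obtain
\[ |\mathrm{Cov}(\tilde\Delta_k,\tilde\Delta_l)| \le \mathscr C_2^2 \int_0^{t_1}\!\bigl[2G(2r, d)+G(2r, d+\delta)+G(2r, d-\delta)\bigr] dr. \]
The assumption $c_1\ge 4$ ensures $|d\pm\delta|\ge 3|d|/4$, so combining with the Gaussian dominance $G(2r,z)\le C_0(4\pi r)^{-1/2}e^{-z_*^2/(4r)}$ from Lemma \ref{lem:heat-kernel} and using $z_*=z$ for $|z|<1/2$ gives the uniform bound
\[ G(2r,d)+G(2r,d\pm\delta) \le C r^{-1/2} e^{-9d^2/(64r)}. \]
Since $r\le t_1$ forces $e^{-9d^2/(64r)}\le e^{-d^2/(64t_1)}$, integration yields $\int_0^{t_1} r^{-1/2} dr = 2\sqrt{t_1}=2\sqrt{c_0}\,\delta$ and hence the claimed $C_2\sqrt{c_0}\,\mathscr C_2^2\,\delta\,\exp(-|x_k-x_l|^2/(64 t_1))$. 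The main obstacle I anticipate is not conceptual but cosmetic: keeping the torus-versus-line approximation for $G$ clean in the variance lower bound, so that the $\sqrt{c_0}\,\delta$ scaling is visible for small $c_0$ and no stray constant spoils the interpretation of the inequality.
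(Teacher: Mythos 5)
Your proof is correct and follows essentially the same route as the paper's: It\^o isometry together with Assumption \ref{assump1} and Lemma \ref{lem:heat-kernel} for the variance upper bound, the semigroup identity reducing the variance to $\int [G(2s,0)-G(2s,\delta)]\,ds$ with negligible torus corrections for the lower bound, and the expansion into $2G(2s,a)+G(2s,a+\delta)+G(2s,a-\delta)$ combined with $c_1\ge 4$ and Gaussian domination for the covariance. The only cosmetic difference is that you restrict the time integral to $s\le\min(2t_1,\delta^2/2)$ instead of using $1-e^{-\delta^2/(2s)}\ge 1-e^{-1/(4c_0)}$ on all of $[0,2t_1]$, which gives the same $\sqrt{c_0}\,\delta$ scaling up to a constant depending on the fixed $c_0$, exactly as in the paper.
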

\begin{proof}
Since $\tilde\Delta_k$ is a mean zero random variable, we can use It\^{o}'s isometry along with the bound on $\sigma$ given by \eqref{eq:ellip} to obtain 
\bes
\begin{split}
\textnormal{Var}\big(\tilde\Delta_k \big) &\le \mathscr C_2^2 \int_0^{t_1} \int_{\T}\big[G(s, y+\delta)-G(s, y)\big]^2 dy ds\\
&\leq \mathscr C_2^2 C \delta,
\end{split}
\ees
where the last inequality comes from Lemma \ref{lem:heat-kernel}. 
%We now \eqref{space-increment} with $\rho<\frac{1}{2}$ together with some simple computations to obtain 
%\bes
%\begin{split}
%\textnormal{Var}\big(\tilde\Delta_k \big) &\le C\mathscr C_2^2 \delta^{2\rho}\int_0^{t_1} \frac{1}{s^{\rho+1/2}} ds.
%\end{split}
%\ees
This gives the required upper bound in \eqref{eq:var:bd}% since $t_1$ is a multiple of $\delta^2$. 

Next, we have
\bes 
 \begin{split}
 \textnormal{Var}\big(\tilde\Delta_k \big) & \ge \mathscr C_1^2 \int_0^{t_1} \int_{\T}\big[G(s, y+\delta)-G(s, y)\big]^2 dy ds \\
 &=2\mathscr C_1^2 \int_0^{t_1}[G(2s,0)-G(2s,\delta)] ds\\
 &\geq C \mathscr C_1^2 \int_0^{t_1}\frac{1}{\sqrt{s}} ds,
 \end{split} 
 \ees which gives the lower bound in \eqref{eq:var:bd}. Let us explain how we obtain the last inequality above.  For $k\ge 1$ and $\delta$ small enough one has $(k-\delta)^2\ge \frac{1}{10}(k^2+\delta^2)$. Thus for all $0\leq s\leq 2t_1$,
\begin{align*} 
G(s, 0)-G(s, \delta) & = \frac{1}{\sqrt{2\pi s}}  \left\{ \sum_{k\in \Z} e^{-k^2/2s} - \sum_{k\in\Z} e^{-(k+\delta)^2/2s} \right\}\\
&\geq  \frac{1}{\sqrt{2\pi s}}\left\{ 1 - e^{-\delta^2/2s}- \sum_{k=1}^\infty e^{-(k-\delta)^2/2s} \right\} \\
&\geq \frac{1}{\sqrt{2\pi s}}\left\{ 1 -  e^{-1/4c_0}-  e^{-\delta^2/20s} \sum_{k=1}^\infty e^{-k^2/20s} \right\}\\
&\geq \frac{C}{\sqrt{2\pi s}} \left\{ 1- e^{-1/4c_0}-\frac{\sqrt{40\pi t_1}}{2} \cdot e^{-1/(40c_0)} \right\}\\
&\geq \frac{C}{\sqrt{s}}.
\end{align*}
The second last inequality is a consequence of bounding the sum from above by an appropriate integral from a Riemann sum approximation.

We next turn to the bound on the covariance. Observe that if we assume that $k>l$, we have $a=x_k-x_l = (k-l)c_1\delta$ and therefore $a+\delta >a-\delta >a/4$.  Using this, the semigroup property of the heat kernel and \eqref{eq:heat-kernel} we have 
 \begin{align*}
 \left|\textnormal{Cov} \big(\tilde\Delta_k, \tilde\Delta_l\big) \right| & \le  \mathscr C_2^2 \int_0^{t_1} \int_{\T} \left |G(s, y+\delta) -G(s,y)\right|\cdot \left|G(s, y+a+\delta) -G(s, y+a)\right| \, dy \, ds\\
&\leq   \mathscr C_2^2 \int_0^{t_1}  \left( 2 G(2s, a) + G(2s, a-\delta) + G(2s, a+\delta)   \right) \, ds \\
&\leq C\mathscr C_2^2 \int_0^{t_1} \left( 2 p(2s, a) + p(2s, a-\delta) + p(2s, a+\delta)\right) \, ds\\
&\leq C \mathscr C_2^2 \sqrt{t_1} \exp\left( - \frac{|a|^2}{64 t_1} \right),
 \end{align*}which completes the proof since $t_1=c_0\delta^2$ and $|a|=|x_k-x_l|$. 
\end{proof}

For the next lemma recall the events $A_i$ defined in \eqref{eq:ai}
\begin{lem}\label{lem:a1} Let $c_0=1$ and $c_1=K \frac{\cu^3}{\cl^3}$. We can find a $K>0$ large enough and $0<\eta<1$ such that for an arbitrary initial profile $u_0$, 
\[P(A_1\vert u_0) \le \eta^J.\]
\end{lem}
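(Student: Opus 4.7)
The plan is to exploit the Gaussianity of $u$ (which follows from $\sigma$ being independent of $u$) via a successive conditioning argument, leveraging the near-diagonality of the covariance matrix of the noise increments $(\tilde\Delta_j)$ that follows from the fast off-diagonal decay in Lemma \ref{lem:var:covar} when $c_1$ is large.

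First I would write $u(t_1,x_j+\delta) - u(t_1,x_j) = \mu_j + \tilde\Delta_j$, with $\mu_j := (G_{t_1}*u_0)(x_j+\delta) - (G_{t_1}*u_0)(x_j)$ deterministic once $u_0$ is fixed and $\tilde\Delta_j$ as in \eqref{noise_diff}. Since $\epsilon^{1/(2\theta)} = \delta^{1/2}$, the event $A_1$ becomes $\bigcap_{j=0}^{J}\{|\tilde\Delta_j + \mu_j|\le \delta^{1/2}\}$, involving the centered Gaussian vector $(\tilde\Delta_0,\ldots,\tilde\Delta_J)$. Writing $\sigma_j^{*2}$ for the conditional variance of $\tilde\Delta_j$ given $\tilde\Delta_0,\ldots,\tilde\Delta_{j-1}$, successive conditioning then gives
\[
P(A_1\mid u_0) \;\le\; \prod_{j=0}^{J}\sup_{a\in\R} P\bigl(|\sigma_j^* Z + a|\le \delta^{1/2}\bigr) \;=\; \prod_{j=0}^{J}\bigl(2\Phi(\delta^{1/2}/\sigma_j^*)-1\bigr),
\]
where $Z$ is standard normal and $\Phi$ is its cdf; the identity uses that a Gaussian density peaks at its mean, so any shift only reduces the mass in a symmetric window around the origin. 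The bound is deterministic, so the product comes out of the expectation.

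The crux is to show $\sigma_j^{*2}\ge \tfrac12 C_0\cl^2\delta$ uniformly in $j$ once $c_1$ is large enough. From the identity $\sigma_j^{*2} = \var(\tilde\Delta_j) - \cov(\tilde\Delta_j,\tilde\Delta_{<j})^T\Sigma_{<j}^{-1}\cov(\tilde\Delta_j,\tilde\Delta_{<j})$ combined with Lemma \ref{lem:var:covar}, the diagonal entries of $\Sigma_{<j}$ are at least $C_0\cl^2\delta$, while its off-diagonal entries decay super-exponentially in $|k-l|$ and exponentially in $c_1^2$. A Gershgorin-type estimate then yields $\Sigma_{<j}\succeq \tfrac12 C_0\cl^2\delta\cdot I$ once $c_1$ exceeds a threshold of order $\sqrt{\log(\cu/\cl)}$. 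Using this to bound $\Sigma_{<j}^{-1}\preceq 2(C_0\cl^2\delta)^{-1}I$ reduces the quadratic form to $\frac{2}{C_0\cl^2\delta}\sum_{k<j}\cov(\tilde\Delta_j,\tilde\Delta_k)^2$, which is at most a constant multiple of $(\cu^4/\cl^2)\delta\,e^{-c_1^2/32}$. For $c_1 = K\cu^3/\cl^3$ with $K$ sufficiently large this quantity is comfortably $\le \tfrac12 C_0\cl^2\delta$, yielding the desired conditional variance bound. Each factor in the product is then at most some $\eta=\eta(\cl,\cu)<1$, giving $P(A_1\mid u_0)\le \eta^{J+1}\le \eta^J$ after adjusting $\eta$ if necessary.

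The main obstacle is precisely this uniform lower bound on the conditional standard deviations $\sigma_j^*$: the individual anti-concentration bounds $2\Phi(\delta^{1/2}/\sigma_j^*)-1<1$ factorize cleanly only if correlations do not collapse conditional variances. The rapid Gaussian decay of covariances coming from the heat kernel (Lemma \ref{lem:var:covar}), amplified by the large spatial spacing $c_1\delta$, is what makes $\Sigma_{<j}$ well-conditioned uniformly in $j$ and allows the product bound to close; this is where the explicit choice $c_1=K\cu^3/\cl^3$ plays its role.
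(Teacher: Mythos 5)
Your proof is correct and follows essentially the same route as the paper: sequential conditioning on the Gaussian increments $\tilde\Delta_k$, reduction to a uniform lower bound on the conditional variance of $\tilde\Delta_j$ given $\mathcal G_{j-1}$, and the variance/covariance estimates of Lemma \ref{lem:var:covar} with $c_1=K\cu^3/\cl^3$ large. The only difference is in the bookkeeping: where the paper solves $\mathbf y=\mathbf S\boldsymbol\beta$ and controls $\|\mathbf S^{-1}\|_{1,1}$ through the correlation matrix to bound the conditional-expectation term $X$, you bound the Schur-complement quadratic form directly via a Gershgorin-type bound $\Sigma_{<j}\succeq\tfrac12 C_0\cl^2\delta\, I$, which is an equivalent estimate.
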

\begin{rem} The above lemma implies \eqref{eq:ai:unif} because the initial profile is allowed to be arbitrary, and by the Markov property, $A_i$ depends only on the profile $u(t_{i-1},\cdot)$. 
\end{rem}
\begin{rem}\label{rem:markov} {\bf (Important)} Note that the arbitrariness of $u_0$ assumed in the lemma is so that we can apply the Markov property in \eqref{eq:ai:unif}, since a priori there is no restriction on $u(t_{i-1},\cdot)$. The Markov property is used several times in this paper and we shall use the arbitrariness of the profile at time $t_{i-1}$ often. {\it However} the reader should not be confused with Remark \ref{rem:reduction} where we assumed that the initial profile (i.e. at time $0$) for \eqref{eq:she} is $u_0\equiv 0$.
\end{rem}
\begin{proof}
For an arbitrary initial profile $u_0$
\be \label{eq:a1}
P(A_1) = \prod_{j=0}^{J-1} P\left(B_j \,\vert \,B_1, B_2,\cdots, B_{j-1} \right),
\ee
where  
\[ B_j= \left\{\vert u(t_1, x_j+\delta) - u(t_1, x_j)| \le \epsilon^{\frac{1}{2\theta}} \right\}.\]
We will show that each of the terms inside the product sign in \eqref{eq:a1} is uniformly (in $j$) bounded away from $1$, which will imply the lemma.  We will in fact prove a stronger statement that  $P\left(B_j \,\vert\, \mathcal{G}_{j-1}\right)$ is uniformly (in $j$) bounded away from $1$, where  $\mathcal G_{j-1}$ is the $\sigma$ algebra generated by the random variables $\tilde \Delta_k = N(t_1, x_k+\delta) - N(t_1, x_k), \, k\le j-1$.  %$\Delta_k:= u(t_1, x_k+\delta) - u(t_1, x_k),\, k=0,1,\cdots, j-1$. Now the random variables $\Delta_k$ need not be centered, and it easy to see that 
We thus need to show the existence of some $0< \eta<1$ such that
\be\label{eq:prob:ubd}
\left(|\Delta_j|\le \epsilon^{\frac{1}{2\theta}} \,\big\vert\, \mathcal G_{j-1}\right) \le  \eta, \ee
where $ \Delta_k := u(t_1, x_k+\delta) - u(t_1, x_k)$. We will obtain this by showing %$\tilde \Delta_k$ are centered Gaussian and to show that these are uniformly bounded away from $1$ we show 
%\begin{equation*}
%\begin{split}
%& P\left(\max_{j=0,1,\cdots, J} \big|u(t_1, x_j+\delta)- u(t_1, x_j)\big|\le \epsilon^{\frac{1}{2\theta}}\right) \\
%&= \prod_{j=0}^{J-1} P\bigg(\big|u(t_1, x_j+\delta)- u(t_1, x_j)\big| \le \epsilon^{\frac{1}{2\theta}}\,\Big\vert\, \big|u(t_1, x_k+\delta)- u(t_1, x_k)\big| \le \epsilon^{\frac{1}{2\theta}}\text{ for } k\le j-1\bigg).
%\end{split}
%\end{equation*}
%We now provide a uniform upper bound of $\eta<1$ for the probability on the right. 
%This is obtained from 
\begin{equation}\label{eq:cvar:lbd}\Var \left( \Delta_j\, \Big \vert \, \mathcal G_{j-1} \right) \ge C\epsilon^{\frac{1}{\theta}}, \end{equation}
for some constant $C$ independent of $j$. We can use general properties of Gaussian random vectors to write
\begin{equation} \begin{split}\label{square_brack}
 \Delta_j &=\Big[\left(G_{t_1}*u_0\right)(x_j+\delta) -\left(G_{t_1}*u_0\right)(x_j)\Big] +\tilde \Delta_j\\
 &= \Big[\left(G_{t_1}*u_0\right)(x+j+\delta) -\left(G_{t_1}*u_0\right)(x_j)\Big]+ X+Y,
\end{split}\end{equation}
where 
\[ X= \sum_{k=0}^{j-1} \beta_k \tilde \Delta_k \]
is the conditional expectation of $\tilde \Delta_j$ given $\mathcal G_{j-1}$. The variance of $Y$ is the conditional variance of $\tilde \Delta_j$ given $\mathcal G_{j-1}$, which is also the conditional variance in \eqref{eq:cvar:lbd}. Moreover $Y$ is independent of $\mathcal G_{j-1}$ and thus
\[ \Cov\Big(Y, \,\tilde \Delta_l\Big)=0,\quad  l=0,1,\cdots, j-1.\]
Therefore for all $l=0,1,\cdots, j-1$ we have 
\begin{equation}\label{eq:cov:beta}
%\begin{split}
 \Cov\Big(\tilde \Delta_j,\, \tilde \Delta_l\Big) 
= \sum_{k=0}^{j-1} \beta_k \Cov \Big(\tilde \Delta_k,\, \tilde \Delta_l\Big)%\; \text{ for } l=0,1,\cdots, j-1.
%\end{split}
\end{equation}
Let $\mathbf y= (y_0, y_1,\cdots, y_{j-1})^T$ where $y_l$ represents the entry on the left hand side above, $\boldsymbol \beta$ be the vector of the $\beta_l$'s, and let 
\[ \mathbf S= \left(\left(\Cov (\tilde \Delta_k,\, \tilde \Delta_l) \right)\right)_{0\le k, l\le j-1} \]
be the covariance matrix of the increments $\tilde \Delta_l$. We can thus rewrite \eqref{eq:cov:beta} in matrix form 
\be\label{eq:ysb} \mathbf y = \mathbf S\boldsymbol \beta.\ee

Let us next show that $\bf S$ is invertible. Write $\bf S=D(I-A)D$, where $\mathbf{D}\in \R^{j \times j}$ is the diagonal matrix with diagonal entries
\[ \text{Std}\Big(\tilde \Delta_k\Big),\quad k=0,1, \cdots, j-1,\]
%the standard deviation of the increments, 
and $\bf I-A$ is the correlation matrix of $\tilde \Delta_l$. Above $\text{Std}$ denotes the standard deviation of the random variable in parentheses. % So
%\[ {\bf S}^{-1} = {\bf D}^{-1} {\bf \Big(I-A\Big)}^{-1} {\bf D}^{-1}. \]
Denote by $\|\cdot\|_{1,1}$ the norm on matrices in $\R^{j\times j}$ induced by the $\ell_1$ norm $\|\cdot \|_1$ on $\R^j$. Now $\|{\bf A}\|_{1,1} = \max_j \sum_{i=1}^n |a_{i,j}|$ (see page 259 in \cite{bhim-rao}), we can use \eqref{eq:var:bd} and \eqref{eq:cov:bd} to obtain 
\[\|{\bf A}\|_{1,1} \le \frac{C_2 \mathscr C_2^2}{C_0 \mathscr C_1^2} \sum_{k\ge 1} \exp\left(-\frac{c_1^2k^2}{64 }\right) \le 1/3 \] by choosing $c_1=K \frac{\mathscr C_2^3}{\mathscr C_1^3}$ for a large $K$. In that case the inverse of $\mathbf{I}-\mathbf{A}$ exists and moreover
\[ \|({\bf I-A})^{-1}\|_{1,1} \le \frac{1}{1-\|{\bf A}\|_{1,1}}\le \frac32.\]
Using this along with the lower bound in \eqref{eq:var:bd} we obtain that $\bf S$ is invertible and moreover
\be \label{eq:s}  \|{\bf S}^{-1}\|_{1,1} \le \|{\bf D}^{-1}\|_{1,1} \cdot \|({\bf I-A})^{-1}\|_{1,1}\cdot \|{\bf D}^{-1}\|_{1,1} \le \frac{1}{C_0\mathscr C_1^2\delta}.\ee
%We will show below that $\mathbf S^{-1}$ exists and therefore 
%\be
%\label{eq:b:s:y} \|\boldsymbol \beta\|_1\le \|{\bf S^{-1}}\|_{1,1} \|\mathbf{y}\|_1,
%\ee
%where $\|\cdot\|_1$ is the $\ell_1$-norm on $\R^j$  and $\|\cdot\|_{1,1}$ is the norm on $\R^{j\times j}$ matrices induced by the $\|\cdot\|_{1}$ norm. 
Note also from \eqref{eq:cov:bd} 
\be\label{eq:y} \|\mathbf y\|_1\le  C_2\cu^2 \delta \sum_{k=1}^\infty \exp \left(- \frac{ c_1^2 k^2}{64}\right) \leq \frac{\tilde C_2\cu^2\delta}{c_1}\ee
for another constant $\tilde C_2$.
%By choosing $c_1$ large the above can be made very small. 

Let us now return to \eqref{eq:ysb} which we write as $\boldsymbol\beta=\mathbf{S}^{-1} \mathbf{y}$. From this we obtain 
\[ \|\boldsymbol \beta\|_1\le \|\mathbf{S}^{-1}\|_{1,1} \cdot \|\mathbf{y}\|_1 \le \frac{\tilde C_2}{C_0}\frac{ \cu^2}{c_1\cl^2 }.\]  
The above quantity can be made arbitrarily small by a choice of a large $K$ in $c_1=K \frac{\cu^3}{\cl^3}$.
%We have shown in \eqref{eq:s} and 
%\eqref{eq:y} that by choosing $c_1$ large enough we can make $\|\boldsymbol \beta\|_1$ arbitrary small so that 
 Thus
\[ \text{Std}(X) \le  \|\boldsymbol \beta\|_1\cdot \sup_k \;\text{Std} \big(\tilde \Delta_k\big) \le \frac{\tilde C_2\sqrt{C_1}}{C_0 } \frac{\cu^3}{c_1\cl^2} \sqrt{\delta} \] 
can be made a small multiple of $\cl\sqrt\delta$ by a large choice of $K$. We have used the upper bound in \eqref{eq:var:bd} above. Using this along with the lower bound in \eqref{eq:var:bd} once again we obtain
\begin{equation*}
%\begin{split}
\text{Std}(Y) \ge \text{Std}\big(\tilde \Delta_j\big) - \text{Std}(X) \ge\left(
\sqrt{C_0}\cl \sqrt{\delta}- \frac{\tilde C_2\sqrt{C_1}}{C_0 } \frac{\cu^3}{c_1\cl^2} \right)\sqrt{\delta}
%\end{split}
\end{equation*} 
uniformly in $j$, and this proves \eqref{eq:cvar:lbd} if we choose K large enough. We then get 
the bound in \eqref
{eq:prob:ubd} with 
$\eta =P\left(|N(0,1)|\le \frac{1}{C_3\cl}
\right)
$ for some constant $C_3$.
\end{proof}

\begin{rem}\label{rem:a1:ubd}
One can check that one obtains for small $\delta$
\[ P(A_1\vert u_0) \le  P\left(|N(0,1)|\le \frac{1}{C_3\cl}\right)^{\frac{\cl^3}{K\cu^3}}.
\]
In particular if $\cl$ is large one obtains a bound of 
$\exp\left(-C_4 \frac{\cl^3\log \cl}{K\cu^3}\right)$.
\end{rem}

\begin{rem}\label{rem:32}
We note that if $\sigma(s, y)$ is a constant function, then  Lemma \ref{lem:a1} can be easily proved by using Slepian's inequality. For instance, if $\sigma(s, y)=1$, then it is easy to see that there exist $c_0>0$ and $c_1>0$ such that $G(t_1, z)$ is convex for all $|z|\geq c_1\delta$, and  then the convexity implies that 
\[ \Cov( \tilde\Delta_k, \tilde\Delta_l) \leq 0.\]
Using now Slepian's inequality, we get 
\[ P\left(  \max_{k} |\tilde\Delta_k| \leq \epsilon^{1/2\theta} \right) \leq P\left(  \max_{k} \tilde\Delta_k \leq \epsilon^{1/2\theta} \right) \leq  \prod_{k} P\left( \tilde\Delta_k \leq \epsilon^{1/2\theta} \right),\]
which provides an upper bound on $P(A_1)$ as in \eqref{eq:a1} since $\tilde\Delta_k$ is mean-zero Gaussian with variance estimated in \eqref{eq:var:bd}.
\end{rem}

\subsection{Upper bound in Theorem \ref{thm1} (\lowercase{b})}

We provide the outline of the proof of the upper bound. The details are quite similar to that of the proof of the upper bound of Theorem \ref{thm1} (a) and are left to the reader.  Using the same discrete time-space points as in \eqref{eq:t:x} we obtain 
\begin{equation}\label{eq:ubd:dis:t} P \left(\sup_{x\in \T}\HS^{(\theta)}_x(u) \le \epsilon\right) %& \le P\left(\max_{\stackrel{i=0,1,\cdots, I}{j=0,1\cdots J}} \frac{\big|u(t_i, x_j+\delta) -u(t_i, x_j)\big|}{\delta^{\frac12-\theta}}\le \epsilon\right) \\
 \le P\left(\max_{\stackrel{i=0,1,\cdots, I}{j=0,1\cdots J}} \big|u(t_i+\delta^2, x_j) -u(t_i, x_j)\big|\le \epsilon^{\frac{1}{2\theta}}\right).
\end{equation} 
Defining
\[A_i^{\#} := \left\{\max_{j=0,1,\cdots, J} \big|u(t_i+\delta^2, x_j)- u(t_i, x_j)\big|\le \epsilon^{\frac{1}{2\theta}}\right\},\]
the upper bound will follow once we show the existence of a $0<\tilde \eta<1$ such that 
\[P\left( A_i^{\#} \;\Big \vert \;u(s,x),\, s\le t_{i}, \, x\in \T \right) \le \tilde\eta^J.\]
Note here the slight change from \eqref{eq:ai:unif}; here we condition on the profile up to $t_i$. One could have conditioned up to time $t_{i-1}$ but conditioning up to time $t_i$ makes the argument simpler. Define
\[ \tilde \Delta^{\#}_k := N(\delta^2, x_k).\] 
Using \eqref{eq:var:t_incr}, one obtains a similar result to Lemma \ref{lem:var:covar} with the random variables $\tilde \Delta_k$ replaced by $\tilde \Delta^{\#}_k$. We also need 
\begin{align*}
\cov\left(\tilde \Delta^{\#}_k ,\tilde \Delta^{\#}_l\right) &\le \cu^2 \int_0^{\delta^2} \int_{\mathbf T} G(\delta^2-s, x_k-y) G(\delta^2-s,x_l-y) \, dy ds \\
&\le \cu^2 \int_0^{\delta^2} G(2s, x_k, x_l) ds \\
&\le C\cu^2\delta \exp\left(-\frac{|x_k-x_l|^2}{2\delta^2}\right).
\end{align*} 
By the Markov property again, we only need to show 
\[P(A_0^{\#}\vert u_0) \le \tilde\eta^J\]
for some $0<\tilde \eta<1$. For this we note 
\bes 
P( A_0^{\#}) = \prod_{j=0}^{J-1} P\left(B_j^{\#} \,\vert \,B_1^{\#}, B_2^{\#},\cdots,B_{j-1}^{\#} \right),
\ees
where  
\[B_j^{\#}:= \left\{\vert u(\delta^2, x_j) - u(0, x_j)| \le \epsilon^{\frac{1}{2\theta}} \right\}.\]
Define $\Delta^{\#}_k=u(\delta^2, x_k)-u(0, x_k)$ and then show 
\[ P\left(|\Delta^{\#}_j|\le \epsilon^{\frac{1}{2\theta}} \,\big\vert\, \mathcal G_{j-1}\right) \le \tilde \eta\]
for some $0<\tilde \eta<1$, where  $\mathcal G_{j-1}$ is the $\sigma$ algebra generated by the random variables $\tilde \Delta^{\#}_k,\, k\le j-1$. Note that although 
\[ \Delta^{\#}_j =\Big[\left(G_{\delta^2}*u_0\right)(x_j) -u_0(x_j)\Big] +\tilde \Delta^{\#}_j\]
is of a slightly different form than that of \eqref{square_brack}, the term in the square brackets does not play any role in the argument of Lemma \ref{lem:a1}.

\subsection{Upper bound in Theorem \ref{thm2} (\lowercase{a})}

The function $\sigma(t,x,u)$ now depends  on the third variable, so the resulting random field is no longer Gaussian.  Therefore, we will need an alternative argument based on an approximation procedure.  For $\beta>0$, we define the following equation,
\begin{equation}\label{truncate}
V^{(\beta)}(t,\,x)=(G_{t}* u_0)(x)+\int_0^{t} \int_{[x-\sqrt{\beta t}, x+\sqrt{\beta t}]}G(t-s, x-y)\sigma\left(s,y,V^{(\beta)}(s,y)\right) W(dsdy).
\end{equation}
Of course, here, we treat $x\pm \sqrt{\beta t} \in \T$. 

Existence and uniqueness of a  solution to the above equation is not an issue. In fact, this can be easily proved by considering the following Picard iterates:
\begin{equation}\label{iterates}
V^{(\beta),l}(t,\,x)=(G_{t}* u_0)(x)+\int_0^{t} \int_{[x-\sqrt{\beta t}, x+\sqrt{\beta t}]}G(t-s, x-y)\sigma\left(s,y,V^{(\beta),l-1}(s,y)\right) W(dsdy),
\end{equation}
with $V^{(\beta), 0}(t,\,x):=(G_{t}* u_0)(x).$ We will need the following result.
\begin{prop}\label{prop:V-Vl}
Assume $\beta t< \frac14$. There exist positive constants $C_1$ and $C_2$ that are independent of $\beta$ and $t$ such that 
\begin{equation*}
\sup_{x\in \T}E\left[| V^{(\beta), l}(t,\,x)-V^{(\beta)}(t,\,x)|^p\right]\leq \left(\frac{C_1\cu^2}{\du^2}\right)^{p/2} e^{C_2\du^4p^3 t} \left(\frac{1}{2}\right)^{lp/2}
%C_1^pe^{C_2p^3 t-\frac{pl}{2}}.
\end{equation*}
\end{prop}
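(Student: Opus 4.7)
The plan is to set up a recursive inequality for
\[
F_l(t) := \sup_{x\in \T}\bigl\|V^{(\beta),l}(t,x) - V^{(\beta)}(t,x)\bigr\|_p,
\]
and then introduce a weighted-in-time norm that converts it into geometric decay in $l$.

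First, I would subtract \eqref{iterates} from \eqref{truncate}, writing the difference as a single stochastic integral with integrand $G(t-s,x-y)\mathbf{1}_{\{|x-y|\le \sqrt{\beta t}\}}[\sigma(s,y,V^{(\beta),l-1})-\sigma(s,y,V^{(\beta)})]$. Applying Burkholder--Davis--Gundy (whose $L^p$ constant scales like $\sqrt p$), followed by Minkowski's integral inequality and the Lipschitz Assumption \ref{assump2}, yields
\[
F_l(t)^2 \le C p \du^2 \int_0^t F_{l-1}(s)^2 \int_{\T} G^2(t-s,z)\, dz\, ds \le C p \du^2 \int_0^t (t-s)^{-1/2} F_{l-1}(s)^2\, ds,
\]
where the last inequality uses the estimate $\int_{\T} G^2(r,z)\, dz \le C/\sqrt r$ implicit in \eqref{eq:var:t_incr}. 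The truncation of the spatial integration to $[x-\sqrt{\beta t}, x+\sqrt{\beta t}]$ only shrinks the integrand, so it may be dropped for an upper bound.

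Second, to convert this Volterra inequality into a contraction I would introduce the weighted quantity $g_l(t) := F_l(t)^2 e^{-\lambda t}$ and use $\int_0^t e^{-\lambda(t-s)}(t-s)^{-1/2}\, ds \le \sqrt{\pi/\lambda}$. Choosing $\lambda = K p^2 \du^4$ with $K$ large enough that $C p \du^2 \sqrt{\pi/\lambda} \le 1/2$ produces
\[
\sup_{s\le t} g_l(s) \le \tfrac12 \sup_{s\le t} g_{l-1}(s),
\]
so that $\sup_{s\le t} g_l(s) \le 2^{-l} \sup_{s\le t} g_0(s)$. For the base case $l=0$, observe that $V^{(\beta),0}(t,x) = (G_t*u_0)(x)$, so $F_0(t)^2$ is exactly the squared $L^p$-norm of the stochastic integral in \eqref{truncate}; by BDG and $|\sigma|\le \cu$, $F_0(t)^2 \le C p \cu^2 \sqrt t$, hence $g_0(t) \le C p \cu^2 \sqrt t\, e^{-\lambda t} \le C p \cu^2/\sqrt\lambda = C \cu^2/(\du^2 \sqrt K)$. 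Combining,
\[
F_l(t)^2 \le \frac{C \cu^2}{\du^2}\, e^{K p^2 \du^4 t}\, 2^{-l},
\]
and raising both sides to the $p/2$ power gives the asserted bound with $C_1 = C$ and $C_2 = K/2$.

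The main obstacle is keeping the $p$-dependence accurate so that the $p^3$ in the exponent comes out right: the $\sqrt p$ from BDG becomes a factor $p$ after squaring into the recursion, and the weight $\lambda$ needed to force contraction must scale like $p^2 \du^4$, so $\lambda t$ becomes $K p^2 \du^4 t$, which after raising to the $p/2$ power produces $e^{C_2 p^3 \du^4 t}$. The hypothesis $\beta t < 1/4$ is not actually used in this moment estimate; it is needed only to ensure that the truncation window $\sqrt{\beta t}$ is smaller than half the torus so that the integral in \eqref{truncate} is unambiguously defined.
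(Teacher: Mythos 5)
Your proposal is correct and follows essentially the same route as the paper: write the difference as a single stochastic integral, apply Burkholder with the $\sqrt p$ constant and the Lipschitz bound on $\sigma$ to get the Volterra-type recursion with kernel $(t-s)^{-1/2}$, then pass to the exponentially weighted norm with weight of order $p^2\du^4$ to obtain contraction by $1/2$ per Picard step, and close with the $l=0$ estimate $F_0(t)^2\le Cp\cu^2\sqrt t$, which after weighting gives the factor $\cu^2/\du^2$. Your accounting of the $p$-dependence (yielding $e^{C\du^4 p^3 t}$ after raising to the power $p/2$) and your remark on the role of $\beta t<\tfrac14$ match the paper's argument.
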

\begin{proof}
 We use \eqref{truncate} and \eqref{iterates} to write 
 \begin{align*}
 &V^{(\beta), l}(t,\,x)-V^{(\beta)}(t,\,x)\\
 &:=\int_0^{t} \int_{[x-\sqrt{\beta t}, x+\sqrt{\beta t}]} G(t-s, x-y)\left[\sigma\left(s,y,V^{(\beta),l-1}(s,y)\right)-\sigma\left(s,y,V^{(\beta)}(s,y)\right)\right] W(dsdy).
 \end{align*}
For notational convenience, we set $f(l,t):=\sup_{x\in \T}\| V^{(\beta), l}(t,\,x)-V^{(\beta)}(t,\,x)\|_p^2$. We now use Burkholder's inequality and the fact that $\sigma$ is globally Lipschitz to obtain
\begin{align*}
f(l,t)&\leq C\du^2p\int_0^{t}f(l-1,s) \left[ \sup_{x\in \T} \int_{[x-\sqrt{\beta t}, x+\sqrt{\beta t}]} G^2(t-s, x-y) dy \right] ds\\
&\leq C\du^2p \int_0^{t}f(l-1,s) \int_{\T} G^2(t-s, y) \, dy \\ 
&\leq C\du^2p\int_0^{t}\frac{f(l-1,s)}{\sqrt{t-s}}d s, 
\end{align*} 
where we have used the heat kernel estimate \eqref{eq:heat-kernel} to get the last bound in the above (here the value of the  constant $C$ changes from  line to line, and is independent of $\beta$ and $t$). Upon setting $F(l):=\sup_{s>0}e^{-ks}f(l,s)$, the above immediately yields 
\begin{equation*}
F(l)\leq \frac{C\du^2p}{\sqrt{k}}F(l-1).
\end{equation*}
Upon choosing $k=C\du^4p^2$ with some large constant $C$ and iterating, we obtain $F(l)\leq C\left(\frac{1}{2}\right)^l.$ This along with 
\[F(0) \le \cu^2\sup_{t\ge 0} e^{-kt} \int_0^t \int_{[x-\sqrt{\beta t} , x+\sqrt{\beta t} ]}G^2(t-s, x-y)\, dy ds \le \frac{C\cu^2}{\du^2} \]
gives the result. 
\end{proof}

We also have the following error estimate on the difference between $u$ and $V^{(\beta)}$.
\begin{prop}\label{prop:u-V}
Assume $\beta t< \frac14$. Then there exist positive constants $C_1$ and $C_2$ that are independent of $\beta$ and $t$ such that  
\begin{equation*}
\sup_{x\in\T}E\left[| u(t,\,x)-V^{(\beta)}(t,\,x)|^p\right]\leq \left(\frac{C_1\cu^2}{\du^2}\right)^{p/2} e^{C_2\du^4p^3 t} e^{-\beta p/4}
\end{equation*}
\end{prop}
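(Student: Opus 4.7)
The plan is to proceed analogously to Proposition \ref{prop:V-Vl} but replace the Picard contraction with a Gronwall-type bound, and extract the exponential factor $e^{-\beta/4}$ from the truncation of the spatial integration. Write
\[ u(t,x) - V^{(\beta)}(t,x) = I_1(t,x) + I_2(t,x), \]
where $I_1(t,x)$ is the stochastic integral over $[0,t] \times [x-\sqrt{\beta t}, x+\sqrt{\beta t}]$ of $G(t-s, x-y)\bigl[\sigma(s,y,u(s,y)) - \sigma(s,y,V^{(\beta)}(s,y))\bigr]$, and $I_2(t,x)$ is the integral of $G(t-s,x-y)\sigma(s,y,u(s,y))$ over the complementary region $[0,t] \times \bigl(\T \setminus [x-\sqrt{\beta t}, x+\sqrt{\beta t}]\bigr)$. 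Informally, the Lipschitz piece $I_1$ supplies the self-consistent Gronwall feedback, while the truncation remainder $I_2$ supplies the exponential factor.

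For $I_2$ I would apply Burkholder's inequality, the uniform upper bound $|\sigma| \le \cu$, and the heat-kernel estimate $G(r,z) \le C_0 p(r, z_*)$ from Lemma \ref{lem:heat-kernel}. Under the standing assumption $\sqrt{\beta t} < 1/2$, the exterior region on the torus corresponds to $|z_*| > \sqrt{\beta t}$, and a standard Gaussian tail bound (using $e^{-w^2} \le e^{-w^2/2} e^{-r^2/2}$ for $w \ge r$) gives
\[ \int_{|z_*| > \sqrt{\beta t}} p^2(t-s, z)\, dz \le \frac{C}{\sqrt{t-s}} \exp\left(-\frac{\beta t}{2(t-s)}\right) \le \frac{C e^{-\beta/2}}{\sqrt{t-s}}, \]
since $t - s \le t$. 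Integrating in $s$ yields $\|I_2(t,x)\|_p^2 \le C p \cu^2 \sqrt{t}\, e^{-\beta/2}$.

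For $I_1$, Burkholder's inequality together with the Lipschitz bound on $\sigma$ and the heat-kernel estimate \eqref{eq:heat-kernel} produce
\[ \|I_1(t,x)\|_p^2 \le C p \du^2 \int_0^t \frac{g(s)}{\sqrt{t-s}}\, ds, \]
where $g(s) := \sup_{y \in \T} \|u(s,y) - V^{(\beta)}(s,y)\|_p^2$. Combining the two bounds gives
\[ g(t) \le C p \cu^2 \sqrt{t}\, e^{-\beta/2} + C p \du^2 \int_0^t \frac{g(s)}{\sqrt{t-s}}\, ds. \]
Setting $F(k) := \sup_{t > 0} e^{-kt} g(t)$ and choosing $k = C'\du^4 p^2$ large enough so the self-term can be absorbed (exactly the exponential weighting used in Proposition \ref{prop:V-Vl}) gives $F(k) \le C\, \cu^2 \du^{-2}\, e^{-\beta/2}$, hence $g(t) \le C\, \cu^2 \du^{-2}\, e^{C' \du^4 p^2 t}\, e^{-\beta/2}$. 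Raising to the $(p/2)$-th power produces the claimed inequality.

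The main obstacle I anticipate is an a priori guarantee that $g(t) < \infty$ so the Gronwall step is rigorously closeable; this can be handled by first establishing uniform-in-$x$ $L^p$ moment bounds for $u$ and $V^{(\beta)}$ via the same arguments as in the proof of Proposition \ref{prop:V-Vl} on the time interval $[0, 1/(4\beta))$. A smaller technical point is ensuring the periodic geometry of $\T$ does not spoil the Gaussian tail, which is safeguarded by the hypothesis $\beta t < 1/4$ that keeps the truncation interval strictly inside a fundamental domain.
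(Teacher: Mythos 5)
Your proposal is correct and follows essentially the same route as the paper: the same decomposition into the Lipschitz term over $[x-\sqrt{\beta t}, x+\sqrt{\beta t}]$ and the truncation remainder over its complement, Burkholder plus the heat-kernel bound to extract $e^{-\beta/2}$ from the exterior integral, and the exponentially weighted norm $F(k)=\sup_t e^{-kt}\sup_x\|u(t,x)-V^{(\beta)}(t,x)\|_p^2$ with $k= C\du^4p^2$ to close the Gronwall-type estimate. Your added care about the a priori finiteness of $F(k)$ and the explicit Gaussian tail computation are fine refinements of details the paper leaves implicit.
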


\begin{proof}
We use \eqref{truncate} and the mild formulation of the $u(t,x)$ to write 
\begin{align*}
&u(t,x)-V^{(\beta)}(t,\,x)\\
&=\int_0^t\int_{[x-\sqrt{\beta t}, x+\sqrt{\beta t}]} G(t-s, x-y)\left[\sigma\left(s,y,u(s,y)\right)-\sigma\left(s,y,V^{(\beta)}(s,y)\right)\right] W(dsdy)\\
&\quad+\int_0^t\int_{[x-\sqrt{\beta t}, x+\sqrt{\beta t}]^c} G(t-s, x-y)\cdot\sigma\left(s,y,u(s,y)\right)W(dsdy).
\end{align*}
We now use Burkholder's inequality together with the Lipschitz continuity of $\sigma$ to write
\begin{align*}
\|u(t,x) & -V^{(\beta)}(t,\,x)\|_p^2\\
&\leq C\du^2 p\int_0^t\int_{[x-\sqrt{\beta t}, x+\sqrt{\beta t}]} G^2(t-s, x-y)\|u(s,y)-V^{(\beta)}(s,y)\|^2_p dsdy\\
&\quad+Cp\int_0^t\int_{[x-\sqrt{\beta t}, x+\sqrt{\beta t}]^c} G^2(t-s, x-y)\|\sigma\left(s,y,u(s,y\right)\|_p^2 dsdy\\
&:=I_1+I_2.
\end{align*}
We bound $I_2$ first. We now use the heat kernel estimate \eqref{eq:heat-kernel} and the fact that $\sigma$ is bounded above to get that  
\begin{align*}
I_2&\leq C\cu^2pe^{-\beta/2}\int_0^t\frac{1}{\sqrt{t-s}}d s\\
&\leq C\cu^2 pe^{-\beta/2}\sqrt t.
\end{align*}
We now set $F(k):=\sup_{t>0,\,x\in\T}e^{-kt}\|u(t,x)-V^{(\beta)}(t,\,x)\|_p^2$ and bound $I_1$ as in the Proposition above to obtain
\begin{equation*}
F(k)\leq \frac{C\du^2p}{\sqrt{k}}F(k)+ \frac{C\cu^2 p}{\sqrt{k}}e^{-\beta/2}.
\end{equation*}
This finishes the proof upon choosing the $k=C\du^4 p^2$ for a large constant $C$.
\end{proof}
We will use the following straightforward consequence of the above: 
\begin{equation}\label{b}
\sup_{x\in\T}E\left[| u(t,\,x)-V^{(\beta),l}(t,\,x)|^p\right]\leq \left(\frac{D_1\cu^2}{\du^2}\right)^{p/2} e^{D_2\du^4p^3 t}\left[ e^{-\beta p/4} +\left(\frac{1}{2}\right)^{lp/2}
\right]
%D_1e^{D_2 p^3t}\left( e^{-\frac{\beta p}{4}}+e^{- \frac{pl}{2}}\right),
\end{equation}
where $D_1$ and $D_2$ are some positive constants. The following lemma along with \eqref{b} suggests that we can construct independent random variables that are close to $u(t, x)$. The proof of Lemma \ref{lemma:ind} is essentially the same as that of Lemma 4.4 of \cite{conu-jose-khos}. 

\begin{lem}\label{lemma:ind}
Let $\beta, t>0$ and $l\geq0$. Fix a collection of points $x_1,x_2,\cdots \in \T$ such that the distance between $x_i$ and $x_j$ is greater than $2l\sqrt{\beta t}$ whenever $i\not=j$. Then $\{V^{(\beta),l}(t,\,x_j)\}$ forms a collection of independent random variables.
\end{lem}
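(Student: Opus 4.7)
The plan is to prove that each Picard iterate $V^{(\beta),l}(t,x)$ depends on the white noise only through a spatial window of radius at most $l\sqrt{\beta t}$ around $x$, and then conclude that the required points are far enough apart that these windows are disjoint.

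More precisely, for every $l \ge 0$, $t > 0$, and $x \in \T$, let $I_l(t,x) := [x - l\sqrt{\beta t},\, x + l\sqrt{\beta t}] \subset \T$ (viewed as an arc on the torus), and let $\mathcal{F}_{l,t,x}$ denote the $\sigma$-algebra generated by the white noise $W$ restricted to $[0,t] \times I_l(t,x)$. I would establish by induction on $l$ the claim that $V^{(\beta),l}(t,x)$ is $\mathcal{F}_{l,t,x}$-measurable. The base case $l=0$ is immediate since $V^{(\beta),0}(t,x) = (G_t * u_0)(x)$ is deterministic. For the inductive step, suppose the claim holds for $l-1$. Referring to the Picard recursion \eqref{iterates},
\begin{equation*}
V^{(\beta),l}(t,x)=(G_{t}* u_0)(x)+\int_0^{t} \int_{I_1(t,x)}G(t-s, x-y)\sigma\bigl(s,y,V^{(\beta),l-1}(s,y)\bigr) W(dsdy),
\end{equation*}
the stochastic integrand is built from values $V^{(\beta),l-1}(s,y)$ with $s \le t$ and $y \in I_1(t,x)$. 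By the inductive hypothesis, each such value is measurable with respect to noise on $[0,s] \times I_{l-1}(s,y) \subseteq [0,t] \times I_{l-1}(t,y)$. Since $y \in I_1(t,x)$ implies $I_{l-1}(t,y) \subseteq I_l(t,x)$ by the triangle inequality, the integrand is $\mathcal{F}_{l,t,x}$-measurable; the stochastic integral itself is also $\mathcal{F}_{l,t,x}$-measurable because the domain of integration lies in $[0,t] \times I_1(t,x) \subseteq [0,t] \times I_l(t,x)$. This closes the induction.

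With this cone-of-dependence property in hand, the conclusion is almost immediate. For the collection $\{x_j\}$ in the statement, the assumption that the pairwise distances exceed $2l\sqrt{\beta t}$ guarantees that the arcs $I_l(t,x_j)$ are pairwise disjoint. Since space-time white noise assigns independent Gaussian masses to disjoint Borel sets, the restrictions of $W$ to $[0,t] \times I_l(t,x_j)$ for distinct $j$ are mutually independent. By the $\mathcal{F}_{l,t,x_j}$-measurability established above, the random variables $V^{(\beta),l}(t,x_j)$ are functions of these mutually independent pieces, and hence are mutually independent.

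The only subtle point, and the one I would pay the most attention to in the write-up, is that we are on the torus rather than on $\R$, so all window arithmetic must be done modulo $1$. One has to verify that $2l\sqrt{\beta t}$ separation on $\T$ (in the geodesic sense) really does force the arcs $I_l(t,x_j)$ to be pairwise disjoint, and that the containment $I_{l-1}(t,y) \subseteq I_l(t,x)$ used in the inductive step still holds when arcs may wrap around $\T$. Provided $l\sqrt{\beta t} < 1/2$ (which is guaranteed once finitely many points are to be separated on $\T$ by $2l\sqrt{\beta t}$), the arcs remain genuine intervals and the arithmetic is identical to the Euclidean case.
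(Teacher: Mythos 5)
Your proof is correct and is exactly the argument the paper has in mind: the paper simply cites Lemma 4.4 of Conus--Joseph--Khoshnevisan, whose proof is the same induction on $l$ showing that each truncated Picard iterate is measurable with respect to the noise on the cone $[0,t]\times[x-l\sqrt{\beta t},\,x+l\sqrt{\beta t}]$, followed by independence of the white noise over disjoint sets. Your extra check of the torus wrap-around (automatic once the separation hypothesis holds for at least two points on $\T$) is fine and does not change the argument.
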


We  can now prove the upper bound. 
%\begin{proof}[Proof of the upper bound in Theorem \ref{thm2}]
Recall  $\delta=\epsilon^{\frac 1\theta}$ and the time points $t_i:=i \delta^2$ as in \eqref{eq:t:x}; we have chosen $c_0=1$. We shall consider now the spatial points $x_{2j}:=j(\delta+\rho)$ and $x_{2j-1}:=j(\delta+\rho)-\rho$ for $j=1, \dots, J$ where $J:=[1/2(\delta+\rho)]$ and $\rho:= 2|\alpha\log \epsilon|^{\frac3 2}\delta$. Here $\alpha>4(D_2\du^4+1)+16\theta$ is a constant which is independent of $\epsilon$ and  $i, j$ where $D_2$ is in \eqref{b}. From this definition, we have $|x_{2j+1} - x_{2j}| = \delta$ and $|x_{2j+2}-x_{2j+1}| =\rho$ for $j=0, \dots, J$.  As in the proof of the upper bound for the Gaussian case, we have 

\begin{equation*} \begin{split}P \left(\sup_{\stackrel{x\ne y\in \T}{0\le t\le T}} \frac{|u(t,x)-u(t,y)|}{|x-y|^{\frac12-\theta}} \le \epsilon\right) & \le P\left(\max_{\stackrel{i=0,1,\cdots, I}{j=0,1\cdots J-1}} \frac{\big|u(t_i, x_{2j+1}) -u(t_i, x_{2j})\big|}{\delta^{\frac12-\theta}}\le \epsilon\right) \\
& \le P\left(\max_{\stackrel{i=0,1,\cdots, I}{j=0,1\cdots J}} \big|u(t_i, x_{2j}+\delta) -u(t_i, x_{2j})\big|\le \epsilon^{\frac{1}{2\theta}}\right).
\end{split}
\end{equation*}
We will show below that {\it uniformly} over initial profiles $u_0$ (see Remark \ref{rem:markov})% with $\HO^{(\theta)}(u_0)\le \epsilon$ one has 
\begin{equation}\label{eq:prob-main}
 P \left(\max_{j=0,1,\cdots, J} \big|u(t_1, x_{2j}+\delta)- u(t_1, x_{2j})\big|\le \epsilon^{\frac{1}{2\theta}}\right) \le \exp\left(-\frac{C}{|\log \epsilon|^{\frac32} \epsilon^{\frac1\theta}}\right)
 \end{equation}
for some positive constant $C$. One then uses \eqref{eq:ubd:dis} and the Markov property, and notes that the number of time intervals $I=\left[\frac{T}{\delta^2}\right]=\left[ \frac{T}{\epsilon^{2/\theta}}\right]$ to get the required upper bound.

Let us therefore turn to the proof of \eqref{eq:prob-main}. Using the triangle inequality, the left hand side of \eqref{eq:prob-main} is bounded above by
\begin{align*}
& 2 P\left( \max_{j=0,1,\cdots, 2J} \left| u(t_1, x_{j}) - V^{(\beta),l}(t_1, x_j)  \right|  > \epsilon^{1/2\theta} \right) \\
&\qquad \qquad + P\left( \max_{j=0,1,\cdots, J} \left| V^{(\beta),l}(t_1, x_{2j+1}) - V^{(\beta),l}(t_1, x_{2j})  \right|  \leq 3\epsilon^{1/2\theta} \right)  \\
&:= L_1 + L_2. 
\end{align*} 
Before we consider $L_1$ and $L_2$, we  define 
\begin{equation}\label{eq:beta}
 \beta=l:=\left\lfloor\alpha|\log \epsilon|\right\rfloor \quad \text{and} \quad p:=\left\lfloor\sqrt{|\log \epsilon| / \delta^2}\right\rfloor.
 \end{equation}
Let us now consider $L_1$ first. By Chebyshev's inequality and \eqref{b} there exist constants $C_1>0$ and $C_2>0$ which are independent of $\epsilon$ such that  
\begin{equation}\label{eq:u-v1}
\sup_{x\in \T} P \left( \left|u(t_1, x) - V^{(\beta),l} (t_1, x) \right|  \geq   \epsilon^{1/2\theta} \right) \leq \epsilon^{-p/2\theta}\left(\frac{D_1\cu^2}{\du^2}\right)^{p/2} e^{D_2\du^4p^3 t}\left[ e^{-\beta p/4} +\left(\frac{1}{2}\right)^{lp/2}
\right] 
%C_1 \exp\left( -  \frac{C_2|\log \epsilon|^{3/2}}{\epsilon^{1/\theta}}\right). 
\end{equation}
%Therefore, using this, we have 
%\[ A_1 \leq 2J \sup_{x\in \T}P \left( \left|u(t_1, x) - V^{(\beta),l} (t_1, x) \right|  \geq  \gamma \epsilon^{1/2\theta} \right). 
%\]
Since $2J\leq 1/(\delta+\rho)\leq 1/\delta=\epsilon^{-1/\theta}$, we have for some other positive constants $\tilde C_1$ and $\tilde C_2$ independent of $\epsilon$ 
\begin{equation} \label{eq:u-v1:2} L_1\leq   2J \sup_{x\in \T}P \left( \left|u(t_1, x) - V^{(\beta),l} (t_1, x) \right|  \geq  \epsilon^{1/2\theta} \right) \le \tilde C_1 \exp\left( -  \frac{\tilde C_2 \du^4|\log \epsilon|^{3/2}}{\epsilon^{1/\theta}}\right).\end{equation} %\leq C_1 \exp\left( -  \frac{C_2}{\epsilon^{1/\theta}|\log \epsilon|^{3/2}}\right).\]
Let us now consider $L_2$. First observe that $W_j:=\left(V^{(\beta),l}(t_1, x_{2j}), V^{(\beta),l}(t_1, x_{2j+1})  \right)$ are independent for $j=0, 1, \dots, J-1$  by Lemma \ref{lemma:ind} since the distance between $x_{2j+1}$ and $x_{2j+2}$ is greater than $2l^{3/2}\sqrt{t_1}$. Thus, we have 
\begin{align*}
L_2=&P\left( \max_{j=0,1,\cdots, J} \left| V^{(\beta),l}(t_1, x_{2j+1}) - V^{(\beta),l}(t_1, x_{2j})  \right|  \leq 3\epsilon^{1/2\theta} \right)\\
 &= \prod_{j=1}^J P\left( \left| V^{(\beta),l}(t_1, x_{2j+1}) - V^{(\beta),l}(t_1, x_{2j})  \right|  \leq 3\epsilon^{1/2\theta} \right).
\end{align*}
Using the triangle inequality, we have 
\begin{equation}\label{a2}
\begin{split}
&P\left( \left| V^{(\beta),l}(t_1, x_{2j+1}) - V^{(\beta),l}(t_1, x_{2j})  \right|  \leq 3\epsilon^{1/2\theta} \right) \\
&\leq  2 \max_{0\leq j\leq 2J} P\left( \left| u(t_1, x_{j}) - V^{(\beta),l}(t_1, x_j)  \right|  > \epsilon^{1/2\theta} \right)  + P\left( \left| u(t_1, x_{2j+1}) - u(t_1, x_{2j})  \right|  \leq 5\epsilon^{1/2\theta} \right)\\
&=:L_{21}+L_{22}.
\end{split}
\end{equation}
Let us first consider $L_{22}$. Consider the following martingale $M_s$ for  $0\le s\le t_1$:
\begin{align*}
M_s &= \left[\left(G_{t_1}*u_0\right)(x_{2j+1}) - \left(G_{t_1}*u_0\right) (x_{2j})\right] \\
&\qquad  + \int_0^s \int_{\T} \left[G(t_1-r, x_{2j+1}- y) - G(t_1-r, x_{2j}-y)\right] \cdot \sigma\left(r, y, u(r,y)\right) W(dy dr).
\end{align*}
Note that $M_{t_1}$ is $u(t_1, x_{2j+1}) - u(t_1, x_{2j})$. The quadratic variation of the martingale is given by 
\[ \langle M\rangle_s=\int_0^s \int_{\T} \left[G(t_1-r, \,x_{2j+1}- y) - G(t_1-r, x_{2j}-y)\right]^2 \sigma\left(r, y, u(r,y)\right)^2 dy dr.\]
We use \eqref{eq:var:bd} to obtain 
\[ C_0\cl^2\delta \le \langle M\rangle_{t_1} \le C_1\cu^2\delta.\]
Since $M_t$ is a continuous martingale, it is a time change of a Brownian motion $B$, i.e.,  $M_t= M_0 + B_{\langle M\rangle_t}$.
Hence, recalling $\delta=\epsilon^{1/\theta}$, we have  
\begin{equation}\label{eq:diff-u-x}
\begin{split} P\left( \left| u(t_1, x_{2j+1}) - u(t_1, x_{2j})  \right|  \leq 5\epsilon^{1/2\theta} \right) & \le P\left(\left| M_0 + B_{\langle M\rangle_{t_1}}\right| \le 5\epsilon^{1/2\theta}\right) \\
 &\le P\left(\inf_{C_0\cl^2\delta \le t\le C_1\cu^2\delta}\left|M_0+ B_t\right|\le 5\sqrt\delta\right)  \\
 &\le P\left(\inf_{C_0\cl^2\delta \le t\le C_1\cu^2\delta}\left|B_t\right|\le 5\sqrt\delta\right)=:\gamma,
\end{split}
\end{equation}
for some $\gamma<1$ independent of $\delta$ (but dependent on $\cl, \cu$). The last inequality can be obtained by a coupling argument 
as follows: Let $B^{M_0}$ be a Brownian motion starting at $M_0$ independent of a standard Brownian motion $B$ starting at $0$. Now let $X$ be the process which follows the trajectory of $B^{M_0}$ till it hits either $B$ or $
-B$,
 after which it follows the trajectory of $B$ or $-B
 $ (depending on which one it hits). Clearly $X$ has the same distribution as $B^{M_0}$. If $\inf_{C_0\cl^2\delta\le t\le C_1\cu^2\delta} |X_t
 |<5\sqrt \delta$ then both the events
  $\left\{
  \inf_{C_0\cl^2\delta\le t\le C_1\cu^2\delta} |B
  _t|>5\sqrt \delta\right\}$
  and $\left\{
  \inf_{C_0\cl^2\delta\le t\le C_1\cu^2\delta} |-B
  _t|>5\sqrt \delta\right\}$ cannot occur simulataneously
  since then $B^{M_0}$ would have hit $B$ or $-B$ before entering the strip $[-5\sqrt \delta, 5\sqrt \delta]$. Thus
 \begin{align*}
 \left\{
  \inf_{C_0\cl^2\delta\le t\le C_1\cu^2\delta} |X_t|\le 5\sqrt \delta\right\}&\subset
   \left\{
  \inf_{C_0\cl^2\delta\le t\le C_1\cu^2\delta} |B
  _t|\le 5\sqrt \delta\right\} \bigcup \left\{
  \inf_{C_0\cl^2\delta\le t\le C_1\cu^2\delta} |-B_
  t|\le 5\sqrt \delta\right\} \\
  &= \left\{
  \inf_{C_0\cl^2\delta\le t\le C_1\cu^2\delta} |B_
  t|\le 5\sqrt \delta\right\}
  \end{align*}

%We consider a countable dense subset $D:=\{t_0, t_1, \dots\}$ of $[C_0\delta, C_1\delta]$ ($t_i\neq t_j$ whenever $i\neq j$) and $D_m:=\{t_0, \dots, t_m\}$. Since $\{B_{t_0}, \dots B_{t_m} \}$ is a non-degenerate Gaussian vector, for any non-random number $A\in \R$ and $\eta>0$, we have 
%\[ P\left( \min_{0\leq i\leq m} |A+B_{t_i}|  \leq  \eta  \right) \leq P \left( \min_{0\leq i\leq m} |B_{t_i}|  \leq  \eta \right).\] 
%Since this holds for all $m\geq 1$, we get  the last inequality above. 

Let us now consider  $L_{21}$.  Here, \eqref{eq:u-v1:2} implies $L_{21}$ can be made arbitrarily small by choosing $\epsilon$ small enough. Therefore, there exists a constant $\eta<1$ independent of $\epsilon$ such that
\[ L_{21}+L_{22} \leq \eta <1,\]
which implies from \eqref{a2}
\[ L_{2} \leq \eta^J\leq \exp \left( - \frac{C}{\epsilon^{1/\theta} |\log \epsilon|^{3/2} }  \right).\]
Combining our bounds on $L_1$ and $L_2$, we  finish the proof.  \qed
%\end{proof}

\subsection{Upper bound in Theorem \ref{thm2} (\lowercase{b})} 
The proof follows a similar strategy to that of the upper bound proved above and  we will sketch the proof focusing on the main differences. Note that we use  the same choice of $\beta$ and $l$ as in part (a) (see \eqref{eq:beta}).   Then, we have 
\begin{equation*} P \left(\sup_{x\in \T}\HS^{(\theta)}_x(u) \le \epsilon\right) 
 \le P\left(\max_{\stackrel{i=0,1,\cdots, I}{j=0,1\cdots J}} \big|u(t_i+\delta^2, x_j) -u(t_i, x_j)\big|\le \epsilon^{\frac{1}{2\theta}}\right),
\end{equation*} 
where the points $t_i=\delta^2$ while $x_j= 4j |\alpha\log \epsilon|^{3/2}\delta,\; j=0,\,1,\, \cdots, J:= \left[\frac{1}{c_1|\log \epsilon|\delta}\right]$. Here we choose $\alpha$ as in part (a) such that $|x_i-x_j| \geq 2\ell \sqrt{\beta(t_1+\delta^2)}$. In other words, by our choices of $x_j$, $\left\{V^{(\beta, l)}(t_1+\delta^2, x_j) -V^{(\beta),l}(t_1, x_j)\right\}_{j=0}^{J}$
is a collection of independent random variables. Now  we have 
\begin{align*}
P\left(\big|u(t_1+\delta^2, x_j) -u(t_1, x_j)\big|\le \epsilon^{\frac{1}{2\theta}}\right)&\leq P\left(\big|u(t_1+\delta^2, x_j) -V^{(\beta),l}(t_1+\delta^2, x_j)\big|\ge \epsilon^{\frac{1}{2\theta}}\right)\\
&+P\left(\big|u(t_1, x_j) -V^{(\beta),l}(t_1, x_j)\big|\ge \epsilon^{\frac{1}{2\theta}}\right)\\
&+P\left(\big|V^{(\beta), l}(t_1+\delta^2, x_j) -V^{(\beta),l}(t_1, x_j)\big|\le 3\epsilon^{\frac{1}{2\theta}}\right).
\end{align*}
For the first two terms,  we have similar upper bounds as the one given by \eqref{eq:u-v1};
\begin{align*}
P\left(\big|u(t_1+\delta^2, x_j) -V^{(\beta),l}(t_1+\delta^2, x_j)\big|\ge \epsilon^{\frac{1}{2\theta}}\right)&+P\left(\big|u(t_1, x_j) -V^{(\beta),l}(t_1, x_j)\big|\ge \epsilon^{\frac{1}{2\theta}}\right)\\
&\leq C_1 \exp\left( -  \frac{C_2|\log \epsilon|^{3/2}}{\epsilon^{1/\theta}}\right),
\end{align*}
for some positive constants $C_1$ and $C_2$. For the final term, we have
\begin{align*}
P\left(\big|V^{(\beta), l}(t_1+\delta^2, x_j) -V^{(\beta),l}(t_1, x_j)\big|\le 3\epsilon^{\frac{1}{2\theta}}\right)&\leq P\left(\big|V^{(\beta), l}(t_1+\delta^2, x_j) -u(t_1+\delta^2, x_j)\big|\ge \epsilon^{\frac{1}{2\theta}}\right)\\
&\;+P\left(\big|u(t_1, x_j) -V^{(\beta), l}(t_1, x_j)\big|\ge \epsilon^{\frac{1}{2\theta}}\right)\\
&\;+P\left(\big|u(t_1+\delta^2, x_j) -u(t_1, x_j)\big|\le 5\epsilon^{\frac{1}{2\theta}}\right).
\end{align*}
The bound for the last term is similar to the bound given by \eqref{eq:diff-u-x}.  The martingale term is slightly different. For 
$0\le s\le t_1+\delta^2$
\begin{align*}
M_s &= \left[\left(G_{t_1+\delta^2}*u_0\right)(x_{j}) - \left(G_{t_1}*u_0\right) (x_{j})\right] \\
&\qquad  + \int_0^s \int_{\T} \left[G(t_1+\delta^2-r, x_{j}- y) - G(t_1-r, x_{j} - y)1_{r\leq t_1}\right] \cdot \sigma\left(r, y, u(r,y)\right) W(dy dr).
\end{align*}
We now use \eqref{eq:var:t_incr} and \eqref{eq:var:t:g-g} to show that there exist constants $C_3$ and $C_4$ such that  
\[ C_3\cl^2\delta \le \langle M\rangle_{t_1+\delta^2} \le C_4\cu^2\delta.\]
A similar argument to that of \eqref{eq:diff-u-x} shows that 
\begin{equation*}
P\left(\big|u(t_1+\delta^2, x_j) -u(t_1, x_j)\big|\le 5\epsilon^{\frac{1}{2\theta}}\right)\leq \gamma,
\end{equation*}
where $\gamma<1.$ The proof now follows from part (a).
\section{Lower bounds}\label{sec:lbd}

\subsection{Lower bound in Theorem \ref{thm1} (\lowercase{a})}\label{sec:lb:Gaussian}
%Recall that 
%\[ \mathcal H_t^{(\theta)}(u) = \sup_{x\ne y \in \T} \frac{\left|u(t,x) - u(t,y)\right|}{|x-y|^{\frac12-\theta}}.\]
%Although not stated explicitly, $\mathcal H_t$ depends on $\theta$ also. 
Recall our time discretizations from \eqref{eq:t:x}: $t_i=ic_0\delta^2= ic_0 \epsilon^{\frac2\theta},\, i=0,1,\cdots, I$, and consider now the events
\be\label{eq:B} B_i= U_i \cap H_i,\ee
where the event $U_i$  puts restriction on the supremum norm of $u(t,\cdot)$ in the time interval $[t_i, t_{i+1}]$:
\be \label{eq:U} U_i =  \left\{\sup_{x\in \T} \left|u(t_{i+1}, x)\right| \le \frac{ \epsilon^{\frac{1}{2\theta}}}{6} , \text{ and }\sup_{x\in \T} \left|u(t,x)\right|\le \frac{ 2\epsilon^{\frac{1}{2\theta}}}{3} \text{ for all } t\in [t_i, t_{i+1}]\right\},\ee
and the event $H_i$ puts restriction on the H\"older norm of $u$ in the time interval $[t_i, t_{i+1}]$:
\be\label{eq:H} H_i=\left\{\mathcal{H}_{t_{i+1}}^{(\theta)}(u)\le \frac\epsilon 6, \text{ and } \mathcal H_t^{(\theta)}(u)\le \frac{2\epsilon}{3} \text{ for all } t\in [t_i, t_{i+1}]\right\}.\ee 
%\[ A_i = \left\{\begin{array}{c}|u(t_{i+1}, x)| \le \frac1 6 \epsilon^{\frac{1}{2\theta}} \text{ for all } x\in \T,  \; \mathcal{H}_{t_{i+1}}\le \frac\epsilon 6,\\ \text{and}\\  |u(t,x)|\le \frac{2}{3} \epsilon^{\frac{1}{2\theta}} \text{ for all } t\in I_n,\, x\in \T,  \text{ and } \mathcal H_t\le \frac{2\epsilon}{3} \; \forall t\in I_n\end{array}\right\}.  \] 
It is clear that 
\be \label{eq:lbd:dis} \begin{split}
P \left(\sup_{0\le t\le T} \mathcal H_t^{(\theta)}(u)\le \epsilon \right)&\ge P \left(\sup_{0\le t\le T} \mathcal H_t^{(\theta)}(u)\le \epsilon,\, \sup_{\stackrel{0\le t\le T}{x\in[0,\,1]}}|u(t,\,x)|\leq \epsilon^{\frac{1}{2\theta}}\right)\\
 & \ge P \left( \cap_{i=0}^{I-1} B_i\right) \\
&= \prod_{i=0}^{I-1} P\left(B_i \big \vert B_0, B_1\cdots B_{i-1} \right).
\end{split}
\ee
Similar to the method of the upper bound, our main task will be to obtain a uniform lower bound on $ P\left(B_i \big \vert B_0, B_1\cdots B_{i-1} \right)$. It turns out that with an appropriate choice of $c_0$ one can in fact obtain such a uniform lower bound. We do this in Lemma \ref{lem:B0} below (see also Remark \ref{rem:B0}), and then the lower bound in Theorem \ref{thm1} (a) follows immediately. We first  need a couple of lemmas which we turn to next. 
 
  \begin{lem}\label{lem:ho:lbd}   There exists a constant $\mathbf K_5$ dependent only on $\theta$ and $\alpha_0 = \frac{C(\theta)}{\cu^{2/\theta}}>0$ such that for $\alpha<\alpha_0$ we have
   \be \label{eq:ho:lbd} P \left(\sup_{t\le \alpha\epsilon^{\frac 2\theta},\;x\ne y \in \T} \big\vert \widetilde N(t,x,y) \big \vert \le \epsilon\right) \ge  \exp\left(- \frac{2}{ \alpha^{\frac12}\epsilon^{\frac1\theta}}\exp\left(-\frac{\mathbf K_5}{\cu^2\alpha^\theta}\right)\right). \ee
 \end{lem}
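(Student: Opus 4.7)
The plan is to decompose $\T$ into short arcs, rewrite the event $\{\sup|\widetilde N|\le\epsilon\}$ as (a superset of) an intersection of convex symmetric events localized to these arcs, and then apply the Gaussian correlation inequality (GCI) of \cite{roye} to obtain a product lower bound. Set $\ell := \sqrt{\alpha}\,\epsilon^{1/\theta}$ and $M := \lceil 1/\ell\rceil$; partition $\T$ into consecutive arcs $J_k$ of length $\ell$, and let $\hat J_k := J_k\cup J_{(k+1)\bmod M}$. For each $k$, introduce the events
\begin{align*}
A_k &:= \Bigl\{\sup_{t\le \alpha\epsilon^{2/\theta},\ x\ne y\in \hat J_k} |\widetilde N(t,x,y)| \le \epsilon \Bigr\},\\
B_k &:= \Bigl\{\sup_{t\le \alpha\epsilon^{2/\theta},\ x\in J_k} |N(t,x)| \le \tfrac12\,\ell^{\,1/2-\theta}\,\epsilon \Bigr\}.
\end{align*}
Since $N$ and $\widetilde N$ depend linearly and continuously on $\dot W$, and the suprema are realized along countable dense nets, each $A_k$ and $B_k$ is a convex symmetric event in the associated Gauss space.

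A simple case split establishes the inclusion $\bigcap_k A_k\cap \bigcap_k B_k\subseteq \{\sup_{t,x\ne y\in\T}|\widetilde N|\le\epsilon\}$. Indeed, if $x,y$ lie in the same or cyclically-adjacent arcs they are both contained in some $\hat J_k$, and $A_k$ directly yields the bound; otherwise the cyclic distance satisfies $|x-y|\ge \ell$, so with $x\in J_k,\,y\in J_l$ and on $B_k\cap B_l$ one has
\[
|\widetilde N(t,x,y)| \le \frac{|N(t,x)|+|N(t,y)|}{|x-y|^{1/2-\theta}} \le \frac{\ell^{\,1/2-\theta}\epsilon}{\ell^{\,1/2-\theta}} = \epsilon.
\]
The GCI then yields
\[
P\Bigl(\sup_{t,x\ne y\in\T}|\widetilde N(t,x,y)|\le \epsilon\Bigr) \;\ge\; P\Bigl(\bigcap_k A_k\cap \bigcap_k B_k\Bigr) \;\ge\; \prod_k P(A_k)\prod_k P(B_k).
\]

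For the tail estimates I would apply Lemma \ref{lem:Nt:tail} with $\lambda=1$ on $\hat J_k$ (of length $2\ell$), and Lemma \ref{lem:eq:N:tail} with $\lambda=\tfrac12\alpha^{(1-2\theta)/4}$ on $J_k$; the latter choice is forced by $\lambda\epsilon^{1/(2\theta)}=\tfrac12\ell^{\,1/2-\theta}\epsilon$ and produces $\lambda^2/\sqrt\alpha = \frac{1}{4\alpha^\theta}$, thereby giving an exponent of the desired form. Both estimates yield $P(A_k^c),P(B_k^c)\le C\exp\bigl(-\tilde K/(\cu^{2}\alpha^{\theta})\bigr)$ uniformly in $k$, after absorbing the $1/\sqrt\alpha$ prefactors into a slightly smaller constant in the exponent --- permissible under the standing assumption $\alpha<\alpha_0:=C(\theta)/\cu^{\,2/\theta}$, since then $\alpha^{-\theta}$ dominates $\log(1/\alpha)$. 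Choosing $\alpha_0$ small enough to force $P(A_k^c)+P(B_k^c)\le 1/2$ and using $1-p\ge e^{-2p}$ gives
\[
\prod_k P(A_k)\prod_k P(B_k) \;\ge\; \exp\!\left(-2M\bigl(P(A_k^c)+P(B_k^c)\bigr)\right) \;\ge\; \exp\!\left(-\frac{2}{\sqrt\alpha\,\epsilon^{1/\theta}}\exp\!\left(-\frac{\mathbf K_5}{\cu^{2}\alpha^{\theta}}\right)\right)
\]
after a final relabeling of constants. The principal obstacle is establishing the two tail bounds with exponent $\alpha^{-\theta}/\cu^{2}$ on the short spatial intervals $J_k,\hat J_k$ of length comparable to $\sqrt\alpha\epsilon^{1/\theta}$ (rather than $\epsilon^{1/\theta}$ as stated in Lemmas \ref{lem:Nt:tail}--\ref{lem:eq:N:tail}); the refinement is implicit in the proof of Lemma \ref{lem:Nt:tail} for $\alpha<1$, which already reduces to blocks of exactly that size, and an analogous dyadic-chaining argument handles Lemma \ref{lem:eq:N:tail}.
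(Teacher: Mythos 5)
Your proof is correct and follows essentially the same route as the paper: decompose space at scale $\sqrt\alpha\,\epsilon^{1/\theta}$, apply the Gaussian correlation inequality to factor the intersection into a product, control near-diagonal pairs with Lemma \ref{lem:Nt:tail} and far-apart pairs through sup-norm bounds on $N$ via Lemma \ref{lem:eq:N:tail}; the paper organizes this as a tiling of $\T^2$ into squares with distance-dependent thresholds off the diagonal, while you use uniform sup-norm events $B_k$ plus the triangle inequality, which is only an organizational difference. Also, the ``principal obstacle'' you flag is not one: since $J_k$ and $\hat J_k$ sit inside intervals of length $\epsilon^{1/\theta}$ (for $\alpha$ small), the complements of $A_k$ and $B_k$ only become more likely when the interval is enlarged, so Lemmas \ref{lem:Nt:tail} and \ref{lem:eq:N:tail} apply as stated with your choices of $\lambda$ (equivalently one can rescale $\epsilon\mapsto\alpha^{\theta/2}\epsilon$, exactly as the paper does for its diagonal squares), leaving only the same prefactor-absorption step that the paper itself performs.
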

\begin{proof} We first split $\T^2$ into squares $S$ of side length $\alpha^{\frac12}\epsilon^{\frac 1\theta}$. By the Gaussian correlation inequality (\cite{lata-matl}, \cite{roye}) we have 
\[  P \left(\sup_{t\le \alpha\epsilon^{\frac 2\theta},\;x\ne y \in \T} \big\vert \widetilde N(t,x,y) \big \vert \le \epsilon\right) \ge \prod_S   P \left(\sup_{\stackrel{t\le \alpha \epsilon^{\frac 2\theta},}{(x,y) \in S,\; x
\ne y}} \big\vert \widetilde N(t,x,y) \big \vert \le \epsilon\right).\]
For $k=0,1,\cdots,\alpha^{-\frac12}\epsilon^{- \frac 1\theta}-1$, let $S_k$ be a square in $\T^2$ whose center is $k2^{-\frac12}\alpha^{\frac12}\epsilon^{\frac 1\theta}$ from the diagonal  $x=y$. There are at most $2\alpha^{-\frac12}\epsilon^{-\frac 1\theta}$ of such squares.  Therefore the above probability is bounded below by 
\be\label{eq:Nt}\begin{split}  &\prod_{k=0}^{\alpha^{-\frac12}\epsilon^{- \frac 1\theta}-1} \left[P\left(\sup_{\stackrel{t\le \alpha\epsilon^{\frac 2\theta},}{(x, y) \in S_{k},\, x\ne y}} \big\vert \widetilde N(t,x,y) \big \vert \le \epsilon\right)\right]^{2\alpha^{-\frac12}\epsilon^{-\frac{1}{\theta}}}.
\end{split}
\ee
Let us now give a lower bound of the expression inside the square brackets. We first consider the case when $k\ge 1$. For any $(x,y)\in S_k$ one has a lower bound $|x-y|\ge \frac{1}{4}\left(k\alpha^{\frac1\theta}\epsilon^{\frac1\theta}\right)$ and therefore  
\begin{equation}
\begin{split}
  &P\left(\sup_{t\le \alpha\epsilon^{\frac 2\theta},\;(x,y) \in S_{k}} \big\vert \widetilde N(t,x,y) \big \vert \le \epsilon\right) \\
  & \ge P\left(\sup_{t\le \alpha\epsilon^{\frac 2\theta},\;(x,y) \in S_{k}} \big\vert  N(t,x) - N(t,y) \big \vert \le \frac\epsilon 4 \left(k  \alpha^{\frac12}\epsilon^{\frac 1\theta}\right)^{\frac12-\theta}\right) \\
 & \ge P\left(\sup_{t\le \alpha\epsilon^{\frac 2\theta},\;(x, y) \in S_{k}} \max\Big\{ \big\vert  N(t,x) \big \vert,\,  \big\vert N(t, y) \big\vert \Big\}\le \frac\epsilon 8 \left(k \alpha^{\frac12}\epsilon^{\frac 1\theta}\right)^{\frac12-\theta}\right)\\
 &\ge 1- 2\mathbf K_1\exp \left(- \mathbf K_2\frac{k^{1-2\theta}}{64\cu^2\alpha^\theta} \right),
\end{split}
\end{equation}
the last inequality follows from \eqref{eq:N:tail}. Therefore there exists an $\alpha_1=\frac{C(\theta)}{\cu^{2/\theta}}>0$ small enough such that for all positive $\alpha< \alpha_1$ one has
\[P\left(\sup_{t\le \alpha\epsilon^{\frac 2\theta},\;(x,y) \in S_k} \big\vert \widetilde N(t,x,y) \big \vert \le \epsilon\right)\ge 1- \exp \left(- \frac{\mathbf K_2k^{1-2\theta}}{128\cu^2\alpha^\theta} \right).\] Returning to \eqref{eq:Nt} we can obtain a lower bound on the product of terms for which $k\ne 0$ by choosing an $\alpha_2=\frac{C(\theta)}{\cu^{2/\theta}}>0$ small enough such that for $\alpha<\alpha_2$ we have
\be\label{eq:Nt:2}
\begin{split}
&\prod_{k=1}^{\alpha^{-\frac12}\epsilon^{- \frac 1\theta}-1} \left[P\left(\sup_{\stackrel{t\le \alpha\epsilon^{\frac 2\theta},}{(x, y) \in S_{k},\, x\ne y}} \big\vert \widetilde N(t,x,y) \big \vert \le \epsilon\right)\right]^{2\alpha^{-\frac12}\epsilon^{-\frac{1}{\theta}}}\\
 %&\ge \prod_{k=1}^{\alpha^{-\frac12}\epsilon^{- \frac 1\theta}}  \left\{1- \exp \left(- \frac{\mathbf K_2}{2}\frac{k^{1-2\theta}}{64\cu^2\alpha^\theta} \right)\right\}^{2\alpha^{-\frac12}\epsilon^{-\frac 1\theta}} \\
 &\ge \exp\left(2\alpha^{-\frac12}\epsilon^{-\frac 1\theta}\sum_{k=1}^{\alpha^{-\frac12}\epsilon^{-\frac 1\theta}-1} \log \left\{1- \exp \left(- \frac{\mathbf{K_2}k^{1-2\theta}}{128\cu^2\alpha^\theta} \right)\right\}\right) \\
 &\ge \exp\left(-2\alpha^{-\frac12}\epsilon^{-\frac 1\theta}\sum_{k=1}^{\alpha^{-\frac12}\epsilon^{-\frac 1\theta}-1}  \exp \left(- \frac{\mathbf K_2k^{1-2\theta}}{128\cu^2\alpha^\theta} \right)\right) \\
 &\ge \exp\left(-2\alpha^{-\frac12} \exp\left(-\frac{\mathbf K_2}{256\cu^2\alpha^\theta}\right)\cdot \epsilon^{-\frac1\theta}\right). 
\end{split}
\ee
Finally we consider the $k=0$ term in \eqref{eq:Nt}. For a small $\alpha_3=\frac{C(\theta)}{\cu^{2/\theta}}>0$ one has for $\alpha<\alpha_3 $ 
\be\label{eq:Nt:1}
\begin{split}
&\left[P\left(\sup_{\stackrel{t\le \alpha\epsilon^{\frac 2\theta},}{(x, y) \in S_{0},\, x\ne y}} \big\vert \widetilde N(t,x,y) \big \vert \le \frac{( \alpha^{\frac12}\epsilon^{\frac1\theta})^{\theta}}{\alpha^{\frac\theta2}}\right)\right]^{2\alpha^{-\frac12}\epsilon^{-\frac1\theta}}\\
&\ge \left[1-\exp\left(-\frac{\mathbf K_4}{2\cu^2\alpha^\theta}\right)\right]^{2\alpha^{-\frac12}\epsilon^{-\frac1\theta}} \\
&\ge  \exp\left(-4\alpha^{-\frac12} \exp\left(-\frac{\mathbf K_4}{ 2\cu^2\alpha^\theta}\right)\cdot \epsilon^{-\frac1\theta}\right),
\end{split}
\ee
where the first inequality  follows by Lemma \ref{lem:Nt:tail}.  We now use the bounds \eqref{eq:Nt:1} and \eqref{eq:Nt:2} in \eqref{eq:Nt}. The statement \eqref{eq:ho:lbd} follows immediately from this by choosing $\alpha_0\le \alpha_1\wedge \alpha_2\wedge \alpha_3$ small enough. 
\end{proof} 
 \begin{lem} \label{lem:N:Nt} There exists a constant $\mathbf K_6$ dependent only on $\theta$, and a positive $\tilde \alpha_0=\frac{C(\theta)}{\max(\cu^4,\cu^{2/\theta})} $ such that for $\alpha<\tilde \alpha_0$ small enough one has 
 \be\label{eq:N:Nt}
 P\left(\sup_{t\le \alpha\epsilon^{\frac2\theta}, x\in \T} |N(t,x)|\le \epsilon^{\frac{1}{2\theta}},\;\sup_{t\le \alpha \epsilon^{\frac2\theta},\, x\ne y \in \T} |\widetilde N(t,x,y)|\le \epsilon\right) \ge \exp\left(- \frac{1}{ \alpha^{\frac12}\epsilon^{\frac1\theta}}\exp\left(-\frac{\mathbf K_6}{\cu^2\alpha^\theta}\right)\right)
 \ee
 \end{lem}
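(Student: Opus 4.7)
The plan is to exploit the Gaussianity of $N$ and $\widetilde N$ (available since $\sigma$ does not depend on $u$) and apply the Gaussian correlation inequality twice, combined with the one-point estimate of Lemma \ref{lem:eq:N:tail} and the semi-norm estimate of Lemma \ref{lem:ho:lbd}. Both supremum events in \eqref{eq:N:Nt} are symmetric convex sets in the Gaussian space generated by the white noise $\dot W$ (they are intersections of events of the form $\{|L| \le c\}$ for continuous linear functionals $L$). Hence the Gaussian correlation inequality of \cite{roye} gives, as a first step,
\[
P\Big(\text{LHS of \eqref{eq:N:Nt}}\Big) \;\ge\; P\!\left(\sup_{t\le \alpha\epsilon^{\frac2\theta},\, x\in \T}|N(t,x)| \le \epsilon^{\frac{1}{2\theta}}\right) \cdot P\!\left(\sup_{t\le \alpha\epsilon^{\frac2\theta},\, x\ne y \in \T}|\widetilde N(t,x,y)|\le \epsilon\right).
\]
The second factor is directly bounded below by Lemma \ref{lem:ho:lbd}, so the entire task reduces to producing a matching lower bound on the first factor.

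To bound the first factor, I would mimic the decomposition used in the proof of Lemma \ref{lem:ho:lbd}. Partition $\T$ into intervals $I_k := [k\alpha^{1/2}\epsilon^{1/\theta},\, (k+1)\alpha^{1/2}\epsilon^{1/\theta}]$ for $k=0,1,\dots, \alpha^{-1/2}\epsilon^{-1/\theta}-1$, and apply the Gaussian correlation inequality a second time to get
\[
P\!\left(\sup_{t,\,x\in \T}|N(t,x)| \le \epsilon^{\frac{1}{2\theta}}\right) \;\ge\; \prod_{k} P\!\left(\sup_{t\le \alpha\epsilon^{\frac 2\theta},\; x\in I_k} |N(t,x)| \le \epsilon^{\frac{1}{2\theta}}\right).
\]
For each $k$ I would rescale: with $\tilde\epsilon := \alpha^{\theta/2}\epsilon$ one has $|I_k| = \tilde\epsilon^{1/\theta}$ and $\alpha\epsilon^{2/\theta} = \tilde\epsilon^{2/\theta}$, so Lemma \ref{lem:eq:N:tail} applied with parameter $\alpha=1$ and $\lambda = (\epsilon/\tilde\epsilon)^{1/(2\theta)} = \alpha^{-1/4}$ gives
\[
P\!\left(\sup_{t,\,x\in I_k} |N(t,x)| > \epsilon^{\frac{1}{2\theta}}\right) \;\le\; \mathbf K_1 \exp\!\left(-\frac{\mathbf K_2}{\mathscr C_2^2\,\alpha^{1/2}}\right).
\]
For $\alpha$ small enough (say $\alpha < \hat\alpha_0 = C(\theta)/\cu^4$) this is at most $1/2$, and using $\log(1-x) \ge -2x$ on $[0,\tfrac12]$ one obtains
\[
P\!\left(\sup_{t,\,x\in \T}|N(t,x)| \le \epsilon^{\frac{1}{2\theta}}\right) \;\ge\; \exp\!\left(-\frac{2\mathbf K_1}{\alpha^{1/2}\epsilon^{1/\theta}}\exp\!\left(-\frac{\mathbf K_2}{\mathscr C_2^2 \alpha^{1/2}}\right)\right).
\]

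Finally I would multiply this with the bound from Lemma \ref{lem:ho:lbd}. Since $\theta<\tfrac12$ and $\alpha<1$ one has $\alpha^{-1/2}>\alpha^{-\theta}$, so the factor $\exp(-\mathbf K_2/(\cu^2\alpha^{1/2}))$ is \emph{smaller} than $\exp(-\mathbf K_5/(\cu^2\alpha^{\theta}))$; thus for $\alpha$ below some $\tilde\alpha_0 = C(\theta)/\max(\cu^4,\cu^{2/\theta})$ the sum of the two exponential terms in the combined exponent is dominated by (twice) the $\alpha^{\theta}$-term, which absorbs the multiplicative constant $2\mathbf K_1$ into a new constant $\mathbf K_6$ slightly smaller than $\mathbf K_5$, giving precisely \eqref{eq:N:Nt}. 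The main obstacle is really a bookkeeping one: verifying that the Gaussian correlation inequality applies to the uncountable supremum events (handled via a.s.\ continuity of $N$ and $\widetilde N$ and reduction to countable intersections over a dense set) and choosing $\tilde\alpha_0$ so that every ``small enough $\alpha$'' step in the argument holds simultaneously.
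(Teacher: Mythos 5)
Your argument is correct and is essentially the paper's own proof: the Gaussian correlation inequality factorizes the joint event, Lemma \ref{lem:ho:lbd} handles the $\widetilde N$ factor, and the $N$ factor is treated by partitioning $\T$ into intervals of length $\alpha^{1/2}\epsilon^{1/\theta}$, applying the correlation inequality again, and invoking Lemma \ref{lem:eq:N:tail} on each interval (your rescaling $\tilde\epsilon=\alpha^{\theta/2}\epsilon$, $\lambda=\alpha^{-1/4}$ is exactly how the paper's exponent $\mathbf K_2/(\cu^2\alpha^{1/2})$ arises). The final absorption of constants into $\mathbf K_6$ using $\alpha^{-1/2}>\alpha^{-\theta}$ and the choice $\tilde\alpha_0=C(\theta)/\max(\cu^4,\cu^{2/\theta})$ also match the paper.
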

 \begin{proof} An application of the Gaussian correlation inequality (\cite{roye}, \cite{lata-matl}) gives 
 \bes
 \begin{split}
&P\left(\sup_{t\le \alpha\epsilon^{\frac2\theta}, x\in \T} |N(t,x)|\le \epsilon^{\frac{1}{2\theta}},\;\sup_{t\le \alpha \epsilon^{\frac2\theta},\, x\ne y \in \T} |\widetilde N(t,x,y)|\le \epsilon\right) \\
&\ge P\left(\sup_{t\le \alpha\epsilon^{\frac2\theta}, x\in \T} |N(t,x)|\le \epsilon^{\frac{1}{2\theta}}\right)\cdot P\left(\sup_{t\le \alpha \epsilon^{\frac2\theta},\, x\ne y \in \T} |\widetilde N(t,x,y)|\le \epsilon\right).
\end{split}
 \ees
 
 We now partition $\T$ into disjoint intervals $[a_i, a_{i+1})$ where $a_i:=i\alpha^{\frac12}\epsilon^{\frac1\theta}$ for $i=1, \dots, {\alpha^{-\frac12}\epsilon^{-\frac1\theta}}$.
 Applying the Gaussian correlation inequality once again and \eqref{eq:N:tail}, one obtains 
 \bes \begin{split}P\left(\sup_{t\le \alpha\epsilon^{\frac2\theta}, x\in \T} |N(t,x)|\le \epsilon^{\frac{1}{2\theta}}\right)&\ge \prod_{i=1}^{\alpha^{-\frac12}\epsilon^{-\frac1\theta}} P\left(\sup_{t\le \alpha\epsilon^{\frac2\theta}, x\in [0,\sqrt \alpha \epsilon^{\frac1\theta}]} |N(t,x)|\le \epsilon^{\frac{1}{2\theta}}\right) \\
 &\ge \left\{1-\exp\left(-\frac{\mathbf K_2}{2\cu^2 \alpha^{\frac12}}\right)\right\}^{\alpha^{-\frac12}\epsilon^{-\frac1\theta}} \\
 &\ge \exp \left(-2\alpha^{-\frac12}\epsilon^{-\frac1\theta} \exp\left(-\frac{\mathbf K_2}{ 2\cu^2 \alpha^{\frac12}}\right)\right)
 \end{split}
 \ees
 if $\alpha< \alpha_4 = \frac{C}{\cu^4}$ is small enough. The result now follows from this and \eqref{eq:ho:lbd}.
 \end{proof}
 For the next lemma recall the events $B_i$ defined in \eqref{eq:B}.
\begin{lem}\label{lem:B0}For all initial profiles $u_0$ with $|u_0(x)|\le \frac{\epsilon^{\frac{1}{2\theta}}}{3}$ and $\mathcal H^{(\theta)}(u_0)\le \frac{\epsilon}{3}$, one has
\[ P(B_0) \ge \exp\left(-\frac{2}{ \sqrt c_0 \epsilon^{\frac1\theta}} \exp\left(-\frac{\mathbf K_6}{36 \cu^2c_0^\theta}\right)-\frac{2}{9c_0\cl^2\epsilon^{\frac1\theta}} \right)\]
when  $ c_0 6^{\frac2\theta}<\tilde \alpha_0$, where $\tilde \alpha_0$ is defined in Lemma \ref{lem:N:Nt}.
\end{lem}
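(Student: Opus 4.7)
The plan is to perform a Girsanov change of measure that exactly cancels, at the endpoint $t_1$, the deterministic heat-semigroup contribution coming from $u_0$, and then combine this with a Cauchy--Schwarz bound to reduce the problem to the zero-initial-profile case handled by Lemma \ref{lem:N:Nt}. Since $\sigma$ does not depend on $u$, the solution decomposes as $u(t,x)=V(t,x)+N(t,x)$ with $V(t,x):=(G_t*u_0)(x)$ and $N$ the centered Gaussian noise term; by Lemma \ref{lem:hold:det} and the hypothesis on $u_0$, one has $\|V(t,\cdot)\|_\infty\le \epsilon^{\frac{1}{2\theta}}/3$ and $\HO^{(\theta)}(V(t,\cdot))\le \epsilon/3$ for every $t\ge 0$.

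I would introduce the deterministic Girsanov drift
\[
h(s,y):=-\frac{V(s,y)}{\sigma(s,y)\,t_1},\qquad s\in[0,t_1],\ y\in \T,
\]
and let $Q$ denote the measure with $dQ/dP=\exp(\int h\,dW-\tfrac12\int h^2)$. The key observation is the semigroup identity $G_{t-s}*V(s,\cdot)=V(t,\cdot)$, which gives
\[
\Psi(t,x):=\int_0^t\!\!\int_\T G(t-s,x-y)\sigma(s,y)h(s,y)\,ds\,dy=-\frac{t}{t_1}V(t,x),
\]
so that under $Q$, $u(t,x)=(1-t/t_1)V(t,x)+\tilde N(t,x)$, where $\tilde N$ has, under $Q$, the same law as $N$ does under $P$. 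In particular the deterministic part $V+\Psi$ vanishes identically at $t=t_1$, and its sup and H\"older norms are contracted by the factor $1-t/t_1\in[0,1]$ at every intermediate time, exactly matching the structure of the constraints defining $B_0$.

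Next, a Cauchy--Schwarz argument applied to $Q(B_0)=E_P[1_{B_0}(dQ/dP)]$ together with $E_P[(dQ/dP)^2]=\exp(\int h^2)$ gives $P(B_0)\ge Q(B_0)^2\exp(-\int_0^{t_1}\!\int_\T h^2)$. Using $\|V(s,\cdot)\|_\infty\le \|u_0\|_\infty\le \epsilon^{\frac{1}{2\theta}}/3$ and $t_1=c_0\epsilon^{2/\theta}$, the integral $\int h^2$ is bounded by $1/(9\cl^2 c_0\epsilon^{1/\theta})$, comfortably below the target $2/(9\cl^2 c_0\epsilon^{1/\theta})$. To lower-bound $Q(B_0)$, I would check that on the event
\[
\mathcal E:=\Bigl\{\sup_{t\le t_1,\,x}|\tilde N(t,x)|\le (\epsilon/6)^{\frac{1}{2\theta}},\ \sup_{t\le t_1,\,x\ne y}\tfrac{|\tilde N(t,x)-\tilde N(t,y)|}{|x-y|^{\frac12-\theta}}\le \epsilon/6\Bigr\},
\]
all four defining inequalities of $B_0$ are satisfied: the endpoint ones because $V+\Psi\equiv 0$ at $t=t_1$ and $(\epsilon/6)^{\frac{1}{2\theta}}\le \epsilon^{\frac{1}{2\theta}}/6$ (a consequence of $\theta<\tfrac12$), and the intermediate ones from the contracted bounds on $(1-t/t_1)V(t,\cdot)$. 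Since $\tilde N$ under $Q$ has the same law as $N$ under $P$ with zero initial profile, I would invoke Lemma \ref{lem:N:Nt} with $\tilde\alpha:=c_0\cdot 6^{2/\theta}$ and $\tilde\epsilon:=\epsilon/6$---permissible by the hypothesis $c_0\cdot 6^{2/\theta}<\tilde\alpha_0$---to obtain $Q(\mathcal E)\ge\exp\bigl(-\tfrac{1}{\sqrt{c_0}\epsilon^{1/\theta}}\exp(-\tfrac{\mathbf K_6}{36\cu^2 c_0^\theta})\bigr)$; squaring this and combining with the $\int h^2$ bound yields the claim.

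The main subtlety, and the step I expect to be the real obstacle, is the identification of the right drift. The naive choice $\sigma h=-u_0(y)/t_1$ satisfies the cost budget and gives $\Psi(t_1,x)=-\bar V(x)$, the time-averaged $V$, whose $L^\infty$ residual with $V(t_1,\cdot)$ is controlled by the temporal H\"older bound of Lemma \ref{lem:hold:det:t}, but whose H\"older seminorm residual is only bounded by $2\HO^{(\theta)}(u_0)\le 2\epsilon/3$ via the triangle inequality, violating the endpoint constraint $\HO^{(\theta)}(u(t_1,\cdot))\le \epsilon/6$. The substitution $\sigma h=-V(s,\cdot)/t_1$ together with the semigroup identity collapses the residual to zero in every norm simultaneously at $t=t_1$ and keeps it below $V(t,\cdot)$ pointwise and in H\"older seminorm at intermediate times, which is the essential mechanism that makes the argument go through.
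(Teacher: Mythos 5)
Your proposal is correct and follows essentially the same route as the paper's proof: the same Girsanov drift $\sigma h=-(G_s*u_0)/t_1$ (whose semigroup identity yields the $(1-t/t_1)(G_t*u_0)$ representation), the same Cauchy--Schwarz reduction $P(B_0)\ge Q(B_0)^2\exp(-\int h^2)$ with the bound $\int h^2\le \tfrac{1}{9c_0\cl^2\epsilon^{1/\theta}}$, and the same application of Lemma \ref{lem:N:Nt} with $\alpha=c_0 6^{2/\theta}$ and $\epsilon/6$ in place of $\epsilon$. Nothing further is needed.
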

\begin{rem} \label{rem:B0}Note (see Remark \ref{rem:markov}) that the arbitrariness of $u_0$ assumed above is so that we have the same lower bound for $P(B_i\vert B_0, B_1,\cdots, B_{i-1})$. This is because given $B_{i-1}$ the profile $u(t_{i-1},\cdot)$ has sup norm at most $\frac{\epsilon^{\frac{1}{2\theta}}} {3}$ and H\"older norm at most $\frac\epsilon 3$. One can then use the Markov property and the above result. 
\end{rem}
\begin{proof}
We will use a change of measure argument inspired by a technique in large deviation theory. A similar method was employed in \cite{athr-jose-muel}. Consider the measure $Q$ defined by 
\[ \frac{dQ}{dP} = \exp \left(Z_{t_1}^{(1)}- \frac12 Z_{t_1}^{(2)}\right),\]
where 
\bes 
\begin{split}
Z_{t_1}^{(1)} &= -\int_0^{t_1} \int_{\T} \frac{1}{\sigma(r,z)} \frac{(G_r*u_0)(z)}{t_1} W(dzdr), \\
Z_{t_1}^{(2)} &= \int_0^{t_1} \int_{\T} \left |\frac{1}{\sigma(r,z)} \frac{(G_r*u_0)(z)}{t_1}\right|^2 \, dz dr.\end{split}
\ees
Define 
\[ \dot{\widetilde W}(r,z) := \dot W(r,z) + \frac{1}{\sigma(r,z)}\cdot \frac{(G_r* u_0)(z)}{t_1}.\]
It follows from Lemma 2.1 in \cite{athr-jose-muel} that $\dot{\widetilde W}$ is a white noise under the measure $Q$. %We will use this soon.

%Define 
%\[ \dot{\widetilde W}(r,z) := \dot W(r,z) + \frac{1}{\sigma(r,z)}\cdot \frac{(G_r* u_0)(z)}{t_1}.\]
%It follows from {\color{red} Lemma 2.1 in \cite{athr-jose-muel}} that $\dot{\widetilde W}$ is a white noise under the measure $Q$. 
 By change of measure
 \[ Q(B_0)= E_P\left(\frac{dQ}{dP}\cdot \mathbf 1 \{B_0\}\right),\]
and so Cauchy-Schwarz inequality gives 
\[ Q(B_0) \le \left[E_P\left(\frac{dQ}{dP}\right)^2\right]^{\frac12} \cdot P(B_0)^{\frac12},\]
from which we obtain 
\be \label{eq:pq} P(B_0) \ge Q(B_0)^2\left\{ E_P\left(\frac{dQ}{dP}\right)^2\right\}^{-1}.\ee
Now 
\be \label{eq:rn}E_P\left(\frac{dQ}{dP}\right)^2= \exp \left(\int_0^{t_1}\int_{\T}\left|\frac{1}{\sigma(s,y)}\cdot \frac{(G_s*u_0)(y)}{t_1}\right|^2dy ds\right) \le \exp\left(\frac{1}{9c_0\cl^2\epsilon^{\frac1\theta}} \right).\ee
We next provide a lower bound on $Q(B_0)$. First observe that 
\begin{equation}\label{eq:holder:1}
u(t,x)= \left(1- \frac{t}{t_1}\right) (G_t*u_0)(x) + \int_0^t \int_{\T} G(t-r, x-z) \sigma(r,z) \dot{\widetilde W}(dr dz),
\end{equation}
and
\begin{equation} \label{eq:holder:2}
\begin{split}
 \frac{u(t,x)-u(t,y) }{|x-y|^{\frac12-\theta}} &= \left(1- \frac{t}{t_1}\right)\cdot \left[\frac{(G_t* u_0)(x) -(G_t*u_0)(y)}{|x-y|^{\frac12-\theta}} \right] \\
 &\qquad + \int_0^t \int_{\T} \frac{G(t-r, x-z)- G(t-r, y-z) }{|x-y|^{\frac12-\theta}} \sigma(r,z) \dot{\widetilde W}(dr dz).
\end{split}
\end{equation} 
The deterministic term in \eqref{eq:holder:1} is bounded uniformly (in $x$) by $\frac {\epsilon^{\frac{1}{2\theta}}}{3}$ in the interval $[0,t_1]$ and is equal to $0$ at the terminal time $t_1$. Similarly, due to Lemma \ref{lem:hold:det}, the first term in \eqref{eq:holder:2} is bounded uniformly (in $x, y$) by $\frac \epsilon 3$ in the same interval and is also equal to $0$ at the terminal time $t_1$. We define $N_1(t, x)$ and $\widetilde N_1(t, x, y)$ as $N(t, x)$ and $\widetilde N(t, x, y)$ as in \eqref{eq:N} and \eqref{eq:N:til} respectively but by replacing $\dot W$ by $\dot{\widetilde W}$.
 It therefore follows 
\begin{equation}\label{eq:qe}\begin{split}
 Q(B_0) &\ge Q\left(\sup_{t\le t_1, \, x\in \T} |N_1(t,x)|\le \frac{\epsilon^{\frac{1}{2\theta}}}{6},\; \sup_{t\le t_1,\; x\ne y \in \T}\big\vert \widetilde N_1(t,x,y)\big\vert \le \frac{ \epsilon}{6}\right) \\
 &\ge Q\left(\sup_{t\le c_06^{\frac2\theta}(\epsilon/6)^{\frac2\theta}, \, x\in \T} |N_1(t,x)|\le\left(\frac{\epsilon}{6}\right)^{\frac{1}{2\theta}},\; \sup_{t\le c_06^{\frac2\theta}(\epsilon/6)^{\frac2\theta},\; x\ne y \in \T}\big\vert \widetilde N_1(t,x,y)\big\vert \le \frac{ \epsilon}{6}\right) \\
 &\ge \exp\left(-\frac{1}{\sqrt c_0 \epsilon^{\frac1\theta}} \exp\left(-\frac{\mathbf K_6}{36 \cu^2c_0^\theta}\right)\right),
\end{split}
\end{equation}  
as long as $ c_0 6^{\frac2\theta}<\tilde \alpha_0$ from Lemma \ref{lem:N:Nt}.  If we use \eqref{eq:qe} and \eqref{eq:rn} into \eqref{eq:pq} we obtain
\[ P(B_0) \ge \exp\left(-\frac{2}{\sqrt c_0 \epsilon^{\frac1\theta}} \exp\left(-\frac{\mathbf K_6}{36 \cu^2c_0^\theta}\right)-\frac{2}{9c_0\cl^2\epsilon^{\frac1\theta}} \right) \]
as long as $ c_0 6^{\frac2\theta}<\tilde \alpha_0$.
\end{proof}

\subsection{Lower bound in Theorem \ref{thm1} (\lowercase{b})}  The argument in Section \ref{sec:lb:Gaussian} has to be modified at quite a few places. We first note the following lemma which follows immediately from the Gaussian correlation inequality.
 \begin{lem} \label{lem:Nh} There is $\alpha_0^{\ha} = \frac{C(\theta)}{\cu^{2/\theta}}>0$ such that for $\alpha<\alpha_0^{\ha}$ we have
 \[ P\left(\sup_{\stackrel{0\le s, t\le \alpha\epsilon^{\frac2\theta},\, s\ne t}{x\in \T}} \left|N^{\ha}(s,t,x) \right|\le \epsilon\right) \ge  \exp\left(-\frac{1}{\alpha^{\frac12}\epsilon^{\frac1\theta}}\exp\left(-\frac{\mathbf K_9}{\cu^2\alpha^{\theta}}\right)\right)\]
 \end{lem}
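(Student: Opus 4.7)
The plan is to follow the template of Lemma \ref{lem:ho:lbd}, but the argument will be substantially simpler because the tail estimate Lemma \ref{lem:Nh:tail} already controls $N^{\#}$ uniformly over the \emph{entire} time square $[0,\alpha\epsilon^{2/\theta}]^2$. Unlike the spatial-increment case in Lemma \ref{lem:ho:lbd}, there is no need to split the index set into on-diagonal and off-diagonal squares, because the sole free variable that is not already absorbed into Lemma \ref{lem:Nh:tail} is the spatial point $x\in \T$. Recall that we are in the Gaussian setting ($\sigma$ independent of $u$), so $N(t,x)$, and therefore $N^{\#}(s,t,x)$, is a centered Gaussian field.

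First I would partition $\T$ into disjoint intervals $I_k = [k\epsilon^{1/\theta}, (k+1)\epsilon^{1/\theta})$, $k=0,1,\ldots,\epsilon^{-1/\theta}-1$. The events
\[ E_k := \Bigl\{\sup_{\substack{0\le s,t\le \alpha\epsilon^{2/\theta},\, s\ne t\\ x\in I_k}} |N^{\#}(s,t,x)|\le \epsilon \Bigr\} \]
are, after a standard approximation by countable dense subsets (using sample-path continuity of $N^{\#}$), symmetric convex sets in the Gaussian Hilbert space generated by $N^{\#}$. The Gaussian correlation inequality of Royen (\cite{roye}, \cite{lata-matl}) then gives
\[ P\Bigl(\sup_{\substack{s\ne t\\ x\in\T}}|N^{\#}(s,t,x)|\le\epsilon\Bigr) \;\ge\; \prod_{k=0}^{\epsilon^{-1/\theta}-1} P(E_k). \]

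Next, applying Lemma \ref{lem:Nh:tail} with $\lambda = 1$ and $a=k\epsilon^{1/\theta}$ yields, for any $\alpha<1$,
\[ P(E_k^c)\;\le\; \frac{\mathbf K_7}{\sqrt{\alpha}}\exp\!\left(-\frac{\mathbf K_8}{\cu^2 \alpha^{\theta}}\right). \]
Choosing $\alpha_0^{\ha}=\alpha_0^{\ha}(\theta,\cu)>0$ small enough so that this upper bound is at most $1/2$, the elementary inequality $1-u\ge e^{-2u}$ (for $u\in[0,1/2]$) allows us to replace $P(E_k)$ by $\exp(-2 P(E_k^c))$. Multiplying the $\epsilon^{-1/\theta}$ factors produces
\[ \prod_k P(E_k) \;\ge\; \exp\!\left(-\frac{2\mathbf K_7}{\alpha^{1/2}\epsilon^{1/\theta}}\exp\!\left(-\frac{\mathbf K_8}{\cu^2\alpha^{\theta}}\right)\right). \]

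Finally, by shrinking $\alpha_0^{\ha}$ once more so that $2\mathbf K_7\le \exp(\mathbf K_8/(2\cu^2\alpha^{\theta}))$ for all $\alpha<\alpha_0^{\ha}$, we absorb the multiplicative constant into the inner exponential, arriving at the stated bound with $\mathbf K_9:=\mathbf K_8/2$. There is no genuine obstacle in this argument; the only care needed is the routine verification that the Gaussian correlation inequality applies to continuous-parameter sup events, which is handled by restricting to countable dense subsets of the index set. The proof is in fact cleaner than that of Lemma \ref{lem:ho:lbd} precisely because Lemma \ref{lem:Nh:tail} already absorbs the potentially singular behavior as $|t-s|\to 0$.
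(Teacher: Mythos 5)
Your proof is correct and is essentially the argument the paper has in mind (the paper merely asserts that Lemma \ref{lem:Nh} ``follows immediately from the Gaussian correlation inequality,'' in parallel with Lemmas \ref{lem:ho:lbd} and \ref{lem:N:Nt}): partition $\T$ spatially, factor with Royen's inequality, and apply the tail bound of Lemma \ref{lem:Nh:tail} on each block, absorbing constants by shrinking $\alpha_0^{\ha}$. The only cosmetic difference is that you take blocks of length $\epsilon^{1/\theta}$ with $\lambda=1$ and pick up the $\alpha^{-1/2}$ from the prefactor in Lemma \ref{lem:Nh:tail}, whereas the paper's parallel computations use blocks of length $\alpha^{1/2}\epsilon^{1/\theta}$ with a rescaled $\epsilon$; both yield the stated bound.
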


% Let $\cS^{a}$ denotes the shift operator of the white noise in time:
%\[ \cS^{a} W (ds dx) = W(d(s+a) dx), \]
%so that
%\[ \cS^a N(t,x) = \int_0^t \int_{\T} G_{t-s}(x,y) \cS^{a} W(ds dx) = \int_a^{a+t} G_{a+t-s}(x,y) W(ds dx),\] 
%and \[\cS^a N^{\ha}(s,t,x) = \frac{\cS^aN(t,x) -\cS^a N(s,x) }{|t-s|^{\frac14-\frac\theta 2}}.\]
We shall consider time discretizations $t_i= ic_2\delta^2=i c_2\epsilon^{\frac2\theta},\; i=0,1,\cdots, I$. The constant $c_2$ will be  appropriately chosen so as to get a uniform lower bound on $P\left(B_i^{\ha} \big \vert B_0^{\ha}, B_1^{\ha}\cdots B_{i-1}^{\ha} \right)$ in \eqref{eq:ht:markov} below. It will only depend on $\theta$ and $\cu$. In this section let
\[ B_i^{\ha} := U_i^{\ha} \cap H_i^{\ha}\cap T_i^{\ha},\]
where, similar to Section \ref{sec:lb:Gaussian}, 
\begin{align}
 \label{eq:U:ha} U_i^{\ha} &=  \left\{\sup_{x\in \T} \left|u(t_{i+1}, x)\right| \le \frac{ \epsilon^{\frac{1}{2\theta}}}{8c_2^{\frac{\theta}{2} -\frac14}} , \text{ and }\sup_{x\in \T} \left|u(t,x)\right|\le \frac{ \epsilon^{\frac{1}{2\theta}}}{4c_2^{\frac{\theta}{2} -\frac14}} \text{ for all } t\in [t_i, t_{i+1}]\right\},\\
\label{eq:H:ha}  H_i^{\ha}&=\left\{\mathcal{H}_{t_{i+1}}^{(\theta)}(u)\le \frac{\epsilon}{8\Lambda}, \text{ and } \mathcal H_t^{(\theta)}(u)\le \frac{\epsilon}{2\Lambda} \text{ for all } t\in [t_i, t_{i+1}]\right\},\\
 \label{eq:T:ha} T_i^{\ha} &=\left\{\sup_{\stackrel{x\in \T}{t_i\le s, t\le t_{i+1},\, s\ne t }} \frac{\left |u(t,x)-u(s,x)\right|}{|t-s|^{\frac14-\frac\theta 2}} \le \frac{\epsilon}{2}\right\}.
\end{align}
Here, we recall the constant $\Lambda$ in \eqref{eq:lambda}. %
Let us  first consider  the following lemma.
\begin{lem} \label{lem:hold:time} We have the following inclusion. 
\be
\label{bi:sub}
 \cap_{i=0}^{I} B_i^{\ha} \subset \left\{\sup_{x\in\T} \HS_x^{(\theta)}(u)\le \epsilon \right\}.
 \ee
\end{lem}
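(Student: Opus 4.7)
The plan is to establish the inclusion pointwise: on the event $\cap_{i=0}^I B_i^{\ha}$ I would show that for every $x \in \T$ and every $0 \le s < t \le T$ one has $|u(t,x) - u(s,x)| \le \epsilon \,|t-s|^{\frac14-\frac\theta 2}$. I would begin by picking indices $i \le j$ with $s \in [t_i, t_{i+1}]$ and $t \in [t_j, t_{j+1}]$ (resolving any ambiguity at a grid point arbitrarily) and then split into three cases according to the value of $j-i$.

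In the easiest case $j = i$, the temporal restriction in $T_i^{\ha}$ directly supplies $|u(t,x) - u(s,x)| \le \tfrac{\epsilon}{2}|t-s|^{\frac14-\frac\theta 2}$. For the adjacent case $j = i+1$, I would insert the grid point $t_{i+1}$ and apply the triangle inequality, controlling the increment on $[t_{i+1},t]$ by $T_{i+1}^{\ha}$ and the increment on $[s,t_{i+1}]$ by $T_i^{\ha}$. Since $|t - t_{i+1}|\le |t-s|$ and $|t_{i+1}- s| \le |t-s|$ and $\tfrac14 - \tfrac\theta 2 > 0$, the two contributions add up to at most $\epsilon |t-s|^{\frac14-\frac\theta 2}$.

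The non-adjacent case $j \ge i+2$ is where the sup-norm control enters. By $U_i^{\ha}$ and $U_j^{\ha}$, both $|u(t,x)|$ and $|u(s,x)|$ are bounded by $\epsilon^{1/(2\theta)}/(4c_2^{\theta/2 - 1/4})$, so that $|u(t,x) - u(s,x)| \le \epsilon^{1/(2\theta)}/(2c_2^{\theta/2-1/4})$. At the same time the separation of intervals forces $t-s \ge t_j - t_{i+1} \ge c_2\delta^2 = c_2\epsilon^{2/\theta}$, hence $|t-s|^{\frac14-\frac\theta 2} \ge c_2^{1/4-\theta/2}\epsilon^{1/(2\theta)-1}$. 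The two powers of $c_2$ then cancel exactly, and the resulting ratio is at most $\epsilon/2$. This is precisely the role of the $c_2$-dependent constant $4c_2^{\theta/2-1/4}$ in the definition of $U_i^{\ha}$.

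I do not expect a serious obstacle here, since the statement is a purely deterministic pointwise consequence of the definitions of the events $B_i^{\ha}$. The only conceptual subtlety is that $T_i^{\ha}$ controls temporal increments only within a single sub-interval, which is what compels the separate treatment of $j=i+1$ via an intermediate grid point and of $j\ge i+2$ via the uniform bound; the spatial semi-norm restriction $H_i^{\ha}$ plays no role in the proof of this particular lemma.
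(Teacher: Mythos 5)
Your proof is correct and follows essentially the same route as the paper: a three-case split on the positions of $s$ and $t$, using $T_i^{\ha}$ within one sub-interval, an intermediate grid point with $T_i^{\ha}$ and $T_{i+1}^{\ha}$ for adjacent sub-intervals, and the sup-norm bounds from $U_i^{\ha}$ together with the separation $t-s\ge c_2\epsilon^{2/\theta}$ (with the exact cancellation of the $c_2$ powers) for distant sub-intervals. The only cosmetic difference is that in the distant case you bound $|u(t,x)-u(s,x)|$ directly by the two sup-norm bounds, whereas the paper first inserts the grid point and splits off one increment via $T_i^{\ha}$; both yield the same estimate, and your observation that the events $H_i^{\ha}$ are not needed matches the paper's own remark.
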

\begin{proof}We take a realization $u(\cdot,\cdot)$ of the left hand side.  We need to show for any $s<t\in [0,T]$,
\[ \frac{|u(t,x) -u(s,x)|}{|t-s|^{\frac14-\frac\theta 2}} \le \epsilon.\]
Suppose $s <t$ are both in $[t_i, t_{i+1}]$. Then, since the profile is in $T_i^{\ha}$,
\[ \frac{|u(t,x) -u(s,x)|}{|t-s|^{\frac14-\frac\theta 2}} \le \frac \epsilon 2.\]
Next we consider the case when $s\in [0, t_{i-1}]$ and $t\in [t_i, t_{i+1}]$. In this case we have 
\begin{align*}
  \frac{ | u(t,x) -u(s,x) |}{|t-s|^{\frac14-\frac\theta 2}} & \le\frac{ | u(t,x) -u(t_i,x) |}{|t-t_i|^{\frac14-\frac\theta 2}} +\frac{ | u(t_i,x) -u(s,x) |}{|t_i-s|^{\frac14-\frac\theta 2}}  \\
  &\le  \frac{\epsilon}{2} + \frac{2\epsilon^{\frac{1}{2\theta}}} {4c_2^{\frac{\theta}{2} -\frac14}}\cdot \left(\frac{1}{c_2 \epsilon^{\frac2\theta}}\right)^{\frac14-\frac\theta 2} \\
  &\le \epsilon,
\end{align*}
since $u \in \cap_{i=0}^{I-1} U_i^{\ha}$. Finally consider $s\in [t_{i-1}, t_i]$ and $t\in [t_i, t_{i+1}]$. In this case 
\[\frac{ | u(t,x) -u(s,x) |}{|t-s|^{\frac14-\frac\theta 2}}  \le \frac{ | u(t,x) -u(t_i,x) |}{|t-t_i|^{\frac14-\frac\theta 2}} +\frac{ | u(t_i,x) -u(s,x) |}{|t_i-s|^{\frac14-\frac\theta 2}}  \le \epsilon.\]
This shows that the realization is in $\left\{\sup_{x\in\T} \HS_x^{(\theta)}(u)\le \epsilon \right\}$.
\end{proof}
\begin{rem} Observe that the events $H_i^{\ha}$ play no role in the argument above, and we can in fact take the larger set $\cap_{i=0}^{I} (U_i^{\ha}\cap T_i^{\ha})$ in the left hand side of \eqref{bi:sub}. However, as we will see in Proposition \ref{prop:b0:ha} below, to get a lower bound on  $P(T_i^{\ha})$ we will need a control on the spatial H\"older norms of $u$ as given by the events $H_i^{\ha}$.  
\end{rem}
From the above lemma
\be\begin{split}\label{eq:ht:markov}
P \left(\sup_{x\in\T} \HS_x^{(\theta)}(u)\le \epsilon \right)%&\ge P \left(\sup_{0\le t\le T} \mathcal H_t^{(\theta)}(u)\le \epsilon,\, \sup_{\stackrel{0\le t\le T}{x\in[0,\,1]}}|u(t,\,x)|\leq \epsilon^{\frac{1}{2\theta}}\right)\\
 %& \ge P \left( \cap_{i=0}^{I-1} B_i\right) \\
&\ge  P \left( \cap_{i=0}^{I} B_i^{\ha}\right) \\
& = \prod_{i=0}^{I} P\left(B_i^{\ha} \big \vert B_0^{\ha}, B_1^{\ha}\cdots B_{i-1}^{\ha} \right).
\end{split}
\ee
The lower bound follows from the Markov property and the following

\begin{prop} \label{prop:b0:ha} Suppose the initial profile $u_0$ satisfies
\[ \sup_{x\in \T} \left|u_0(x)\right| \le \frac{\epsilon^{\frac{1}{2\theta}}c_2^{\frac14-\frac\theta 2}}{8},\quad \HO^{(\theta)}(u_0) \le \frac{\epsilon}{8\Lambda}. \]
Then there exists a constant $\mathbf K_{10}>0$ dependent only on $\theta$ and a positive $\tilde\alpha^{\ha}_0= \frac{C(\theta)}{\max(\cu^4,\cu^{2/\theta})}$ such that for all $c_2<\tilde\alpha^{\ha}_0$ one has 
\[ P\left(B_0^{\ha}\right) \ge \exp \left(-\frac{1}{\sqrt c_2\epsilon^{\frac 1\theta}}\exp\left(-\frac{\mathbf K_{10}}{(1+\Lambda^2)\cu^2c_2^{\theta}}\right)-\frac{1}{64\cl^2c_2^{\frac12+\theta}\epsilon^{\frac1\theta}} \right) .\]
\end{prop}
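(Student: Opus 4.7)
The plan is to mimic the proof of Lemma \ref{lem:B0}, now handling the extra temporal H\"older event $T_0^{\ha}$ and the $\Lambda$-dependent threshold in $H_0^{\ha}$. I would use the same Girsanov-type change of measure $Q$ as in that proof, with $dQ/dP = \exp(Z_{t_1}^{(1)} - \tfrac12 Z_{t_1}^{(2)})$, so that under $Q$ the solution admits the representation $u(t,x) = (1 - t/t_1)(G_t*u_0)(x) + N_1(t,x)$ analogous to \eqref{eq:holder:1}, where $N_1$ is the noise term driven by the $Q$-white noise $\dot{\widetilde W}$. Cauchy--Schwarz then gives $P(B_0^{\ha}) \ge Q(B_0^{\ha})^2 \exp(-Z_{t_1}^{(2)})$, and using $\sigma \ge \cl$ with the hypothesis $\|u_0\|_\infty \le \epsilon^{1/(2\theta)}c_2^{1/4-\theta/2}/8$ yields
\[Z_{t_1}^{(2)} \le \frac{\|u_0\|_\infty^2}{\cl^2 t_1} \le \frac{1}{64\cl^2 c_2^{1/2+\theta}\epsilon^{1/\theta}},\]
which is the second term in the target exponent.

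Next, Lemmas \ref{lem:hold:det} and \ref{lem:hold:det:t} together with the elementary inequality $|t-s|/t_1 \le |t-s|^{1/4-\theta/2}\, t_1^{\theta/2-1/4}$ bound the deterministic piece $(1-t/t_1)(G_t*u_0)(x)$ by $\|u_0\|_\infty$ in sup norm, by $\HO^{(\theta)}(u_0)$ in spatial H\"older seminorm, and by $\Lambda \HO^{(\theta)}(u_0) + \|u_0\|_\infty t_1^{\theta/2-1/4}$ in temporal H\"older seminorm; note also that this deterministic prefactor vanishes at $t=t_1$. Plugging the hypotheses $\HO^{(\theta)}(u_0) \le \epsilon/(8\Lambda)$ and $\|u_0\|_\infty \le \epsilon^{1/(2\theta)}c_2^{1/4-\theta/2}/8$ into these estimates gives that each deterministic contribution is at most half the corresponding threshold in $U_0^{\ha}$, $H_0^{\ha}$, $T_0^{\ha}$, so it suffices to lower bound
\[Q\!\left(\sup_{t\le t_1,\,x}|N_1(t,x)| \le \tfrac{\epsilon^{1/(2\theta)}c_2^{1/4-\theta/2}}{8},\; \sup_{t\le t_1,\,x\ne y}|\widetilde N_1(t,x,y)| \le \tfrac{\epsilon}{8\Lambda},\; \sup_{x,\,s\ne t\le t_1}|N^{\ha}_1(s,t,x)| \le \tfrac{\epsilon}{4}\right).\]

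I would then invoke the Gaussian correlation inequality of \cite{roye}, \cite{lata-matl} to decouple these three centered Gaussian events, and re-run the proofs of Lemma \ref{lem:N:Nt} (for the sup-norm and spatial H\"older factors) and Lemma \ref{lem:Nh} (for the temporal H\"older factor) at $\alpha = c_2$. The rescaled tail parameters $\tilde\lambda = 1/(8c_2^{\theta/2})$, $\tilde\lambda = 1/(8\Lambda c_2^{\theta/2})$, and $\tilde\lambda = 1/(4c_2^{\theta/2})$ arising from Lemmas \ref{lem:eq:N:tail}, \ref{lem:Nt:tail}, \ref{lem:Nh:tail} produce double-exponential factors $\exp(-\mathbf K/(\cu^2 c_2^\theta))$, $\exp(-\mathbf K/(\Lambda^2\cu^2 c_2^\theta))$, and $\exp(-\mathbf K/(\cu^2 c_2^\theta))$ respectively, each multiplied by the same $1/(\sqrt{c_2}\epsilon^{1/\theta})$ prefactor coming from subdividing $\T$ into intervals of length $\sqrt{c_2}\epsilon^{1/\theta}$. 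Using the crude bound $\max\{1, \Lambda^2\} \le 1+\Lambda^2$, all three double-exponential factors can be absorbed into a single factor $\exp(-\mathbf K_{10}/((1+\Lambda^2)\cu^2 c_2^\theta))$ with $\mathbf K_{10}$ depending only on $\theta$, yielding
\[Q(B_0^{\ha}) \ge \exp\!\left(-\tfrac{1}{2\sqrt{c_2}\,\epsilon^{1/\theta}}\exp\!\left(-\tfrac{\mathbf K_{10}}{(1+\Lambda^2)\cu^2 c_2^\theta}\right)\right)\]
for $c_2$ sufficiently small. Squaring and combining with the Radon--Nikodym bound finishes the argument.

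The main obstacle will be the careful rescaling bookkeeping: it is the spatial H\"older threshold $\epsilon/(8\Lambda)$ that forces the $\Lambda^2$ in the denominator of the inner exponent, and one must verify that the Gaussian correlation inequality together with the subdivision schemes in the proofs of Lemmas \ref{lem:N:Nt} and \ref{lem:Nh} remain valid when imposed jointly with the temporal H\"older constraint. The threshold $\tilde\alpha_0^{\ha}$ must then be chosen small enough to simultaneously validate all tail estimates at the rescaled parameters above and to absorb the prefactor constants into a single $\mathbf K_{10}$.
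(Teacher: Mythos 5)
Your proposal is correct and follows essentially the same route as the paper: the same change of measure $Q$ with the representation \eqref{eq:holder:1}, the same splitting of the temporal increment into the deterministic part (controlled via Lemmas \ref{lem:hold:det} and \ref{lem:hold:det:t} and the hypotheses on $u_0$, vanishing at $t_1$) plus $N_1^{\ha}$, the Gaussian correlation inequality to decouple the three noise events and re-run Lemmas \ref{lem:N:Nt} and \ref{lem:Nh} on intervals of length $\sqrt{c_2}\,\epsilon^{1/\theta}$ (producing the $(1+\Lambda^2)$ in the inner exponent), and the Cauchy--Schwarz/Radon--Nikodym bound $\exp\bigl(\tfrac{1}{64\cl^2 c_2^{1/2+\theta}\epsilon^{1/\theta}}\bigr)$. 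The only deviations are harmless bookkeeping choices of the threshold split (e.g.\ $\epsilon/4$ rather than $\epsilon/8$ for the $N_1^{\ha}$ event).
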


\begin{proof} We work with the measure $Q$ constructed in the proof of Lemma \ref{lem:B0}. From \eqref{eq:holder:1} we have 
\begin{align*}
 &u(t,x) - u(s,x) \\
 &= \left\{\left[1-\frac{t}{t_1}\right] (G_t*u_0) (x) +N_1(t,x)\right\} -\left\{\left[1-\frac{s}{t_1}\right] (G_s*u_0) (x)+N_1(s,x)\right\},
 \end{align*}
 where we recall 
 \[ N_1(t,x) = \int_0^t\int_{\T} G(t-r, x-z) \sigma(r,z) \dot{\widetilde{W}}(dr dz). \]
 Define 
 \[N_1^{\ha}(s,t,x) := \frac{N_1(t,x)-N_1(s,x)}{|t-s|^{\frac14-\frac\theta 2}} .\]
For $s,t\in [0,t_1]$
\begin{align*}
 \frac{|u(t,x) -u(s,x) |}{|t-s|^{\frac14-\frac\theta 2}} & \le \left[1+\frac{s}{t_1}\right] \frac{|(G_t*u_0) (x)-(G_s*u_0) (x)|}{|t-s|^{\frac14-\frac\theta 2}} \\
 &\qquad +\frac{|t-s|}{t_1} \frac{ |(G_t*u_0)(x)|}{|t-s|^{\frac14-\frac\theta 2}} +|N_1^{\ha}(s,t,x)|.
\end{align*}
The first term on the right is less than $\frac\epsilon 4$ thanks to Lemma \ref{lem:hold:det:t}. The second term is less than $\frac\epsilon 8$ by the assumption on the initial profile. Now
\begin{align*}
 Q(B_0^{\ha}) &\ge  Q\bigg(\sup_{t\le t_1, \, x\in \T} |N_1(t,x)|\le \frac{\epsilon^{\frac{1}{2\theta}}}{8c_2^{\frac\theta 2-\frac14}},\; \sup_{t\le t_1,\; x\ne y \in \T}\big\vert \widetilde N_1(t,x,y)\big\vert \le \frac{ \epsilon}{8\Lambda} ,\\
 &\hspace{3cm}  \sup_{x\in \T,\, s\ne t \in [0,t_1]} |N_1^{\ha}(s,t,x)|\le \frac{\epsilon}{8} \bigg)  \\
 & \ge Q\bigg(\sup_{t\le t_1, \, x\in \T} |N_1(t,x)|\le \frac{\epsilon^{\frac{1}{2\theta}}}{8c_2^{\frac\theta 2-\frac14}},\; \sup_{t\le t_1,\; x\ne y \in \T}\big\vert \widetilde N_1(t,x,y)\big\vert \le \frac{ \epsilon}{8\Lambda} \bigg) \\
& \hspace{3cm}\times Q\bigg( \sup_{x\in \T,\, s\ne t \in [0,t_1]}|N_1^{\ha}(s,t,x)| \le \frac{\epsilon}{8} \bigg) 
\end{align*}
by the Gaussian correlation inequality. By splitting the interval $\T$ into smaller intervals of length $c_2^{\frac12} \epsilon^{\frac1\theta}$ and using Gaussian correlation inequality repeatedly (see also Lemma \ref{lem:Nh} and Lemma \ref{lem:N:Nt}) one obtains
\be \label{eq:q:bha:lbd}  Q(B_0^{\ha}) \ge  \exp\left(-\frac{1}{c_2^{\frac12}\epsilon^{\frac1\theta}} \exp\left(-\frac{\mathbf K_{11}}{(1+\Lambda^2) \cu^2c_2^\theta}\right)\right)\ee
as long as $c_2$ is small enough. 
%We have by the Gaussian correlation inequality
%\[ P\left(B_0^{\ha}\right) \ge P\left(U_0^{\ha}\cap H_0^{\ha}\right)\cdot P\left(T_0^{\ha}\right) .\]
%Using Lemma \ref{lem:Nh} we obtain 
%\begin{align*}
% P\left(T_0^{\ha}\right) & =P\left( \sup_{0\le s, t\le (c_0 8^{\frac 2 \theta})\cdot(\frac \epsilon 8)^{\frac 2\theta}}
%\; \sup_{x\in \T}\left|N^{\ha}(s,t,x) \right|\le \frac\epsilon 8\right)\\
% & \ge  \exp\left(-\frac{1}{c_0^{\frac12}\epsilon^{\frac1\theta}}\exp\left(-\frac{\mathbf K_9}{64\cu^2c_0^{\theta}}\right)\right) 
 %\end{align*}
%as long as {\color{red} $c_08^{\frac2\theta}\le \alpha_0^{\ha}(\theta,\cu)$}. 
Following the arguments of Lemma \ref{lem:B0} we obtain
\[ P\left(B_0^{\ha}\right) \ge  Q\left(B_0^{\ha}\right)^2 \left\{ E_P\left(\frac{dQ}{dP}\right)^2\right\}^{-1} \]
where $Q$ is the measure constructed there. As in \eqref{eq:rn} we have
\[ E_P\left(\frac{dQ}{dP}\right)^2= \exp \left(\int_0^{t_1}\int_{\T}\left|\frac{1}{\sigma(s,y)}\cdot \frac{(G_s*u_0)(y)}{t_1}\right|^2dy ds\right) \le \exp\left(\frac{1}{64\cl^2c_2^{\frac12+\theta}\epsilon^{\frac1\theta}} \right).\]
Using the above along with \eqref{eq:q:bha:lbd}, the proof is complete.
\end{proof}

\subsection{Lower bound in Theorem \ref{thm2} (\lowercase{a})} \label{sec:lb:nonlinear}
We begin by describing the idea  behind the proof first.  The same idea will be used for the proof of Theorem \ref{thm3}. We will consider the following modifications of the temporal discretisation given by \eqref{eq:t:x},
 \begin{align} \label{ti}
 t_i= ic_0 \delta^{2+\eta},\qquad  i=0,\, 1,\, \cdots, I:= \left[\frac{T}{c_0\delta^{2+\eta}}\right].
%& x_j= j c_1\delta,\qquad j=0,\,1,\, \cdots, J:= \left[\frac{1}{c_1\delta}\right].
\end{align}
Define 
\[ R_i :=  \left\{|u(t_{i+1}, x)| \le \frac{ \epsilon^{\frac{1}{2\theta}}}{3} \text{ for all } x\in \T, \text{ and } |u(t,x)|\le \epsilon^{\frac{1}{2\theta}} \text{ for all } t\in [t_i, t_{i+1}],\, x\in \T\right\},\]
and 
\[  S_i:=\left\{\mathcal{H}_{t_{i+1}}^{(\theta)}(u)\le \frac\epsilon 3, \text{ and } \mathcal H^{(\theta)}_t(u)\le \epsilon \text{ for all } t\in [t_i, t_{i+1}]\right\}.\] 
We consider the event
\[ A_i=  R_i\cap  S_i.\]
Our goal is to provide a lower bound on $P(A_i)$. By the Markov property it is sufficient to obtain a lower bound on $P(A_0)$ under the assumption that the initial profile satisfies $|u_0(x)|\le \epsilon^{\frac{1}{2\theta}}/3$ and $\HO^{(\theta)}(u_0) \le \epsilon/3$. 

Consider the evolution of $u(t,\cdot)$ in $[0, t_1]$ and write
\[  u(t,x) = u_g(t,x)+ D(t,x), \]
where $u_g(t,x)$ solves
\[ \partial_t u_g(t,x) = \frac12 \partial_x^2 u_g(t,x) + \sigma\big(t,x,u_0(x)\big) \cdot \dot{W}(t,x),\quad t\in \R_+,\; x\in \T,\]
with initial profile $u_0(x)$. Note that the third coordinate in $\sigma$ is now $u_0(x)$ and therefore $u_g$ is a Gaussian random field. Therefore if we define as in \eqref{eq:B}
\[  B_0^{(g)}=  U_0^{(g)}\cap  H_0^{(g)}, \]
with $U_0^{(g)}$ and $H_0^{(g)}$ defined similarly as in \eqref{eq:U} and \eqref{eq:H} but for the process $u_g$ in place of $u$, and with the new value of $t_1=\delta^{2+\eta}=\epsilon^{(2+\eta)/\theta}$. 

Now $B_0^{(g)}\supset \widetilde{B}_0^{(g)}$, where $\widetilde{B}_0^{(g)}$ is similar to \eqref{eq:B} but with $u$ replaced by $u^{(g)}$, $\epsilon$ replaced by $\widetilde\epsilon=\frac\epsilon 8$, and $t_1=c_0(\widetilde\epsilon)^{\frac2\theta}$. Therefore
\be \label{eq:bg} P(B_0^{(g)}) \ge  \exp\left\{-\frac{2\cdot 8^{\frac1\theta}}{\sqrt c_0 \epsilon^{\frac1\theta}} \exp\left(-\frac{\mathbf K_6}{36 \cu^2c_0^\theta}\right)-\frac{2\cdot 8^{\frac1\theta}}{9c_0\cl^2\epsilon^{\frac1\theta}} \right\}\ee
when  $ c_0 6^{\frac2\theta}<\tilde \alpha_0$. The difference between $u$ and $u_g$ is 
\[ D(t,x) =\int_0^{t} \int_{\T} G(t-s, x-y)\cdot  \left[\sigma\big(s,y, u(s,y)\big)-\sigma\big(s,y, u_0(y)\big)\right] W(ds dy). \]
Consider the set 
\be\label{eq:V} V:=\left\{|D(t,x)|\le \frac{\epsilon^{\frac{1}{2\theta}}}{6} \text{ for all } t\in [0,t_1],\, x\in \T\right\} \cap \left\{\mathcal H_t^{(\theta)}(D)\le \frac{\epsilon}{6} \text{ for all } t\in [0,t_1]\right\}.\ee
Define now 
\[\tau :=\inf\left\{t\ge 0: \left|u(t,x) -u_0(x)\right|\ge 2\epsilon^{\frac{1}{2\theta}} \text{ for some } x\in \T\right\},\]
and let 
\be\label{eq:D:til}\widetilde D(t,x) := \int_0^{t} \int_{\T} G(t-s, x-y)\cdot\left[\sigma\big(s,y, u(s\wedge \tau,y)\big)-\sigma\big(s,y, u_0(y)\big)\right] W(ds dy).\ee
Let the event $\widetilde V$ be the same as $V$ (see \eqref{eq:V}) but with $D$ replaced by $\tilde D$. Now 
\be\label{eq:A0}
\begin{split}
P(A_0) &\ge P\left( B_0^{(g)} \cap V\right) \\
&= P\left( B_0^{(g)} \cap \widetilde V\right) \\
&\ge P\left(B_0^{(g)}\right) -P\left(\widetilde V^c\right) \\
&\ge P\left(B_0^{(g)}\right) - P\left(\sup_{\stackrel{0\le t\le t_1}{x\in \T}}|\widetilde D(t,x)|>\frac{\epsilon^{\frac{1}{2\theta}}}{6}\right) -P\left(\sup_{0\le t\le t_1} \mathcal H_t^{(\theta)}(\widetilde D) >\frac{\epsilon}{6} \right).
\end{split}
\ee
The equality holds because on the event $A_0$ we have $\|u(t,\cdot) -u_0\|_\infty<2\epsilon^{\frac{1}{2\theta}}$ (recall that our initial profile is everywhere less than $\epsilon^{\frac{1}{2\theta}}/3$), and so $D(t,\cdot)=\widetilde D(t,\cdot)$ up to time $t_1$ on the event $A_0$. Now we use Remark \ref{rem:N:tail} together with the fact that now $t_1=c_0\epsilon^{\frac{2}{\theta}+\frac\eta\theta}$ to obtain 
\be\label{eq:Dt}
\begin{split}
P\left(\sup_{\stackrel{0\le t\le t_1}{x\in \T}}|\widetilde D(t,x)|>\frac{\epsilon^{\frac{1}{2\theta}}}{6}\right) &\le \sum_{i=1}^{c_0^{-\frac12}\epsilon^{-\frac1\theta}}  P\left(\sup_{\stackrel{0\le t\le t_1}{x\in \left[(i-1)\sqrt c_0\epsilon^{\frac1\theta}\,, i\sqrt c_0\epsilon^{\frac1\theta}\right]}}|\widetilde D(t,x)|>\frac{1}{6c_0^{\frac14}}\left(c_0^{\frac14}\epsilon^{\frac{1}{2\theta}}\right) \right) \\
& \le \frac{\mathbf K_1}{\sqrt c_0\epsilon^{\frac1\theta +\frac{\eta}{2\theta}}} \exp \left(-\frac{\mathbf K_2 }{144\sqrt c_0\mathscr D^2\epsilon^{\frac1\theta+\frac{\eta}{2\theta}}}\right).
\end{split}
\ee
Next we focus on the last term in \eqref{eq:A0}. We divide $\T^2$ into squares $S$ of side length $\sqrt c_0 \epsilon^{\frac1\theta}$. 
Let 
\[\widetilde N^{(\widetilde D)}(t,x,y) := \frac{\widetilde D(t,x)-\widetilde D(t,y)}{|x-y|^{\frac12-\theta}}.\]
Using Lemma \ref{lem:Nt:tail} (more specifically, Remark \ref{rem:Nt:tail}) we obtain
\be \label{eq:HDt}
\begin{split}
P\left(\sup_{0\le t\le t_1} \mathcal H_t^{(\theta)}(\widetilde D) >\frac{\epsilon}{6} \right) & \le \frac{1}{c_0\epsilon^{\frac2\theta}} \cdot \sup_S P\left(\sup_{\stackrel{0\le t\le c_0\epsilon^{\frac2\theta+\frac{\eta}{\theta}}}{(x, y) \in S,\,x\ne y}} |\widetilde N^{(\widetilde D)}(t,x,y)|>\frac{\epsilon}{6}\right) \\
&= \frac{1}{c_0\epsilon^{\frac2\theta}} \cdot \sup_S P\left(\sup_{\stackrel{0\le t\le c_0\epsilon^{\frac2\theta+\frac{\eta}{\theta}}}{(x, y) \in S,\,x\ne y}} |\widetilde N^{(\widetilde D)}(t,x,y)|>\frac{1}{6c_0^{\frac\theta 2}}\cdot c_0^{\frac\theta2}\epsilon\right)\\
&\le \frac{\mathbf K_3}{c_0\epsilon^{\frac2\theta+\frac{\eta}{2\theta}}} \exp \left(-\frac{\mathbf K_4}{144 c_0^\theta\mathscr D^2\epsilon^{\frac1\theta+\eta}}\right).
\end{split}
\ee
We plug in the bounds \eqref{eq:HDt}, \eqref{eq:Dt} and \eqref{eq:bg} into \eqref{eq:A0} to obtain 
\bes
\begin{split}
P(A_0) &\ge  \exp\left\{-\frac{2\cdot 8^{\frac1\theta}}{\sqrt c_0 \epsilon^{\frac1\theta}} \exp\left(-\frac{\mathbf K_6}{36 \cu^2c_0^\theta}\right)-\frac{2\cdot 8^{\frac1\theta}}{9c_0\cl^2\epsilon^{\frac1\theta}} \right\} \\
&\qquad \quad- \frac{\mathbf K_1}{\sqrt c_0\epsilon^{\frac1\theta +\frac{\eta}{2\theta}}} \exp \left(-\frac{\mathbf K_2 }{144\sqrt c_0\mathscr D^2\epsilon^{\frac1\theta+\frac{\eta}{2\theta}}}\right)-\frac{\mathbf K_3}{c_0\epsilon^{\frac2\theta+\frac{\eta}{2\theta}}} \exp \left(-\frac{\mathbf K_4}{144 c_0^\theta\mathscr D^2\epsilon^{\frac1\theta+\eta}}\right). 
\end{split}
\ees
The last two terms are much smaller than the first term for small $\epsilon$. Therefore $P(A_0) \ge P(B_0^{(g)})/2$ when $\epsilon$ is small enough. We thus have a lower bound on $P(A_i)$ for all $i$. As mentioned earlier, the proof of \eqref{eq:unif:t:3} then follows from the Markov property.
\qed

\subsection{Lower bound in Theorem \ref{thm2} (\lowercase{b})} The argument follows that of Section \ref{sec:lb:nonlinear} with some modifications.  Now let $t_i=ic_2\delta^{2+\eta\theta}=\epsilon^{\frac{2}{\theta}+\eta}$. Similar to before, for $t\in [t_i, t_{i+1}]$, we write $u(t,x)= u_g^{(i)}(t,x) + D^{(i)} (t,x)$. Here 
\begin{align*}
\partial_t u_g^{(i)}& = \frac12 \partial_x^2 u_g^{(i)} + \sigma\left(t,x, u(t_i,x)\right) \dot{W}(t,x),\qquad t\in [t_i, t_{i+1}], \\
 u_g^{(i)}(t_i, \cdot)  &\equiv u(t_i, \cdot),
\end{align*}
and 
\bes
D^{(i)}(t,x) = \int_{t_i}^t\int_{\T} G(t-s, x-y) \left[\sigma(s, y, u(s,y)) -\sigma(s,y, u(t_i,y)\right] W(dyds).
\ees
Now define 
\[ B_i^{(g),\ha} = U_i^{(g), \ha} \cap H_i^{(g),\ha}\cap T_i^{(g),\ha},\]
where $ H_i^{(g),\ha},\, T_i^{(g),\ha}$ are as in \eqref{eq:H:ha} and \eqref{eq:T:ha} but with $u_g$ in place of $u$, and with the $t_i = ic_2\delta^{2+\eta\theta}=ic_2\epsilon^{\frac2\theta  +\eta}$. On the other hand we define
\begin{align*} U_i^{(g),\ha} &:= \bigg\{\sup_{x\in \T} \left|u_g^{(i)}(t_{i+1}, x)\right| \le \frac{ \epsilon^{\frac{1}{2\theta}+\eta(\frac14-\frac\theta 2)}}{8c_2^{\frac{\theta}{2} -\frac14}} , \\
&\qquad\qquad  \text{ and }\sup_{x\in \T} \left|u_g^{(i)}(t,x)\right|\le \frac{ \epsilon^{\frac{1}{2\theta}+\eta(\frac14-\frac\theta 2)}}{4c_2^{\frac{\theta}{2} -\frac14}} \text{ for all } t\in [t_i, t_{i+1}]\bigg\}.
\end{align*}
Now let 
\[V_i^{\ha} = V_{i,1}^{\ha} \cap V_{i,2}^{\ha}, \]
where
\begin{align*}
V_{i,1}^{\ha} &= \left\lbrace \sup_{\stackrel{x\in \T}{t\in [t_i, t_{i+1}]}}|D^{(i)}(t,x) |\le \frac{\epsilon^{\frac{1}{2\theta}+\eta(\frac14-\frac\theta 2)}}{4 c_2^{\frac\theta 2-\frac14}}\right\rbrace \\
V_{i,2}^{\ha} &=\left\{\sup_{\stackrel{x\in \T}{t_i\le s, t\le t_{i+1},\, s\ne t }} \frac{\left |D^{(i)}(t,x)-D^{(i)}(s,x)\right|}{|t-s|^{\frac14-\frac\theta 2}} \le \frac{\epsilon}{2}\right\}.
\end{align*}
It follows from arguments similar to Lemma \ref{lem:hold:time} that 
\be \label{eq:contain}\bigcap_{i=0}^I \mathscr{B}_i^{\ha} \subset \left\lbrace \sup_{x\in \T} \HS_x^{(\theta)}(u) \le 2\epsilon\right\rbrace,
\ee
where 
\[ \mathscr{B}_i^{\ha}:=B_i^{(g),\ha} \cap V_i^{\ha}.\]
By the Markov property it is enough to give a lower bound on $P(\mathscr B_0^{\ha})$ under the assumption that $ |u_0(x)|\le \frac{\epsilon^{\frac{1}{2\theta}+\eta(\frac14-\frac\theta 2)}}{8c_2^{\frac\theta 2-\frac14}}$ and $\HO_0^{(\theta)}(u) \le \frac{\epsilon}{8\Lambda}$. Let 
\[\tau :=\inf\left\{t\ge 0: \left|u(t,x) -u_0(x)\right|\ge \frac{2}{c_2^{\frac\theta2-\frac14}}\epsilon^{\frac{1}{2\theta}+\eta(\frac14-\frac\theta 2)} \text{ for some } x\in \T\right\},\]
Let $\widetilde V_0^{\ha}$ be defined as $V_0^{\ha}$ but with $\widetilde D^{(0)}$ in place of $D^{(0)}$.  Here $\widetilde D_0$ is as in \eqref{eq:D:til} but with the above $\tau$. 
\begin{align*}
 P(\mathscr B_0^{\ha}) &= P (B_0^{(g),\ha} \cap V_0^{\ha}) \\
 &= P (B_0^{(g),\ha} \cap \widetilde V_0^{\ha})  \\
 &\ge P\left(B_0^{(g),\ha}\right) - P\left((\widetilde V_{0,1}^{\ha})^c\right) - P\left((\widetilde V_{0,2}^{\ha})^c\right)
 \end{align*}
Using Remark \ref{rem:N:tail} and the argument in \eqref{eq:Dt} we obtain 
\begin{align}\label{eq:v01}
 P\left((\widetilde V_{0,1}^{\ha})^c\right)  &= P\left(\sup_{\stackrel{0\le t\le t_1}{x\in \T}}|\widetilde D^{(0)}(t,x)|>\frac{\epsilon^{\frac{1}{2\theta}+\eta(\frac14-\frac\theta 2)}}{4 c_2^{\frac\theta 2-\frac14}}\right) \\
 \nonumber& \le \frac{\mathbf K_1}{\sqrt c_2\epsilon^{\frac1\theta +\frac{\eta}{2}}} \exp \left(-\frac{\mathbf K_2 }{64\sqrt c_2\mathscr D^2\epsilon^{\frac1\theta+\frac{\eta}{2}}}\right).
\end{align}
Similarly using Remark \ref{rem:Nh:tail} we obtain 
\be\label{eq:v02} P\left((\widetilde V_{0,2}^{\ha})^c\right)  \le  \frac{\mathbf K_7}{\sqrt c_2\epsilon^{\frac1\theta +\frac{\eta}{2}}} \exp \left(-\frac{\mathbf K_8 }{16\sqrt c_2\mathscr D^2\epsilon^{\frac1\theta+\frac{\eta}{2}}}\right).\ee

\begin{lem} We have when $\epsilon$ is small enough
\[ P\left(B_0^{(g),\ha}\right) \ge \exp\left(-\frac{3}{c_2^{\frac12}\epsilon^{\frac1\theta +\eta(\frac12-\theta)}}\exp\left(-\frac{\mathbf K_2}{128 c_2^{\theta}\cu^2}\right)\right)\]
\end{lem}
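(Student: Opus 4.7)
The plan is to adapt the argument of Proposition \ref{prop:b0:ha} to the modified time window $t_1 = c_2\epsilon^{2/\theta+\eta}$ and the sharper sup-norm target arising from the new definition of $U_0^{(g),\ha}$. First I would set up the same Girsanov change of measure $Q$ as in Lemma \ref{lem:B0}, using the initial profile $u_0$ (satisfying the hypotheses on $\sup|u_0|$ and $\HO^{(\theta)}(u_0)$) over the time interval $[0,t_1]$. Under $Q$ one obtains the decomposition $u_g^{(0)}(t,x) = (1-t/t_1)(G_t*u_0)(x) + N_1(t,x)$, with $N_1$ a centered Gaussian field driven by the shifted white noise. Applying Lemmas \ref{lem:hold:det} and \ref{lem:hold:det:t} together with the assumed bounds on $u_0$, the deterministic piece absorbs at most half of each of the three thresholds defining $B_0^{(g),\ha}$. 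It remains therefore to lower-bound the $Q$-probability that $N_1$ simultaneously satisfies the corresponding sup-norm, spatial H\"older and temporal H\"older bounds over $[0,t_1]\times\T$.

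By the Gaussian correlation inequality this triple intersection factorizes (up to a lower bound) into the product of three individual probabilities, and for each I would partition $\T$ into disjoint intervals of length $L = c_2^{1/2}\epsilon^{1/\theta+\eta/2-\eta\theta}$ (giving roughly $c_2^{-1/2}\epsilon^{-1/\theta-\eta(1/2-\theta)}$ cells) and apply the Gaussian correlation inequality once more to obtain a product over cells. Each cell probability is then bounded using Lemmas \ref{lem:eq:N:tail}, \ref{lem:Nt:tail} and \ref{lem:Nh:tail} respectively. The key scaling computation is that with $\tau = c_2\epsilon^{2/\theta+\eta}$ and $L$ as above, one has $\alpha := \tau/L^2 = \epsilon^{2\eta\theta}$, and a direct check shows that in all three tail bounds the resulting exponent takes the form $c\,\epsilon^{-\eta\theta}/(c_2^{\theta}\cu^2)$ (with $\Lambda$ harmlessly absorbed in the Hölder case); for $\epsilon$ small this exceeds $\mathbf K_2/(128 c_2^\theta\cu^2)$, and the diverging prefactor $\alpha^{-1/2}$ is absorbed by the super-exponentially small cell tail.

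Combining the three product bounds and passing back to $P$ via Cauchy--Schwarz, $P(B_0^{(g),\ha}) \geq Q(B_0^{(g),\ha})^2/E_P[(dQ/dP)^2]$, as in \eqref{eq:pq}, the density moment is controlled using the sup-norm bound on $u_0$ exactly as in \eqref{eq:rn}; this contribution is of lower order than the Gaussian piece and so can be absorbed in the exponent with a slightly enlarged constant, yielding the factor $3$ in the claimed bound. The main obstacle is the scaling check in the second paragraph: $L$ must be chosen so that the number of spatial cells matches the prefactor $3/(c_2^{1/2}\epsilon^{1/\theta+\eta(1/2-\theta)})$ while the per-cell exponent remains bounded below by a constant independent of $\epsilon$, and this choice must be simultaneously compatible with the three different tail estimates (sup, spatial H\"older, temporal H\"older); the fact that $\alpha\to 0$ here (unlike in Proposition \ref{prop:b0:ha}) means some care is needed to verify that the prefactors $1/\sqrt\alpha$ are benign.
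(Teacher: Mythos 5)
Your skeleton is the same as the paper's: the Girsanov change of measure of Lemma \ref{lem:B0}, the absorption of the deterministic part via Lemmas \ref{lem:hold:det} and \ref{lem:hold:det:t}, the Gaussian correlation inequality to factor $Q(B_0^{(g),\ha})$ into the sup-norm, spatial H\"older and temporal H\"older pieces (this is exactly \eqref{eq:q:b0g}), and the return to $P$ via Cauchy--Schwarz and the second moment of $dQ/dP$. Where you genuinely diverge is in the scaling: the paper keeps the per-cell parameter $\alpha=c_2$ fixed by \emph{enlarging the time horizons} (to $\tilde t_1=c_2\epsilon^{\frac2\theta+\eta(1-2\theta)}$ for the sup-norm piece and to $c_2\epsilon^{\frac2\theta}$ for the two H\"older pieces), so that the already-proved fixed-$\alpha$ lower bounds (Lemma \ref{lem:ho:lbd}, Lemma \ref{lem:N:Nt}, Lemma \ref{lem:Nh}) apply with a rescaled $\epsilon$ and directly produce the prefactor $c_2^{-1/2}\epsilon^{-\frac1\theta-\eta(\frac12-\theta)}$ and the constant $\mathbf K_2/(128c_2^\theta\cu^2)$; see \eqref{eq:t:1}--\eqref{eq:t:2}. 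You instead keep the short horizon $t_1=c_2\epsilon^{\frac2\theta+\eta}$ and shrink the spatial cell to $L$ so that $\alpha=\epsilon^{2\eta\theta}\to0$. That is legitimate: Lemmas \ref{lem:eq:N:tail}, \ref{lem:Nt:tail}, \ref{lem:Nh:tail} hold for every $\alpha>0$, your cell count matches the claimed prefactor, and your computation that all three per-cell exponents are of order $\epsilon^{-\eta\theta}/(c_2^\theta\cu^2)$ is correct, so the polynomial prefactors $\alpha^{-1/2}$ are indeed harmless. One small bonus of your route is that all events live on $[0,t_1]$, so the bound \eqref{eq:rn} applies verbatim, whereas the paper must restrict the density to $[0,c_2\epsilon^{\frac2\theta}]$ as in \eqref{eq:t:3} because its enlarged events see noise beyond $t_1$.

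There is, however, one concrete gap. For the spatial H\"older factor the event is $\sup_{t\le t_1,\,x\ne y\in\T}|\widetilde N_1^{(g)}(t,x,y)|\le\epsilon/(8\Lambda)$, where $x$ and $y$ range over \emph{all} of $\T$; a product over disjoint spatial intervals of length $L$ only constrains pairs lying in the same cell, so the GCI factorization you describe does not imply the full event (pairs straddling a cell boundary, and well-separated pairs, are uncontrolled). This is precisely why Lemma \ref{lem:ho:lbd} tiles $\T^2$ (not $\T$) with squares and treats the off-diagonal squares through sup-norm tails of $N$ at level $\sim\epsilon\,(kL)^{\frac12-\theta}$, via Lemma \ref{lem:eq:N:tail}; the paper inherits this automatically by enlarging the horizon to $c_2\epsilon^{\frac2\theta}$ and invoking ``the arguments in Lemma \ref{lem:N:Nt}.'' The fix is available with your scaling too (the off-diagonal tails come out as $\exp\bigl(-c\,k^{1-2\theta}\epsilon^{-\eta\theta(2-2\theta)}/(\Lambda^2c_2^\theta\cu^2)\bigr)$ and are summable), but as written your treatment of this factor by Lemma \ref{lem:Nt:tail} alone is incomplete; the sup-norm and temporal pieces, being intersections over single points $x$, are not affected by this issue.
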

\begin{proof} As in the proof of Proposition \ref{prop:b0:ha}, with this new choice of $t_1=c_2\epsilon^{\frac2\theta+\eta}$ and with 
\[ N_1^{(g)}(t,x) := \int_0^t\int_{\T} G(t-r, x-z) \sigma(r,z, u_0(z)) \dot{\widetilde{W}}(dr dz), \]
$\widetilde N_1^{(g)},\, N_1^{\ha, (g)}$ defined in terms of $N_1^{(g)}$, we have 
\begin{align} \label{eq:q:b0g}
 Q(B_0^{(g),\ha}) & \ge Q\bigg(\sup_{t\le t_1, \, x\in \T} |N_1^{(g)}(t,x)|\le \frac{\epsilon^{\frac{1}{2\theta}+\eta(\frac14-\frac\theta 2)}}{8c_2^{\frac\theta 2-\frac14}},\; \sup_{t\le t_1,\; x\ne y \in \T}\big\vert \widetilde N_1^{(g)}(t,x,y)\big\vert \le \frac{ \epsilon}{8\Lambda} \bigg) \\
\nonumber& \hspace{3cm}\times Q\bigg( \sup_{x\in \T,\, s\ne t \in [0,t_1]}|N_1^{\ha, (g)}(s,t,x)| \le \frac{\epsilon}{8} \bigg) 
\end{align}
A lower bound on the last probability is obtained by taking the supremum of $s\ne t$ over $[0,c_2\epsilon^{\frac2\theta}]$ instead of $[0,t_1]$. This gives  (when $c_2$ is chosen small enough)
 \be \label{eq:t:1}  Q\bigg( \sup_{x\in \T,\, s\ne t \in [0,t_1]}|N_1^{\ha, (g)}(s,t,x)| \le \frac{\epsilon}{8} \bigg)\ge  \exp\left(-\frac{1}{c_2^{\frac12}8^{\frac1\theta}\epsilon^{\frac1\theta}}\exp\left(-\frac{\mathbf K_9}{64\cu^2c_2^{\theta}}\right)\right).\ee
 Next let
 \[ \tilde t_1:= c_2\epsilon^{\frac{2}{\theta} +\eta(1-2\theta)} .\]
 Clearly $\tilde t_1> t_1$ and so a lower bound of the first term on the right of \eqref{eq:q:b0g} is (note the $t_1$ in the sup has been replaced by $\tilde t_1$)
 \begin{align}
 \label{eq:t:2}
 & Q\bigg(\sup_{t\le \tilde t_1, \, x\in \T} |N_1^{(g)}(t,x)|\le \frac{\epsilon^{\frac{1}{2\theta}+\eta(\frac14-\frac\theta 2)}}{8c_2^{\frac\theta 2-\frac14}},\; \sup_{t\le \tilde t_1,\; x\ne y \in \T}\big\vert \widetilde N_1^{(g)}(t,x,y)\big\vert \le \frac{ \epsilon}{8\Lambda} \bigg) \\
\nonumber &\ge  Q\bigg(\sup_{t\le \tilde t_1, \, x\in \T} |N_1^{(g)}(t,x)|\le \frac{\epsilon^{\frac{1}{2\theta}+\eta(\frac14-\frac\theta 2)}}{8c_2^{\frac\theta 2-\frac14}}\bigg)\cdot Q\bigg(\sup_{t\le c_2\epsilon^{\frac2\theta},\; x\ne y \in \T}\big\vert \widetilde N_1^{(g)}(t,x,y)\big\vert \le \frac{ \epsilon}{8\Lambda} \bigg) \\
\nonumber &\ge \exp\left(-\frac{1}{c_2^{\frac12}\epsilon^{\frac1\theta +\eta(\frac12-\theta)}}\exp\left(-\frac{\mathbf K_2}{128 c_2^{\theta}\cu^2}\right)\right)\cdot \exp\left(-\frac{2}{c_2^{\frac12}\epsilon^{\frac1\theta}} \exp\left(-\frac{\mathbf K_5}{64 \Lambda^2\cu^2c_2^{\theta}}\right)\right)
  \end{align}
when $c_2<\frac{C(\theta)}{\max(\cu^4, \cu^{2/\theta})}$ is chosen small enough, using the arguments in Lemma \ref{lem:N:Nt}. We have used the Gaussian correlation inequality in the second step. Note that the sup in $t$ in the second probability is over a larger time interval $[0, c_2\epsilon^{\frac2\theta}]$. 

The event $B_0^{(g),\ha}$ depends on the noise up to time $c_2\epsilon^{\frac2\theta}$, and so 
\begin{align}\label{eq:bog}
P\left(B_0^{(g),\ha}\right) &\ge Q\left(B_0^{(g),\ha}\right)^2\left\{ E_P\left(\frac{dQ}{dP}\bigg\vert_{[0, c_2\epsilon^{\frac2\theta}]}\right)^2\right\}^{-1}.
\end{align}
We have the following upper bound (similar to \eqref{eq:rn}:
\be
\label{eq:t:3} E_P\left(\frac{dQ}{dP}\bigg\vert_{[0, c_2\epsilon^{\frac2\theta}]}\right)^2\le \exp\left(-\frac{1}{64 \cl^2c_2^{\theta+\frac12}\epsilon^{\frac1\theta-\eta(\frac12-\theta)}}\right). \ee
%\begin{align*}
%&\ge \exp\left(-\frac{1}{c_2^{\frac12}\epsilon^{\frac1\theta +\eta(\frac12-\theta)}}\exp\left(-\frac{\mathbf K_2}{128 c_2^{\theta}\cu^2}\right)\right)\cdot \exp\left(-\frac{2}{c_2^{\frac12}\epsilon^{\frac1\theta}} \exp\left(-\frac{\mathbf K_5}{64 \Lambda^2\cu^2c_2^{\theta}}\right)\right) \\
%&\qquad \times \exp\left(-\frac{1}{64 \cl^2c_2^{\theta+\frac12}\epsilon^{\frac1\theta-\eta(\frac12-\theta)}}\right),
%\end{align*}
Plugging in the bounds \eqref{eq:t:1},\eqref{eq:t:2} and \eqref{eq:t:3} into \eqref{eq:bog} we obtain the lemma. 
\end{proof}
From the above lemma as well as \eqref{eq:v01} and \eqref{eq:v02} we obtain 
\[  P(\mathscr B_0^{\ha})  \ge \exp\left(-\frac{4}{c_2^{\frac12}\epsilon^{\frac1\theta +\eta(\frac12-\theta)}}\exp\left(-\frac{\mathbf K_2}{128 c_2^{\theta}\cu^2}\right)\right) \]
when $\epsilon$ is small enough, and thus from \eqref{eq:contain} one gets 
\[ P\left(\sup_{x\in \T} \HS_x^{(\theta)}(u)\le 2\epsilon\right) \ge \exp\left(-\frac{C(\theta,\cu)T}{\epsilon^{\frac3\theta+\eta(\frac32-\theta)}}\right), \]
for some constant $C(\theta,\cu)>0$ dependent only on $\cu$ and $\theta$. This completes the proof of the lower bound since $\eta$ is arbitrary.  \qed

\section{Proof of Theorem \ref{thm3}} \label{sec:thm3}
The proofs of the upper bounds in both statements in Theorem \ref{thm3} are the same as that of Theorem \ref{thm2}. The proof of the lower bounds follows the same ideas as in the proofs for the lower bounds of Theorem \ref{thm2}. We show this only for statement (a); the proof of statement (b) is similar. The only difference as compared to the proof in Theorem 1.2 (a) is that we revert back to the discretisation given by \eqref{eq:t:x}. We therefore have 
\bes
\begin{split}
P(A_0) &\ge  \exp\left\{-\frac{2}{\sqrt c_0 \epsilon^{\frac1\theta}} \exp\left(-\frac{\mathbf K_6}{36 \cu^2c_0^\theta}\right)-\frac{2}{9c_0\cl^2\epsilon^{\frac1\theta}} \right\} \\
&\qquad \quad- \frac{\mathbf K_1}{\sqrt c_0\epsilon^{\frac1\theta}} \exp \left(-\frac{\mathbf K_2 }{144\sqrt c_0\mathscr D^2\epsilon^{\frac1\theta}}\right)-\frac{\mathbf K_3}{c_0\epsilon^{\frac2\theta}} \exp \left(-\frac{\mathbf K_4}{144 c_0^\theta\mathscr D^2\epsilon^{\frac1\theta}}\right). 
\end{split}
\ees
For any fixed $c_0$, $ \cl$ and $\cu$, we can choose $\mathscr D$ small enough so that as $\epsilon$ decreases, the final two term goes to zero much faster than the first term. Therefore for small $\epsilon$ a lower bound on $P(A_0)$ (and hence $P(A_i)$) is one half times the first term above.  An application of the Markov property then finishes the proof. \qed

\section{Proof of Theorem \ref{thm:main}} \label{sec:main}
We first prove the upper bound.  This follows immediately from 
\begin{align*}
 P \left(\sup_{\substack{0\le s, t\le T \\ 0\le x, y\le 1\\(t,x)\ne (s,y)}} \frac{|u(t,x)-u(s,y)|}{|x-y|^{\frac12-\theta}+|t-s|^{\frac14-\frac\theta 2}}\le \epsilon \right)  & \le P\left(\sup_{t\le T} \HO_t^{(\theta)}(u) \le \epsilon\right),
 \end{align*}
and Theorem \ref{thm2}. Let us turn our attention to the lower bound. In the proof of the lower bound of Theorem \ref{thm2} (b), we let
\[V_{i,3}^{\ha} :=\left\{\sup_{\stackrel{x\neq y \in \T}{t_i\le t\le t_{i+1} }} \frac{\left |D^{(i)}(t,x)-D^{(i)}(t,y)\right|}{|x-y|^{\frac12-\theta}} \le \frac{\epsilon}{2\Lambda}\right\},
\]
and redefine
\[    V_i^{\ha} := V_{i,1}^{\ha} \cap V_{i,2}^{\ha}\cap V_{i, 3}^{\ha} \quad \text{and}\quad \mathscr{B}_i^{\ha}:=B_i^{(g),\ha} \cap V_i^{\ha}
\] 
We then have
\[ \bigcap_{i=0}^I \mathscr{B}_i^{\ha} \subset \left\lbrace \sup_{x\in \T} \HS_x^{(\theta)}(u) \le 2\epsilon\right\rbrace  \cap  \left\lbrace \sup_{t\in [0,T]} \HO_t^{(\theta)}(u) \le \frac{\epsilon}{\Lambda}\right\rbrace. 
\] 
In addition, similar to \eqref{eq:v01} and \eqref{eq:v02},
Remark  \ref{rem:Nt:tail} says that 
\[ 
P\left((\widetilde V_{0,3}^{\ha})^c\right)  \le  \frac{\mathbf K_3}{\sqrt c_2\epsilon^{\frac1\theta +\frac{\eta}{2}}} \exp \left(-\frac{\mathbf K_4 }{16\sqrt c_2\Lambda^2 \mathscr D^2\epsilon^{\frac1\theta+\frac{\eta}{2}}}\right), 
\]
Now it is easy to see that 
\begin{align*}
& P\left( \sup_{\substack{0\leq s,t \leq T \\ 0\leq x,  y\leq 1\\  (t, x) \neq (s, y) }} \,  \frac{|u(t, x)-u(s, y)|}{ |t-s|^{\frac{1}{4}-\frac{\theta}{2}} + |x-y|^{\frac12-\frac{\theta}{2}}}  \leq \epsilon\left[2+\frac{1}{\Lambda}\right]   \right)  \\
 & \ge   P\left(\left\lbrace \sup_{x\in \T} \HS_x^{(\theta)}(u) \le 2\epsilon\right\rbrace  \cap  \left\lbrace \sup_{t\in [0,T]} \HO_t^{(\theta)}(u) \le \frac{\epsilon}{\Lambda}\right\rbrace\right).
\end{align*}
It then follows quite easily that  under the same assumptions of Theorem \ref{thm2}, for any $\eta>0$, there exist positive constants $C_1,C_2>0$ dependent on $\cl, \cu,\theta,\eta$ such that
\[    
P\left( \sup_{\substack{0\leq s,t \leq T \\ 0\leq x,  y\leq 1\\  (t, x) \neq (s, y) }} \,  \frac{|u(t, x)-u(s, y)|}{ |t-s|^{\frac{1}{4}-\frac{\theta}{2}} + |x-y|^{\frac12-\frac{\theta}{2}}}  \leq \epsilon   \right) \ge C_1\exp\left(- \frac{C_2 T}{\epsilon^{\frac3\theta+\eta}}\right)
\] 
This finishes the proof of Theorem \ref{thm:main}. \qed
\begin{rem}\label{rem:main} It is easy to see from the argument presented here that under the assumptions of Theorem \ref{thm1} (resp. Theorem \ref{thm3}) we have the same bounds as in \eqref{eq:unif:t} (resp.  \eqref{eq:unif:t:3}) for the H\"older semi-norm. We leave the verification to the reader.
\end{rem}

\section{Proofs of Theorems \ref{thm4} and \ref{thm5}} \label{sec:thm45}

The proof of the lower bound of Theorem \ref{thm4} relies heavily on Theorem 2.2 of \cite{kuel-li-shao}. We will use some notations from its proof and indicate only the main differences. The proofs of Theorem \ref{thm5} follow from Theorem \ref{thm4} using the same arguments used previously to deal with the non-gaussian case.

\begin{proof}[Proof of Theorem \ref{thm4} (\lowercase{a})]The upper bound is a result of Lemma \ref{lem:a1} (recall the event $A_i$ defined in \eqref{eq:ai}); note that the initial profile in Lemma \ref{lem:a1} is arbitrary. Indeed,  we might condition on the profile at time $T-c_0\delta^2$ and conclude from the above lemma that 
\[ P\left[\max_{j=0,1,\cdots, J} \big|u(T, x_j+\delta)- u(T, x_j)\big|\le \epsilon^{\frac{1}{2\theta}}\;\Big\vert\;u(T-c_0\delta^2,\cdot) \right]  \le \eta^{J}, \]
where $0<\eta<1$ and $J=\left[\frac{1}{c_1\delta}\right]$, and $c_0=1, c_1, \delta=\epsilon^{\frac1\theta}$ are as in Section \ref{sec:thm1:ubd}. From this and \eqref{eq:ubd:dis} we obtain 
\[ P\left(\HO_T^{(\theta)}(u) \le \epsilon \,\Big \vert\, u(T-c_0\delta^2,\cdot)  \right) \le \eta^J.\]
Integrating over the profile $u(T-c_0\delta^2,\cdot)$ we obtain the upper bound. 

We next turn our attention to the lower bound. As mentioned above, the proof follows along the lines of the proof of Theorem 2.2 of \cite{kuel-li-shao}, and we just sketch the necessary modifications in the proof. Recall that we assume that our initial profile $u_0\equiv 0$, and therefore 
\[ E\left[\left\{u(T,x) - u(T,y)\right\}^2\right] = E\left[\left\{N(T,x) - N(T,y)\right\}^2\right].\]
Defining $\boldsymbol \sigma^2(\gamma):=E\left[\left\{N(T,x+\gamma) - N(T,x)\right\}^2\right]$ it follows from the proof of \eqref{eq:var:bd} that 
\[ C(T) \gamma \le \boldsymbol \sigma^2(\gamma) \le \sqrt{C_1}\gamma \]
for $\gamma>0$ small enough, where $C(T)$ is a constant dependent on $T$ and $C_1$ is the constant in \eqref{eq:var:bd}. The above is the key ingredient in the proof of the lower bound. We take $\boldsymbol\beta=\theta$ and $f(x)=x^{\frac12-\theta}$ in Theorem 2.2 in \cite{kuel-li-shao}. While it is not true that $\boldsymbol \sigma(x)/x^{\boldsymbol\beta} f(x)$ is nondecreasing in $x$ as in Theorem 2.2 of \cite{kuel-li-shao}, a close examination of the proof reveals that all we require is that $\boldsymbol \sigma(ax)/f(ax) \le C_2 a^{\boldsymbol \beta} \boldsymbol \sigma(x)/ f(x)$ for some positive constant $C_2$, for all $0<a<1$ and $x$ small enough. This clearly holds for us. The sequences $x_l$ and $y_{j,l}$ encountered in the proof there should be modified by multiplying by $\frac{1}{C_2}$. Similarly, while going through the arguments of the lower bounds of the terms $A,B,C$ defined in the paper, one just gets an additional constant multiple inside the exponentials and this does not change the result.  We leave this routine checking to the interested reader. The lower bound in Theorem \ref{thm4} follows immediately from the lower bound of Theorem 2.2 in \cite{kuel-li-shao}. 
\end{proof}

\begin{proof}[Proof of Theorem \ref{thm4} (\lowercase{b})] For the upper bound, let $t_i=i\epsilon^{\frac2\theta},\, i=0,1,\cdots I= T\epsilon^{-\frac2\theta}$.
\[P \left( \HS_X^{(\theta)}\left(u\right) \le \epsilon \right) \le P\left(\frac{|u(t_{i+1},X)-u(t_i, X)|} {(t_{i+1}-t_i)^{\frac14-\frac\theta 2}} \le \epsilon,\;\; \text{ for all } i=0,1,\cdots , I\right). \]
By considering the profile at time $t_i$ we obtain 
\[ u(t_{i+1},X) =\left( G_{t_i-t_{i+1}} * u(t_i,\cdot)\right) (X) + \mathcal N(t_i, t_{i+1}, X).\]
Note that $\mathcal N(t_i, t_{i+1}, X)$ is really the noise term from time $t_i$ to $t_{i+1}$, that is thinking of time $t_i$ as the new time {\it zero} . Similar to arguments used a few times in this paper we have
\begin{align*}P\left(\frac{|u(t_{i+1},X)-u(t_i, X)|} {(t_{i+1}-t_i)^{\frac14-\frac\theta 2}} \le \epsilon \;\Big \vert\; u(s,\cdot), \, s\le t_i\right)& \le P\left(\frac{|\mathcal N(t_i, t_{i+1}, X)|} {(t_{i+1}-t_i)^{\frac14-\frac\theta 2}} \le \epsilon \Big \vert \; u(s,\cdot), \, s\le t_i\right), 
\end{align*}
which is bounded uniformly (in $i$) by a number less than $1$ (note that the variance of $ \mathcal N(t_i, t_{i+1}, X)$ is bounded above and below by constant multiples of $\epsilon^{\frac{1}{2\theta}}$). The Markov property then gives the upper bound.

 Consider the process $Y_t:= u(tT,X),\, 0\le t\le 1$. As we are under the assumption $u_0\equiv 0$ we have
 \[ E\left[\left(Y_t - Y_s\right)^2\right] = E\left[\left\{N(tT,X) - N(sT,X)\right\}^2\right].\]
Defining $\boldsymbol \sigma^2(\gamma):=E\left[\left(Y_{t+\gamma} - Y_t\right)^2\right] $ and using \eqref{eq:var:t_incr} and \eqref{eq:var:t:g-g} one obtains
\[ C_1 \sqrt{T\gamma} \le \boldsymbol \sigma^2(\gamma) \le C_2\sqrt{T\gamma} \]
for constants $C_1, C_2$ independent of $T$. One can then follow the argument of the lower bound of Theorem \ref{thm4} for the process $Y_t$, now with $f(x) = x^{\frac14-\frac\theta 2}$ and $\boldsymbol \beta=\frac\theta 2$. 
\end{proof}

\begin{proof}[Proof of Theorem \ref{thm5} (\lowercase{a})] The proof of the upper bound is similar to Theorem \ref{thm2} but instead we use \eqref{eq:prob-main} and note that this bound is uniform over the initial profiles $u_0$. We can then conclude 
\[ P\left(\HO_T^{(\theta)}(u) \le \epsilon \,\Big \vert\, u(T-c_0\epsilon^{\frac2\theta},\cdot)  \right) \le \exp\left(-\frac{C}{|\log \epsilon|^{\frac32} \epsilon^{\frac1\theta}}\right).\]
Now integrate over the profile at time $T-c_0\epsilon^{\frac2\theta}$.\end{proof}

\begin{proof}[Proof of Theorem \ref{thm5} (\lowercase{b})] 
The proof is very similar to that of the proof of the upper bound of Theorem \ref{thm4} (\lowercase{b}). The only difference is that now $\mathcal N(t_i, t_{i+1}, X)$ is no longer Gaussian. For $t_i\leq s\leq t_{i+1}$, we note that 
\begin{equation*}
\mathcal N(t_i, s, X)=\int_{t_i}^s \int_{\T} G_{t_{i+1}-r}(X, y)  \cdot \sigma\left(r, y, u(r,y)\right) W(dy dr)
\end{equation*}
is a martingale. Similar arguments to that of the proof of \eqref{eq:diff-u-x} and an application of the Markov property complete the proof.

\end{proof}

\section{Some extensions} \label{sec:ext}
In this section, we provide \emph{support theorems} in the H\"older semi-norm, which are similar to the support theorem in the sup norm in \cite{athr-jose-muel}.  We provide  probabilities  that the solution $u$ stays close to a function in H\"older spaces $C^{\gamma, \beta}$ (see Theorems \ref{thm-support-space} and \ref{thm-support-time} for the precise statements). These theorems are of a different flavour from the support theorem proved in \cite{ball-mill-sanz}, where a description of the support set of the solution is given. 

 We first consider small ball probabilities of \eqref{eq:she} with \emph{nice} drifts.  By means of a change of measure argument, we can show that all of our results are still valid when we add a bounded drift term to the equation. Consider
\begin{equation}\label{eq:she-drift}
\begin{split} \partial_t u (t,x)&= \frac12\,\partial_x^2 u(t,x) + g(t,x,u)+\sigma\big(t,x, u(t,x)\big) \cdot \dot{W}(t,x),\quad t
\in \R_+,\, x\in \T, 
%u(0,\cdot) &\equiv 0,
\end{split}
\end{equation}
The proof of the following theorem follows exactly the argument given in Section 2.2 of \cite{athr-jose-muel} and is left to the reader to verify. 
\begin{thm} \label{thm:drift} Consider \eqref{eq:she-drift}, where the assumptions \eqref{eq:ellip} and \eqref{eq:lip} on $\sigma$ hold, and $g(t,x,u):\R^+\times \T\times \R\rightarrow \R$ is bounded in absolute value by a constant $\mathbb G$ and globally Lipschitz in the third variable (that is, there is a $\tilde \du$ such that $|g(t,x,v)-g(t,x,w)|\le \tilde \du |v-w|$). Then the statements of Theorems \ref{thm:main}, \ref{thm1}, \ref{thm2} and \ref{thm3}  still hold, with the constants now depending additionally on $\mathbb G$ but not on $\tilde \du$.
\end{thm}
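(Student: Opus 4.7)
The plan is a Girsanov-style change of measure that kills the drift $g$ and reduces Theorem \ref{thm:drift} to the main theorems already proved in the paper. Define
\[
\phi(s,y) := \frac{g(s,y,u(s,y))}{\sigma(s,y,u(s,y))},
\]
which is bounded in absolute value by $\mathbb G/\mathscr C_1$ by Assumption \ref{assump1} and the hypothesis on $g$. Introduce the measure $Q$ via
\[
\frac{dQ}{dP} = \exp\!\left(-\int_0^T\!\!\int_{\T} \phi(s,y)\, W(ds\,dy) - \frac{1}{2}\int_0^T\!\!\int_{\T} \phi(s,y)^2\, dy\,ds\right).
\]
Novikov's condition is automatic, so $Q$ is a probability measure and, by Girsanov, $\dot{\widetilde W} := \dot W + \phi$ is a space-time white noise under $Q$. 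Substituting this into \eqref{eq:she-drift} shows that under $Q$ the process $u$ solves the driftless equation \eqref{eq:she} driven by $\widetilde W$, with the same coefficient $\sigma$ and the same initial profile $u_0$.

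Next I would transfer probabilistic estimates between $P$ and $Q$. Uniform boundedness of $\phi$ yields
\[
E_P\!\left[(dQ/dP)^2\right] \le \exp(\mathbb G^2 T/\mathscr C_1^2), \qquad E_Q\!\left[(dP/dQ)^2\right] \le \exp(\mathbb G^2 T/\mathscr C_1^2),
\]
and then Cauchy--Schwarz gives, for any event $A$,
\[
P(A)\ge Q(A)^2 \exp(-\mathbb G^2 T/\mathscr C_1^2),\qquad P(A)\le Q(A)^{1/2}\exp(\mathbb G^2 T/(2\mathscr C_1^2)).
\]
Applying Theorems \ref{thm:main}--\ref{thm3} to $u$ under $Q$ produces the desired upper and lower bounds on $Q(A)$, which then push through these two inequalities to bounds of the same functional form on $P(A)$. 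The Girsanov overhead $\exp(\mathbb G^2 T/\mathscr C_1^2)$ is linear in $T$ with no inverse power of $\epsilon$, whereas the leading exponent in the target bounds is of order $T/\epsilon^{3/\theta}$ (with the usual $|\log\epsilon|^{3/2}$ or $+\eta$ corrections), so for $\epsilon$ small enough (permitted by the remark at the end of the introduction) the overhead is absorbed into a slightly worse constant in the leading exponent.

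The observation that the resulting constants depend on $\mathbb G$ but not on $\tilde\du$ is immediate from this setup: the Girsanov factor is controlled by $\|\phi\|_\infty \le \mathbb G/\mathscr C_1$, which involves only $\mathbb G$ and $\mathscr C_1$, while $\tilde\du$ enters nowhere in the probabilistic estimates -- it is needed only to guarantee existence and uniqueness of solutions to \eqref{eq:she-drift} via a standard Picard iteration. I do not expect any genuine obstacle here; the change-of-measure bootstrap is already a workhorse of the paper (cf.\ Lemma \ref{lem:B0} and Section \ref{sec:lb:nonlinear}), and this is essentially a direct recycling of that technique applied once globally rather than on each small-time slab.
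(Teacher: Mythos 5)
Your proposal is correct and is essentially the paper's own (omitted) argument: the paper simply defers to the change-of-measure reduction of Section 2.2 of \cite{athr-jose-muel}, which is exactly your global Girsanov removal of the drift followed by the Cauchy--Schwarz transfer $P(A)\ge Q(A)^2 e^{-\mathbb G^2 T/\mathscr C_1^2}$ and $P(A)\le Q(A)^{1/2}e^{\mathbb G^2 T/(2\mathscr C_1^2)}$, with the $e^{CT}$ overhead absorbed into the $T/\epsilon^{3/\theta}$-type exponents for small $\epsilon$. Your observations on uniqueness in law under $Q$, the dependence on $\mathbb G$ but not $\tilde{\mathscr D}$, and the reduction to small $\epsilon$ are all in line with what the paper intends.
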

 Note that the Lipschitz condition on $g$ is just to gaurantee uniqueness and existence of solutions to \eqref{eq:she-drift}. We also have the following result which is analogous to that of Theorem 1.2 of \cite{athr-jose-muel}. %The proofs are similar to those presented in \cite[Section 2.1]{athr-jose-muel} and are therefore omitted.
\begin{prop}\label{prop-support}
Consider the solution to \eqref{eq:she}. Let $h:\R^+\times \T \rightarrow \R$ be a smooth function such that $h, \partial_t h$ and $\partial_x^2h$ are uniformly bounded by a constant $H$. Let $0<\theta<\frac12$ and $0<\epsilon<1$ and suppose that $\HO_0^{(\theta)}(u-h) \le \frac{\epsilon}{2} \left(1\wedge  \frac{1}{2\Lambda}  \right)$ where $\Lambda$ is given in \eqref{eq:lambda}.  
\begin{itemize}
\item[(a)] Suppose that the function $\sigma(t,x,u)$ is independent of $u$ but satisfies Assumption \ref{assump1}.  Then there exist positive constants $C_1, C_2, C_3, C_4>0$ depending on $\mathscr{C}_1, \mathscr{C}_2, \theta$ and $H$ such that 
\begin{equation*}  C_1\exp\left(- \frac{C_2 T}{\epsilon^{\frac3\theta}}\right) \le P \left(\sup_{0\le t\le T} \HO_t^{(\theta)}\left(u-h\right) \le \epsilon \right)\le C_3\exp\left(- \frac{C_4T}{\epsilon^{\frac 3\theta}}\right),
\end{equation*} and

\begin{equation*}  C_1\exp\left(- \frac{C_2 T}{\epsilon^{\frac3\theta}}\right) \le P \left(\sup_{x\in \T} \HS_x^{(\theta)}\left(u-h\right) \le \epsilon \right)\le C_3\exp\left(- \frac{C_4T}{\epsilon^{\frac 3\theta}}\right).
\end{equation*}

\item[(b)] Suppose that $\sigma(t,x,u)$ is now dependent on $u$ and satisfies both Assumptions \ref{assump1} and \ref{assump2}. Then for any $\eta>0$, there exist positive constants $C_1, C_2, C_3, C_4>0$ depending on $\mathscr{C}_1, \mathscr{C}_2, \theta$ and $H$ such that 
\begin{equation*}
 C_1\exp\left(- \frac{C_2T}{\epsilon^{{\frac 3\theta}+\eta}}\right) \le  P \left(\sup_{0\le t\le T} \HO_t^{(\theta)}\left(u-h\right) \le \epsilon \right)\le C_3\exp\left(- \frac{C_4T}{\epsilon^{\frac3\theta}|\log \epsilon|^{\frac32}}\right), 
 \end{equation*} and
 \begin{equation*} C_1\exp\left(- \frac{C_2 T}{\epsilon^{\frac3\theta+\eta}}\right) \le P \left(\sup_{x\in \T} \HS_x^{(\theta)}\left(u-h\right) \le \epsilon \right)\le  C_3\exp\left(- \frac{C_4T}{\epsilon^{\frac3\theta}|\log \epsilon|^{\frac32}}\right).
\end{equation*}

 \item[(c)] Suppose that $\sigma(t,x,u)$ is again dependent on $u$ and satisfies both Assumptions \ref{assump1} and \ref{assump2}. Then there is a $\mathcal{D}_0>0$ such that for all $\mathcal{D}<\mathcal{D}_0$, there exists positive constants $C_1, C_2, C_3, C_4>0$ depending on $\mathscr{C}_1, \mathscr{C}_2, \theta$ and $H$ such that 
 
 \begin{equation*}
 C_1\exp\left(- \frac{C_2T}{\epsilon^{\frac 3\theta}}\right) \le  P \left(\sup_{0\le t\le T} \HO_t^{(\theta)}\left(u-h\right) \le \epsilon \right)\le C_3\exp\left(- \frac{C_4T}{\epsilon^{\frac3\theta}|\log \epsilon|^{\frac32}}\right), 
 \end{equation*} and 
 
 \begin{equation*}  C_1\exp\left(- \frac{C_2 T}{\epsilon^{\frac3\theta}}\right) \le P \left(\sup_{x\in \T} \HS_x^{(\theta)}\left(u-h\right) \le \epsilon \right)\le  C_3\exp\left(- \frac{C_4T}{\epsilon^{\frac3\theta}|\log \epsilon|^{\frac32}}\right).
\end{equation*}

\end{itemize}
\end{prop}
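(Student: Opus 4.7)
The plan is to reduce Proposition \ref{prop-support} to Theorem \ref{thm:drift} by a simple change of variables. I would set $v(t,x):=u(t,x)-h(t,x)$ and compute directly from \eqref{eq:she} that $v$ satisfies a stochastic heat equation of the form \eqref{eq:she-drift}, namely
\[
\partial_t v(t,x) = \tfrac12\,\partial_x^2 v(t,x) + \tilde g(t,x) + \tilde\sigma\bigl(t,x,v(t,x)\bigr)\,\dot W(t,x),
\]
with initial profile $v(0,\cdot)=u_0-h(0,\cdot)$, where $\tilde g(t,x):=\tfrac12\partial_x^2 h(t,x)-\partial_t h(t,x)$ and $\tilde\sigma(t,x,v):=\sigma(t,x,v+h(t,x))$. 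This is a routine calculation using the smoothness hypotheses on $h$.

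Next I would verify that $\tilde g$ and $\tilde\sigma$ satisfy the hypotheses of Theorem \ref{thm:drift}. The drift $\tilde g$ is bounded in absolute value by $\tfrac{3H}{2}$ and is independent of $v$, hence trivially globally Lipschitz in its third variable (with constant $0$). The coefficient $\tilde\sigma$ inherits the ellipticity bounds $\cl\le\tilde\sigma\le\cu$ from Assumption \ref{assump1} on $\sigma$; in the settings of (b) and (c), it also inherits the Lipschitz constant $\mathscr D$ from Assumption \ref{assump2}, since
\[
|\tilde\sigma(t,x,v)-\tilde\sigma(t,x,w)| = |\sigma(t,x,v+h(t,x))-\sigma(t,x,w+h(t,x))|\le\mathscr D|v-w|.
\]
Moreover, whenever $\sigma$ is independent of $u$, $\tilde\sigma$ is independent of $v$, which covers part (a).

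Because $\HO_t^{(\theta)}(u-h)=\HO_t^{(\theta)}(v)$ and $\HS_x^{(\theta)}(u-h)=\HS_x^{(\theta)}(v)$, the six probabilities in (a), (b), (c) equal the corresponding small-ball probabilities for $v$. The initial-condition hypothesis translates to $\HO^{(\theta)}(v(0,\cdot))\le\tfrac{\epsilon}{2}\bigl(1\wedge\tfrac{1}{2\Lambda}\bigr)$, which is simultaneously $\le\tfrac{\epsilon}{2}$ and $\le\tfrac{\epsilon}{2\Lambda}$, matching the respective requirements of Theorems \ref{thm1}, \ref{thm2}, \ref{thm3} for the statements on $\sup_{0\le t\le T}\HO_t^{(\theta)}$ and on $\sup_{x\in\T}\HS_x^{(\theta)}$. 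Applying Theorem \ref{thm:drift} to the equation for $v$ then yields all six bounds, with constants depending additionally on $\mathbb G:=\tfrac{3H}{2}$ but not on the (zero) Lipschitz constant of $\tilde g$.

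No substantive obstacle arises beyond this reduction: the real work---absorbing the bounded drift via a Girsanov-type change of measure---is already packaged inside Theorem \ref{thm:drift}, whose proof follows the argument in Section 2.2 of \cite{athr-jose-muel}. The only point requiring a moment's care is that the restriction $\mathcal{D}_0$ in part (c) must be compatible with the threshold $\du_0$ appearing in Theorem \ref{thm3}; since the Lipschitz constant of $\tilde\sigma$ equals that of $\sigma$, one may simply take $\mathcal{D}_0:=\du_0$.
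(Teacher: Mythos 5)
Your proposal is correct and follows essentially the same route as the paper: the paper likewise sets $w=u-h$, observes that $w$ solves \eqref{eq:she-drift} with drift $\frac12\partial_x^2 h-\partial_t h$ (bounded in terms of $H$) and coefficient $\tilde\sigma(t,x,w)=\sigma(t,x,w+h(t,x))$ inheriting Assumptions \ref{assump1}--\ref{assump2}, and then invokes Theorem \ref{thm:drift}. Your additional checks (the translation of the initial condition, equality of the semi-norms, and compatibility of $\mathcal{D}_0$ with $\du_0$ in part (c)) are exactly the routine verifications the paper leaves to the reader.
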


 We quickly discuss how the above follows from Theorem \ref{thm:drift} and the previous resuls. Consider $w(t,x)=u(t,x)-h(t,x)$. The reader can check that $w$ satisfies 
\[ \partial_t w = \frac12 \partial_x^2 w +\left[\frac12\partial_x^2 h-\partial_t h\right] +\tilde \sigma(t,x,w) \dot W, \]
where $w_0= u_0(x) -h(0,x)$, and $\tilde\sigma(t,x,w):= \sigma\left(t,x,w+h(t,x)\right)$. One also observes that $\tilde \sigma$ satisfies the same assumptions as that of $\sigma$, and the function $g(t,x):=\frac12\partial_x^2 h-\partial_t h$ satisfies the assumptions of Theorem \ref{thm:drift}. Proposition \ref{prop-support} follows from Theorem \ref{thm:drift} applied to $w$.

We next increase the collection of functions $h$ for which we can prove support theorems similar in spirit to Proposition \ref{prop-support}.

\begin{defn}\label{assumpf}
We say that  $f:[0, T]\times \T \rightarrow \R$ is in $C^{\gamma, \beta}$ if we have 
\begin{equation*}
 \|f\|_{C^{\gamma, \beta}} : = |f(0, 0)|+ \sup_{\substack{0\leq s,t \leq T \\ x,  y\in \T \\  (t, x) \neq (s, y) }} \,  \frac{|f(t, x)-f(s, y)|}{ |t-s|^\gamma + |x-y|^{\beta}}<\infty. 
\end{equation*}
%If $f\in C^{\gamma, \beta}$, we  define 
%\[ \|f\|_{C^{\gamma, \beta}} := |f(0, 0)|+ \sup_{\substack{0\leq s,t \leq T \\ 0\leq x,  y\leq 1\\  (t, x) \neq (s, y) }} \,  \frac{|f(t, x)-f(s, y)|}{ |t-s|^\gamma + |x-y|^{\beta}}.\]
%and 
%\begin{equation*}
%\sup_{x\in [0, 1]}\sup_{s\ne t\in [0,T]}\frac{|f(s, x)-f(t, x)|}{|s-t|^{\gamma}}<\infty,
%\end{equation*}
%for $\beta, \gamma\in (0,1].$
\end{defn}
In other words,  $C^{\gamma, \beta}$ is the set of functions that are uniformly bounded, H\"older continuous with the exponent $\gamma$ in time and the exponent $\beta$ in space. 

Let $\psi:\R \to \R$ be a non-negative, symmetric  and smooth function such that the support of $\psi$ is in $[-1, 1]$ and $\int_{\R} \psi(x) \, dx=1$. For any positive integer $n$ and $f\in C^{\gamma, \beta}$, we set   $\psi_n(x):=n \psi(nx)$ and define 
\begin{equation}\label{smooth}
f_n(t,x)=\iint_{\R^2} \tilde f(s,y)\psi_n(x-y)\psi_n(t-s) d y d s,
\end{equation}
where $\tilde f$ is the periodization of $f$ in the spatial variable $x$ and we also define  $\tilde f(s, x)=f(0, x)$ for $s< 0$ and $x\in \R$ and $\tilde f(s, x)=f(T, x)$ for $s>T$ and $x\in \R$. We have the following bounds on the derivatives of the above function. The proof is straightforward and is therefore omitted. 

\begin{lem}\label{deri-approx}
Suppose  $f\in C^{\gamma, \beta}$. Then there exists a constant $C>0$ such that for all $x\in \T$ and $t\in [0, T]$ 
\[ \left|\frac{\partial f_n(t,x)}{\partial x}\right|\leq Cn,\qquad  \left|\frac{\partial f_n(t,x)}{\partial t}\right|\leq Cn, \qquad \left|\frac{\partial^2 f_n(t,x)}{\partial x^2}\right|\leq Cn^2.
\]
\end{lem}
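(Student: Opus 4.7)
The three bounds follow from the same mollification trick, so I would handle them in parallel. The first observation is that since $\psi$ is smooth with compact support, differentiation under the integral sign in \eqref{smooth} is justified, and moves onto $\psi_n$. Specifically,
\begin{equation*}
\partial_x f_n(t,x) = \iint \tilde f(s,y)\,\psi_n'(x-y)\,\psi_n(t-s)\,dy\,ds,
\end{equation*}
and similarly for $\partial_t f_n$ (with $\psi_n'(t-s)$) and $\partial_x^2 f_n$ (with $\psi_n''(x-y)$). The crucial point is that $\int_\R \psi_n^{(k)}(u)\,du = 0$ for every $k\ge 1$, because $\psi_n$ is compactly supported. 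This lets me subtract off a constant (in the relevant variable) inside the integral without changing its value.

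For the spatial first derivative, I would subtract $\tilde f(s,x)$ to obtain
\begin{equation*}
\partial_x f_n(t,x) = \iint \bigl[\tilde f(s,y)-\tilde f(s,x)\bigr]\,\psi_n'(x-y)\,\psi_n(t-s)\,dy\,ds,
\end{equation*}
and then use $|\tilde f(s,y)-\tilde f(s,x)|\le \|f\|_{C^{\gamma,\beta}}|x-y|^\beta$ together with the scaling $\|\psi_n'\|_\infty \lesssim n^2$ and $\mathrm{supp}(\psi_n')\subset [-1/n,1/n]$. Since $\int \psi_n(t-s)\,ds = 1$, the $s$-integral is harmless, and the $y$-integral over $|x-y|\le 1/n$ contributes at most $C n^{-\beta-1}$. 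This gives $|\partial_x f_n|\le C n^{1-\beta} \le C n$. The same idea works for $\partial_t f_n$ (subtract $\tilde f(t,y)$ and apply $\gamma$-H\"older continuity in time) and for $\partial_x^2 f_n$ (subtract $\tilde f(s,x)$ again, using $\|\psi_n''\|_\infty\lesssim n^3$), yielding $n^{1-\gamma}\le n$ and $n^{2-\beta}\le n^2$ respectively.

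Before carrying out these estimates I would briefly verify that the periodization $\tilde f$ inherits the H\"older moduli of $f$. In space, periodization on the torus is isometric for H\"older norms (since $\T = \R/\Z$ and $f$ was already defined on $\T$). In time, the constant extension to $(-\infty,0)\cup(T,\infty)$ is easily seen to preserve the $\gamma$-H\"older seminorm: if $s_1<0<s_2$ then $|\tilde f(s_1,x)-\tilde f(s_2,x)|=|f(0,x)-f(s_2,x)|\le \|f\|_{C^{\gamma,\beta}}\,s_2^\gamma\le \|f\|_{C^{\gamma,\beta}}|s_2-s_1|^\gamma$, and the analogous check handles $s_1,s_2>T$.

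There is no real obstacle here; the argument is entirely routine and the only care needed is to remember to use $\int\psi_n^{(k)}=0$ before invoking H\"older continuity, as otherwise one would only get the trivial (and insufficient) bounds $n^k\|f\|_\infty$. The constant $C$ in the conclusion depends on $\|f\|_{C^{\gamma,\beta}}$ and on $\|\psi'\|_\infty,\|\psi''\|_\infty$.
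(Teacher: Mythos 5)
Your proof is correct: differentiating under the integral, exploiting $\int_{\R}\psi_n^{(k)}=0$ to subtract $\tilde f(s,x)$ (resp.\ $\tilde f(t,y)$), and then invoking the H\"older moduli of the periodized/extended function gives $|\partial_x f_n|\le Cn^{1-\beta}$, $|\partial_t f_n|\le Cn^{1-\gamma}$, $|\partial_x^2 f_n|\le Cn^{2-\beta}$, which are even sharper than the stated bounds; your checks that $\tilde f$ inherits the H\"older seminorms are the right points of care. The paper itself omits the proof as ``straightforward,'' so there is no argument to compare against, but it is worth noting that your closing remark is slightly off: the cancellation trick is not actually needed for the bounds as stated. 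Since $f\in C^{\gamma,\beta}$ is bounded on $[0,T]\times\T$ and $\|\psi_n'\|_{L^1(\R)}=n\|\psi'\|_{L^1(\R)}$, $\|\psi_n''\|_{L^1(\R)}=n^2\|\psi''\|_{L^1(\R)}$, the trivial estimate $|\partial_x f_n(t,x)|\le\|\tilde f\|_\infty\,\|\psi_n'\|_{L^1(\R)}$ (and its analogues) already yields $Cn$, $Cn$, $Cn^2$ --- i.e.\ the ``insufficient'' bound of order $n^k\|f\|_\infty$ that you dismiss is exactly the order the lemma requires, which is presumably why the authors felt free to omit the proof. Your route buys the extra factors $n^{-\beta}$, $n^{-\gamma}$ (useful elsewhere, e.g.\ in quantifying $f_n\to f$), at the cost of needing the H\"older preservation under periodization and time extension; the cruder route needs only boundedness.
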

The following lemma shows we can approximate $f\in C^{\gamma, \beta}$ by smooth mollifications of $f$.  
\begin{lem}\label{lem:approx}
Let $f:[0,T]\times\T\rightarrow \R$ be in $C^{\gamma, \beta}$ for some $\gamma, \beta \in (0,  1]$. Consider the sequence of smooth functions $\{f_n\}_{n=1}^{\infty}$ defined by \eqref{smooth}. Let $\beta_1 \in (0, \beta)$ and $\gamma_1 \in (0, \gamma)$. Then, for any fixed $\epsilon>0$, there exist constants $C_1(\epsilon)$ and $C_2(\epsilon)$  such that 
 we have 
\begin{equation}\label{eq:f-fn-time}
\sup_{0\leq t\leq T}\HO^{\left(\frac 12-\beta_1\right)}_t (f_n-f)\leq  \epsilon\quad \text{as}\quad n\geq C_1(\epsilon),
\end{equation}
and 
\begin{equation}\label{eq:f-fn-space}
\sup_{n\in \T}\HS^{\left(\frac 12-2\gamma_1 \right)}_x (f_n-f)\leq  \epsilon\quad \text{as}\quad n\geq C_2(\epsilon).
\end{equation} 
\end{lem}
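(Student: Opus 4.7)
The plan is to prove both \eqref{eq:f-fn-time} and \eqref{eq:f-fn-space} by a standard mollification-plus-interpolation argument. Notice that $\HO_t^{(1/2 - \beta_1)}(g)$ is the $\beta_1$-H\"older seminorm of $g(t, \cdot)$, while $\HS_x^{(1/2 - 2\gamma_1)}(g)$ is the $\gamma_1$-H\"older seminorm of $g(\cdot, x)$. Since $\beta_1 < \beta$ and $\gamma_1 < \gamma$, the strategy is to show that (i) $\|f_n - f\|_\infty \to 0$ as $n \to \infty$, and (ii) the $\beta$-H\"older norm in $x$ and the $\gamma$-H\"older norm in $t$ of $f_n - f$ are bounded uniformly in $n$, then interpolate between the two.

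First I would establish (i). Since $\psi_n$ is supported in $[-1/n, 1/n]$ and integrates to $1$,
\begin{equation*}
|f_n(t,x) - f(t,x)| \le \iint \psi_n(y)\psi_n(s)\, \bigl|\tilde f(t - s, x - y) - f(t, x)\bigr| \, dy\, ds.
\end{equation*}
The extension $\tilde f$ remains $(\gamma, \beta)$-H\"older on $\R \times \R$ with the same constant (it is constant in time outside $[0, T]$ and matches $f(0, \cdot)$, $f(T, \cdot)$ at the boundary), so the integrand is bounded by $\|f\|_{C^{\gamma, \beta}}(|s|^\gamma + |y|^\beta)$, yielding $\|f_n - f\|_\infty \le \|f\|_{C^{\gamma, \beta}}(n^{-\gamma} + n^{-\beta})$. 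For (ii), writing the difference quotient through the convolution gives
\begin{equation*}
\sup_t \sup_{x \ne y} \frac{|f_n(t, x) - f_n(t, y)|}{|x - y|^\beta} \le \iint \psi_n(z)\psi_n(s) \, \frac{|\tilde f(t-s, x-z) - \tilde f(t-s, y-z)|}{|x-y|^\beta}\, dz\, ds \le \|f\|_{C^{\gamma, \beta}},
\end{equation*}
with the analogous bound in the temporal variable. Hence $f_n - f$ has spatial $\beta$-H\"older norm and temporal $\gamma$-H\"older norm at most $2\|f\|_{C^{\gamma, \beta}}$ uniformly in $n$.

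The interpolation step is elementary: for any function $g$ with $\|g\|_\infty \le \eta$ and spatial $\beta$-H\"older constant $\le M$, and any $\beta_1 \in (0, \beta)$,
\begin{equation*}
\frac{|g(x) - g(y)|}{|x - y|^{\beta_1}} = \left(\frac{|g(x) - g(y)|}{|x - y|^\beta}\right)^{\beta_1/\beta} |g(x) - g(y)|^{1 - \beta_1/\beta} \le M^{\beta_1/\beta} (2\eta)^{1 - \beta_1/\beta}.
\end{equation*}
Applying this with $g = (f_n - f)(t, \cdot)$, $M = 2\|f\|_{C^{\gamma, \beta}}$ and $\eta = \|f\|_{C^{\gamma, \beta}}(n^{-\gamma} + n^{-\beta})$, and taking $n$ large enough that the right-hand side is at most $\epsilon$, proves \eqref{eq:f-fn-time}. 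The identical argument in the temporal variable (using $\gamma_1 < \gamma$ and the temporal H\"older bound on $f_n - f$) gives \eqref{eq:f-fn-space}. I do not anticipate any substantive obstacle here; the only piece of care needed is verifying that the extension $\tilde f$ preserves the original $(\gamma, \beta)$-H\"older regularity, which is immediate from its piecewise-constant extension in time.
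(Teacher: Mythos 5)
Your proposal is correct and follows essentially the same route as the paper: mollification preserves the $C^{\gamma,\beta}$ bounds, the sup-norm estimate $\|f_n-f\|_\infty\lesssim n^{-\gamma}+n^{-\beta}$ is exactly the paper's key estimate, and the passage to the lower exponents $\beta_1,\gamma_1$ is the same content. The only cosmetic difference is that you conclude via the interpolation inequality $[g]_{\beta_1}\le [g]_\beta^{\beta_1/\beta}(2\|g\|_\infty)^{1-\beta_1/\beta}$, whereas the paper splits into the cases $|x-y|$ small (handled by the uniform $\beta$-H\"older bounds on $f$ and $f_n$) and $|x-y|$ bounded below (handled by the sup-norm bound); these are equivalent ways of combining the same two estimates.
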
 

\begin{proof}
We start by making the following observation. Since for each $t\geq 0$, $f(t, \cdot)$ is H\"older($\beta$) continuous, for $\beta_1<\beta$ we have
\begin{align*}
\frac{|f(t,x)-f(t,y)|}{|x-y|^{\beta_1}}&\leq C_1|x-y|^{\beta-\beta_1},
\end{align*}
where $C_1$ is a positive constant that is independent of $t$. We also have 
\begin{align*}
\frac{|f_n(t,x)-f_n(t,y)|}{|x-y|^{\beta_1}}&=\frac{\left|\iint [f(s,x-z)-f(s,y-z)]\psi_n(z)\psi_n(t-s)dz ds\right|}{|x-y|^{\beta_1}}\\
&\leq C_1|x-y|^{\beta-\beta_1}.
\end{align*}
We therefore obtain  
\begin{align*}
\sup_{0\leq t\leq T}  \left| \frac{(f_n(t, x)-f(t, x))-(f_n(t, y)-f(t,y))}{|x-y|^{\beta_1}}\right| \leq \epsilon
\end{align*}
whenever we choose $|x-y|\leq\left( \frac{\epsilon}{2C_1}\right)^{\frac{1}{\beta-\beta_1}}$. We now consider $f(t, x) - f_n(t, x)$: 

\begin{align*}
f(t,x)-f_n(t,x)&=\iint [f(t, x)-f(s, y)]\psi_n(t-s)\psi_n(x-y) \, d y d s\\
& = \iint \left[ f(t, x) - f(t-s, x-y) \right] \psi_n(s) \psi_n(y) \, d y ds\\
&=\iint \left[ \frac{f(t, x) - f(t-s, x)}{|s|^\gamma}  \right] |s|^\gamma \psi_n(s) \psi_n(y) \, d y ds\\
&\qquad  + \iint \left[ \frac{f(t-s, x) - f(t-s, x-y)}{|y|^\beta}  \right] |y|^\beta \psi_n(s) \psi_n(y) \, d y ds. 
\end{align*}
Since $f\in C^{\gamma, \beta}$,  $\psi_n(x)=0$ if $|x| > 1/n$ and $\int \psi_n(x) \, dx =1$, there exists some constant $C_2>0$ such that 
\begin{equation}\label{eq:f-fn-sup}
 \sup_{0\leq t \leq T}\sup_{x\in \T} |f(t, x)-f_n(t, x)|\leq C_2 \left( n^{-\gamma} + n^{-\beta}\right).
 \end{equation}
Hence, for all $t\in [0, T]$ and $x,y\in \T$ satisfying $|x-y|\geq \left( \frac{\epsilon}{2C_1}\right)^{\frac{1}{\beta-\beta_1}}$,  there exists some constant $C_3>0$ which only depends on $\beta, \beta_1$ such that  
\begin{align*}
\left| \frac{(f_n(t, x)-f(t, x))-(f_n(t, y)-f(t,y))}{|x-y|^{\beta_1}}\right| &\leq 2\left( \frac{2C_1}{\epsilon}\right)^{\beta_1/(\beta-\beta_1)} \sup_{x\in \T} |f(t, x)-f_n(t, x)|\\
&\leq C_3 \left( n^{-\gamma} + n^{-\beta}\right)\epsilon^{\beta_1/(\beta_1-\beta)}. 
\end{align*} 
We have therefore proved \eqref{eq:f-fn-time} for all large enough $n \geq C_1(\epsilon)$ where 
\begin{equation}\label{eq:C1}
C_1(\epsilon):=\max \left\{  \left(2C_3 \epsilon^{\beta/(\beta_1-\beta)}\right)^{1/\gamma}, \left(2C_3\epsilon^{\beta/(\beta_1-\beta)}\right)^{1/\beta}   \right\}.
\end{equation} 
For \eqref{eq:f-fn-space}, we follow the same proof above but switch $\beta$ by $\gamma$ to get \eqref{eq:f-fn-space}. Here, we need  $n\geq C_2(\epsilon)$ where 
\begin{equation}\label{eq:C2}
C_2(\epsilon):=\max \left\{  \left(2C_3 \epsilon^{\gamma/(\gamma_1-\gamma)}\right)^{1/\gamma}, \left(2C_3\epsilon^{\gamma/(\gamma_1-\gamma)}\right)^{1/\beta}   \right\}.
\end{equation}
This completes the proof of the lemma.
\end{proof}

\begin{rem} 
It is easy to see from  \eqref{eq:f-fn-time}, \eqref{eq:f-fn-space} and \eqref{eq:f-fn-sup}  that  every $f\in C^{\gamma, \beta}$ can be approximated  by its smooth mollification $f_n$ in $\|\cdot\|_{C^{\gamma_1, \beta_1}}$ for all $\gamma_1<\gamma$ and $\beta_1<\beta$. That is, for any $\epsilon>0$ and for  every $f\in C^{\gamma, \beta}$, there exists a constant $C(\epsilon)>0$ such that  
\begin{equation}
 \|f_n - f\|_{C^{\gamma_1, \beta_1}}  \leq \epsilon \qquad \text{for $n\geq C(\epsilon)$}. 
 \end{equation}  
\end{rem}

We can now drop the assumption that $h$ is smooth in Proposition \ref{prop-support}. We obtain bounds on the small ball probabilities when $h \in C^{\gamma, \beta}$. The upper bounds remain the same as before, but the lower bounds now depend on $\beta$ and $\gamma$. We now have to treat the spatial and temporal regularities of $u-h$ differently. We first consider the spatial difference of $u-h$, i.e., $\HO_t^{(\theta)}(u-h)$.

\begin{thm}\label{thm-support-space}
Consider the solution to \eqref{eq:she}. Let   $0<\theta<\frac12$ and  $0<\epsilon<1$. Suppose  $h:[0, T]\times \T\rightarrow \R$ is in  $C^{\gamma,\beta}$ with $\frac{1}{2}-\theta<\beta\leq 1$ and $\gamma \in(0,1]$. We also assume  $\HO_0^{(\theta)}(u-h) \le \frac{\epsilon}{4}$. Then we have the following: 
\begin{itemize}
\item[(a)] Suppose that the function $\sigma(t,x,u)$ is independent of $u$ but satisfies Assumption \ref{assump1}.  Then there exist positive constants $C_1, C_2, C_3, C_4>0$ depending on $\mathscr{C}_1, \mathscr{C}_2, \theta,  \beta$ and $\gamma$ such that 
\begin{equation*}  C_1\exp\left(- C_2 T\left[ \frac{1}{\epsilon^{\frac3\theta}}+ \frac{1}{\epsilon^{\frac{4 (\beta \vee \gamma)}{\gamma(\beta +\theta -1/2)}}}  \right] \right) \le P \left(\sup_{0\le t\le T} \HO_t^{(\theta)}\left(u-h\right) \le \epsilon \right) \le C_3\exp\left(- \frac{C_4T}{\epsilon^{\frac 3\theta}}\right).
\end{equation*}
%$C_3(\epsilon)$ depends on $\epsilon, \beta, \gamma$ as well as $\mathcal{C}_1, \mathcal{C}_2, \theta$ and $H$.
\item[(b)] Suppose that $\sigma(t,x,u)$ is now dependent on $u$ and satisfies both Assumptions \ref{assump1} and \ref{assump2}. Then for any $\eta>0$, there exist positive constants $C_1, C_2, C_3, C_4>0$ depending on $\mathscr{C}_1, \mathscr{C}_2, \theta, \beta$ and $\gamma$ such that 
\begin{equation*}
 C_1\exp\left(- C_2 T\left[\frac{1}{\epsilon^{\frac3\theta+\eta}}+\frac{1}{\epsilon^{\frac{4 (\beta \vee \gamma)}{\gamma(\beta +\theta -1/2)}}}   \right]\right) \le  P \left(\sup_{0\le t\le T} \HO_t^{(\theta)}\left(u-h\right) \le \epsilon \right)\le C_3\exp\left(- \frac{C_4T}{\epsilon^{\frac3\theta}|\log \epsilon|^{\frac32}}\right).
 \end{equation*}
% $C(\epsilon)$ depends on $\epsilon, \beta, \gamma$ as well as $\mathcal{C}_1, \mathcal{C}_2, \theta$ and $H$.
 \item[(c)] Suppose that $\sigma(t,x,u)$ is again dependent on $u$ and satisfies both Assumptions \ref{assump1} and \ref{assump2}. Then there is a $\mathcal{D}_0>0$ such that for all $\mathcal{D}<\mathcal{D}_0$, there exists positive constants $C_1, C_2, C_3, C_4>0$ depending on $\mathscr{C}_1, \mathscr{C}_2, \theta, \beta$ and $\gamma$ such that 
 \begin{equation*}
C_1\exp\left(- C_2 T\left[\frac{1}{\epsilon^{\frac3\theta}}+\frac{1}{\epsilon^{\frac{4 (\beta \vee \gamma)}{\gamma(\beta +\theta -1/2)}}}   \right]\right) \le  P \left(\sup_{0\le t\le T} \HO_t^{(\theta)}\left(u-h\right) \le \epsilon \right)\le C_3\exp\left(- \frac{C_4T}{\epsilon^{\frac3\theta}|\log \epsilon|^{\frac32}}\right).\end{equation*}
%provided that we assume that $\beta<\gamma$ and that $\theta>3(\beta-\frac{1}{2}).$
\end{itemize}
\end{thm}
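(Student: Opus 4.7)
The plan is to approximate $h \in C^{\gamma,\beta}$ by its smooth mollifications $h_n$ introduced in \eqref{smooth}, apply Proposition \ref{prop-support} to $u - h_n$, and then optimize the approximation level $n$. Since $\beta + \theta > \tfrac12$ by hypothesis, we may take $\beta_1 = \tfrac12 - \theta < \beta$ in \eqref{eq:f-fn-time} of Lemma \ref{lem:approx}, so from the explicit form of $C_1(\cdot)$ in \eqref{eq:C1} we may choose
\[
n_\epsilon \;\sim\; \epsilon^{-\beta/[\min(\beta,\gamma)(\beta+\theta-1/2)]}
\]
so that $\sup_{0\le t\le T}\HO_t^{(\theta)}(h_{n_\epsilon} - h) \le \epsilon/8$. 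Lemma \ref{deri-approx} then guarantees that $h_{n_\epsilon}$ and the associated smooth drift $g_\epsilon := \tfrac12 \partial_x^2 h_{n_\epsilon} - \partial_t h_{n_\epsilon}$ are uniformly bounded by some $H_\epsilon = O(n_\epsilon^2)$.

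For the lower bound, the triangle inequality gives
\[
\left\{\sup_{0\le t\le T}\HO_t^{(\theta)}(u - h_{n_\epsilon}) \le \tfrac{7\epsilon}{8}\right\} \;\subseteq\; \left\{\sup_{0\le t\le T}\HO_t^{(\theta)}(u - h) \le \epsilon\right\},
\]
and likewise $\HO_0^{(\theta)}(u - h_{n_\epsilon}) \le \epsilon/4 + \epsilon/8 = 3\epsilon/8 < (7\epsilon/8)/2$, so the spatial initial-profile hypothesis needed to apply Proposition \ref{prop-support} to $u - h_{n_\epsilon}$ at parameter $7\epsilon/8$ is satisfied. Proposition \ref{prop-support}, which is proved via the Girsanov reduction of \eqref{eq:she-drift} to the driftless equation implicit in Theorem \ref{thm:drift}, then yields a lower bound of the form $\exp(-C T/\epsilon^{3/\theta})$, but with a constant depending on $H_\epsilon$. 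Tracking that dependence explicitly: in the Girsanov step the Radon--Nikodym second moment satisfies $E_P[(dQ/dP)^2] \le \exp(T H_\epsilon^2/\cl^2)$, so after Cauchy--Schwarz one picks up an additive $C T H_\epsilon^2$ contribution in the exponent, on top of the usual $C T/\epsilon^{3/\theta}$ coming from the driftless small ball estimates of Theorems \ref{thm1}--\ref{thm3}. Using the elementary identity $\beta/\min(\beta,\gamma) = (\beta\vee\gamma)/\gamma$ converts $H_\epsilon^2 \sim \epsilon^{-4\beta/[\min(\beta,\gamma)(\beta+\theta-1/2)]}$ into the exponent $4(\beta\vee\gamma)/[\gamma(\beta+\theta-1/2)]$ stated in the theorem, giving the advertised lower bound.

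For the upper bound, the reverse triangle inequality yields
\[
\left\{\sup_{0\le t\le T}\HO_t^{(\theta)}(u - h) \le \epsilon\right\} \;\subseteq\; \left\{\sup_{0\le t\le T}\HO_t^{(\theta)}(u - h_{n_\epsilon}) \le \tfrac{9\epsilon}{8}\right\},
\]
and the upper bound of Proposition \ref{prop-support} applied to $u - h_{n_\epsilon}$ gives the claim, provided the constant in the exponent of that upper bound does not deteriorate with $H_\epsilon$. This is expected because the upper bound proofs of the present paper (see, e.g., Lemma \ref{lem:a1} in Section \ref{sec:ubd}) are driven by conditional lower bounds on the variance of noise increments, and the addition of a bounded deterministic drift only shifts conditional means without affecting such bounds. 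The main obstacle of the proof is therefore the careful bookkeeping in the lower bound: extracting from the proof of Theorem \ref{thm:drift} a quantitative statement of the form ``the constant in the exponent of Proposition \ref{prop-support} (a) may be taken as $C + C' H^2$'' rather than the qualitative claim that it depends on $H$, and then checking that the optimal choice of $n_\epsilon$ combines the two contributions $\epsilon^{-3/\theta}$ and $H_\epsilon^2$ into exactly the advertised exponent, uniformly in $\epsilon$.
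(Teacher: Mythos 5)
Your proposal is correct and follows essentially the same route as the paper: mollify $h$ via Lemma \ref{lem:approx} with $\beta_1=\frac12-\theta$, bound the resulting drift by Lemma \ref{deri-approx}, track the quadratic drift-dependence through the Girsanov change of measure to obtain the extra $n^4\sim \epsilon^{-4(\beta\vee\gamma)/[\gamma(\beta+\theta-1/2)]}$ term in the lower bound, and observe that the upper bound survives because a deterministic drift leaves the conditional variance estimates (as in Lemma \ref{lem:a1} and the martingale coupling of \eqref{eq:diff-u-x}) unchanged. This matches the paper's proof of Theorem \ref{thm-support-space}, which performs exactly this bookkeeping by a close inspection of the drift argument from \cite{athr-jose-muel}.
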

\begin{proof} The proof is similar to the proof of Theorem 1.2 of \cite{athr-jose-muel}, but here we use the approximation procedure presented above.  That is,  we set  $\beta_1:=\frac{1}{2}-\theta$ in Lemma \ref{lem:approx} and  $n=  C_1(\epsilon)$ where $C_1(\epsilon)$ is given in \eqref{eq:C1}, and  define a smooth function $h_n$ by \eqref{smooth}. Then,  we have 
\begin{equation} \label{h-hn}
\sup_{0\leq t\leq T}\HO_t^{(\theta)}\left(h_n-h\right)\leq \frac \epsilon 2.
\end{equation}
%and therefore
%\begin{equation*}
%P\left(\sup_{0\leq t\leq T}\HO_t^{(\theta)}\left(h_n-h\right)\geq \frac \epsilon 2\right)=0.
%\end{equation*}
Since $\HO_t^{(\theta)}(u-h) \leq \HO_t^{(\theta)}(u-h_n) + \HO_t^{(\theta)}(h-h_n)$, \eqref{h-hn} implies that  
%\begin{equation*}
%\HO_t^{(\theta)}\left(u-h\right)=\HO_t^{(\theta)}\left(u-h_n+h_n-h\right)
%\end{equation*}
%which gives us
\begin{align*}
P\left(\sup_{0\leq t\leq T}\HO_t^{(\theta)}\left(u-h_n\right)\leq \frac{\epsilon}{2}\right) \leq P\left(\sup_{0\leq t\leq T}\HO_t^{(\theta)}\left(u-h\right)\leq \epsilon\right).
%&-P\left(\sup_{0\leq t\leq T}\HO_t^{(\theta)}\left(h_n-h\right)\geq \frac \epsilon 2\right).
\end{align*}
By Lemma \ref{deri-approx},  there exists a constant $C$  so that 
\begin{equation}\label{eq:bound_h}
\sup_{t\in[0, T]}\sup_{x\in \T}\left|\left( \partial_t - \partial_x^2 \right)   h_n(t, x)\right|\leq Cn^2.
\end{equation}
Here, $\left(\partial_t - \frac{1}{2}\partial_x^2 \right)   h_n(t, x)$ is the drift term when we consider the differential form of  $u(t, x) - h_n(t, x)$. That is, if we let $\tilde u_n:=u-h_n$, then $\tilde u$ satisfies 
\begin{align*}
\partial_t \tilde u_n& = \partial_t u -\partial_t h_n \\
&=\frac{1}{2}\partial_x^2 u  + \sigma(t, x, u) \dot W - \partial_t h_n\\
&=\frac{1}{2}\partial_x^2 \tilde u_n - \left( \partial_t h_n -\frac{1}{2}\partial_x^2 h_n \right)+ \tilde\sigma_n(t, x, \tilde u_n) \dot W, 
\end{align*} 
where $\tilde \sigma_n(t, x, z):=\sigma(t, x, z + h_n(t, x))$.  A close inspection of the proof of Theorem 1.2 of \cite{athr-jose-muel} shows that in the case of (a)
\begin{equation*}  C_1\exp\left(- C_2 T\left[\frac{1}{\epsilon^{\frac3\theta}}+n^4\right]\right) \le P \left(\sup_{0\le t\le T} \HO_t^{(\theta)}\left(u-h\right) \le \epsilon \right).
\end{equation*} 
Here, $n^4$ comes from \eqref{eq:bound_h}. 
Recalling the choice of $n$ finishes the proof of the lower bound in  part (a). The arguments for the lower bounds in (b) and (c) are similar.  % For part (c), we observe that under the additional assumptions, we have $\exp\left( \frac{1}{\epsilon  ^{\frac{3}{\theta}}}\right)\geq \exp\left( \frac{1}{\epsilon^{\frac{2}{\beta-1/2+\theta}}}\right).$ The proof now follows from the same ideas as the proof of Theorem \ref{thm3} and the above. 

Let us  now consider the upper bounds. First, we prove the upper bound in part (a). Here we also use the approximation procedure.  That is, we choose and fix $n$ large enough to get \eqref{h-hn}. Then, by triangle inequality, we have 
\begin{align*}
P\left(\sup_{0\leq t\leq T}\HO_t^{(\theta)}\left(u-h\right)\leq \frac{\epsilon}{2}\right)\leq P\left(\sup_{0\leq t\leq T}\HO_t^{(\theta)}\left(u-h_n\right)\leq \epsilon \right).
%\\&-P\left(\sup_{0\leq t\leq T}\HO_t^{(\theta)}\left(h_n-h\right)\geq \frac \epsilon 2\right).
\end{align*}
 Let $v_n(t, x):=u(t, x) - h_n(t, x)$. Then, $v$ satisfies 
\begin{equation}\label{v-drift}
\begin{split} \partial_t v_n (t,x)&= \frac12\,\partial_x^2 v_n(t,x)- g_n(t,x)+  \sigma \big(t,x \big) \cdot \dot{W}(t,x),\quad t
\in \R_+,\, x\in \T, 
%u(0,\cdot) &\equiv 0,
\end{split}
\end{equation}
where $g_n(t, x) = \left(\partial_t -  \frac12\partial_x^2 \right) h_n (t, x)$.  %Note that  $g$ is bounded and deterministic, and $\tilde \sigma$ satisfies  Assumptions \ref{assump1} and \ref{assump2}. Thus,\
To get the upper bound in part (a),  we just follow the proof of the upper bound of  Theorem \ref{thm1} (a). Note that the upper bound of Theorem \ref{thm1} (a) is obtained once \eqref{eq:cvar:lbd} is proved.  The only difference from \eqref{v-drift} to \eqref{eq:she} is that we have the additional drift term $g_n(t, x)$ in \eqref{v-drift}. However, the drift term does not have any effect in obtaining the upper bound. More precisely,  similar to   \eqref{eq:cvar:lbd},  we need to show that there exists a constant $C>0$ such that 
\begin{equation} \label{eq:var-bar}
  \Var \left(\bar \Delta_j\, \Big \vert \, \mathcal G_{j-1} \right) \geq C\epsilon^{1/\theta},
\end{equation}
where 
\begin{align*}
& \bar\Delta_j:=\tilde \Delta_j + \Big[\left(G_{t_1}*u_0\right)(x+\delta) -\left(G_{t_1}*u_0\right)(x)\Big] \\
&\qquad \qquad +\left[ \int_0^{t_1} \int_{\T} \left( G_{t_1 -s }(y-x_j-\delta) - G_{t_1 -s }(y-x_j-\delta) \right) g_n(s, y) \, dy \, ds   \right], 
 \end{align*} and $\tilde \Delta_j$ is given in \eqref{noise_diff}. Here, since $g_n$ is deterministic, we have 
 \[ \Var \left( \bar\Delta_j\, \Big \vert \, \mathcal G_{j-1} \right) = \Var \left(\tilde  \Delta_j\, \Big \vert \, \mathcal G_{j-1} \right).\] 
  Thus, \eqref{eq:cvar:lbd} implies \eqref{eq:var-bar}, which leads to the upper bound in part (a), that is the same upper bound in Theorem \ref{thm1}.  
 
For the upper bound in part (b), we  follow  the proof of the upper bound in Theorem \ref{thm2} (a). That is,  we add $-h(t, x)$ to the mild form of $u(t, x)$ and also add $-h(t, x)$  to  $V^{(\beta)}$ and $V^{(\beta, l)}$ in \eqref{truncate} and \eqref{iterates}, then  follow the proof of the upper bound in  Theorem \ref{thm2} (a).    It is easy to see that Propositions \ref{prop:V-Vl} and \ref{prop:u-V}, and Lemma \ref{lemma:ind} still hold. In addition, as in  \eqref{eq:diff-u-x},   we can regard $\left( u(t_1, x_{2j+1}) - u(t_1, x_{2j}) \right) - \left(h(t_1, x_{2j+1}) - h(t_1, x_{2j})   \right) $ as 
\[ M_0- \left(h(t_1, x_{2j+1}) - h(t_1, x_{2j}) \right) + B_{\langle M \rangle_{t_1}}.\]
Following the same proof of \eqref{eq:diff-u-x}, we obtain
\begin{equation}\label{eq:u-h} 
P\left( \left|\left( u(t_1, x_{2j+1}) - u(t_1, x_{2j}) \right) - \left(h(t_1, x_{2j+1}) - h(t_1, x_{2j})   \right)  \right|  \leq 5\epsilon^{1/2\theta} \right) \leq \gamma, 
\end{equation} where $\gamma$ is given in \eqref{eq:diff-u-x}.

 The proof of the upper bound in part (c) is exactly the same as the one for the upper bound in part (b). 
\end{proof}

 Similar to Theorem \ref{thm-support-space}, we now provide small ball probabilities of the temporal H\"older semi-norms of $u-h$.  We skip the proof since one can basically follow the proof of Theorem  \ref{thm-support-space}.  
  
\begin{thm}\label{thm-support-time}
Consider the solution to \eqref{eq:she}. Let   $0<\theta<\frac12$ and  $0<\epsilon<1$. Suppose  $h:[0, T]\times \T\rightarrow \R$ is in  $C^{\gamma,\beta}$ with $\frac{1}{4}-\frac{\theta}{2}<\gamma \leq 1$ and $\beta \in(0,1]$. We also assume  $\HS_0^{(\theta)}(u-h) \le \frac{\epsilon}{4\Lambda}$. Then we have the following: 
\begin{itemize}
\item[(a)] Suppose that the function $\sigma(t,x,u)$ is independent of $u$ but satisfies Assumption \ref{assump1}.  Then there exist positive constants $C_1, C_2, C_3, C_4>0$ depending on $\mathscr{C}_1, \mathscr{C}_2, \theta,  \beta$ and $\gamma$ such that 
\begin{equation*}  C_1\exp\left(- C_2 T\left[ \frac{1}{\epsilon^{\frac3\theta}}+ \frac{1}{\epsilon^{\frac{4 (\beta \vee \gamma)}{\beta\left(\gamma +\frac{\theta}{2} -\frac{1}{4}\right)}}}  \right] \right) \le P \left(\sup_{ x \in \T} \HS_x^{(\theta)}\left(u-h\right) \le \epsilon \right) \le C_3\exp\left(- \frac{C_4T}{\epsilon^{\frac 3\theta}}\right).
\end{equation*}
%$C_3(\epsilon)$ depends on $\epsilon, \beta, \gamma$ as well as $\mathcal{C}_1, \mathcal{C}_2, \theta$ and $H$.
\item[(b)] Suppose that $\sigma(t,x,u)$ is now dependent on $u$ and satisfies both Assumptions \ref{assump1} and \ref{assump2}. Then for any $\eta>0$, there exist positive constants $C_1, C_2, C_3, C_4>0$ depending on $\mathscr{C}_1, \mathscr{C}_2, \theta, \beta$ and $\gamma$ such that 
\begin{equation*}
 C_1\exp\left(- C_2 T\left[\frac{1}{\epsilon^{\frac3\theta+\eta}}+\frac{1}{\epsilon^{\frac{4 (\beta \vee \gamma)}{\beta\left(\gamma +\frac{\theta}{2} -\frac{1}{4}\right)}}}   \right]\right) \le  P \left(\sup_{x\in \T} \HS_x^{(\theta)}\left(u-h\right) \le \epsilon \right)\le C_3\exp\left(- \frac{C_4T}{\epsilon^{\frac3\theta}|\log \epsilon|^{\frac32}}\right).
 \end{equation*}
% $C(\epsilon)$ depends on $\epsilon, \beta, \gamma$ as well as $\mathcal{C}_1, \mathcal{C}_2, \theta$ and $H$.
 \item[(c)] Suppose that $\sigma(t,x,u)$ is again dependent on $u$ and satisfies both Assumptions \ref{assump1} and \ref{assump2}. Then there is a $\mathcal{D}_0>0$ such that for all $\mathcal{D}<\mathcal{D}_0$, there exists positive constants $C_1, C_2, C_3, C_4>0$ depending on $\mathscr{C}_1, \mathscr{C}_2, \theta, \beta$ and $\gamma$ such that 
 \begin{equation*}
C_1\exp\left(- C_2 T\left[\frac{1}{\epsilon^{\frac3\theta}}+\frac{1}{\epsilon^{\frac{4 (\beta \vee \gamma)}{\beta\left(\gamma +\frac{\theta}{2} -\frac{1}{4}\right)}}}   \right]\right) \le  P \left(\sup_{x\in \T} \HS_x^{(\theta)}\left(u-h\right) \le \epsilon \right)\le C_3\exp\left(- \frac{C_4T}{\epsilon^{\frac3\theta}|\log \epsilon|^{\frac32}}\right).\end{equation*}
%provided that we assume that $\beta<\gamma$ and that $\theta>3(\beta-\frac{1}{2}).$
\end{itemize}
\end{thm}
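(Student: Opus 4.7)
The plan is to mirror the proof of Theorem~\ref{thm-support-space}, interchanging the roles of the spatial and temporal H\"older semi-norms in accordance with the $1:2:4$ scaling of the stochastic heat equation. The central device is the smooth mollification $h_n$ of $h$ defined by~\eqref{smooth}. Upon setting $v_n := u - h_n$, one observes that $v_n$ satisfies
\begin{equation*}
\partial_t v_n = \tfrac12 \partial_x^2 v_n - g_n(t,x) + \tilde\sigma_n(t,x,v_n)\,\dot W,
\end{equation*}
where $g_n := (\partial_t - \tfrac12 \partial_x^2)h_n$ and $\tilde\sigma_n(t,x,z) := \sigma(t,x,z+h_n(t,x))$. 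The coefficient $\tilde\sigma_n$ satisfies Assumptions~\ref{assump1} and~\ref{assump2} with the same constants $\cl,\cu,\du$ as $\sigma$, and by Lemma~\ref{deri-approx} the drift obeys $|g_n| \le C n^2$.

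To pick the mollification scale, I would apply \eqref{eq:f-fn-space} of Lemma~\ref{lem:approx} with $\gamma_1 := \tfrac14 - \tfrac{\theta}{2}$, so that $\tfrac12 - 2\gamma_1 = \theta$. The hypothesis $\gamma > \tfrac14 - \tfrac{\theta}{2}$ ensures $\gamma_1 < \gamma$, and the lemma then yields $\sup_{x \in \T} \HS_x^{(\theta)}(h_n - h) \le \epsilon/4$ whenever $n \ge C_2(\epsilon)$, with $C_2(\epsilon)$ given by \eqref{eq:C2}. A direct calculation using $\gamma - \gamma_1 = \gamma + \tfrac\theta2 - \tfrac14$ shows that
\begin{equation*}
n^4 \le C\,\epsilon^{-4(\beta \vee \gamma)/[\beta(\gamma + \theta/2 - 1/4)]},
\end{equation*}
which is exactly the second term in the exponent of the desired lower bound.

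For the upper bound in each of parts (a), (b), (c), I would use the triangle inequality
\begin{equation*}
\left\{\sup_{x\in\T} \HS_x^{(\theta)}(u-h) \le \epsilon/2\right\} \subset \left\{\sup_{x\in\T} \HS_x^{(\theta)}(v_n) \le \epsilon\right\}
\end{equation*}
and then apply the upper bounds of Theorem~\ref{thm1}(b), \ref{thm2}(b), or \ref{thm3}(b) to $v_n$. As highlighted in the proof of Theorem~\ref{thm-support-space}, the deterministic drift $g_n$ only shifts the mean of the relevant temporal increment and does not enter the conditional-variance calculation that powers those upper bounds; likewise, the martingale/Brownian time-change arguments behind \eqref{eq:diff-u-x} and its analogue in Section~3.4 are insensitive to an additive deterministic shift by $h_n$. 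Consequently the upper bound carries no $n$-dependence at all.

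For the lower bound, I would apply the Girsanov-type arguments of Sections~\ref{sec:lb:Gaussian} and~\ref{sec:lb:nonlinear} to $v_n$, treating the drift $g_n$ of size $O(n^2)$ exactly as in Theorem~\ref{thm:drift}. The change of measure removing the drift contributes an $L^2$-norm of at most $\exp(C n^4 t_1/\cl^2)$ per slab $[t_i,t_{i+1}]$ of length $t_1 \asymp \epsilon^{2/\theta}$; iterating over the $I = \lfloor T/t_1 \rfloor$ slabs via the Markov property produces a multiplicative factor $\exp(-CT n^4)$. Combined with the pre-existing lower bounds of Theorems~\ref{thm1}(b), \ref{thm2}(b), \ref{thm3}(b) applied to $\tilde\sigma_n$, this yields a lower bound of the form $\exp(-CT[\epsilon^{-3/\theta} + n^4])$ (and its $\epsilon^{-3/\theta-\eta}$ variant in case (b)), which by the $n^4$ estimate above matches the claim. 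The main obstacle is confirming that the non-Gaussian perturbation scheme of Section~\ref{sec:lb:nonlinear} combines cleanly with the Girsanov step: one must verify that introducing the deterministic drift $g_n$ does not degrade the control on the perturbation term $\widetilde D(t,x)$ beyond the $n^4$ penalty already accounted for, since those tail estimates rely on Lipschitz bounds for $\sigma$ that are insensitive to the shift $h_n$.
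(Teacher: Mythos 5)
Your proposal is correct and follows essentially the same route as the paper, which omits the proof precisely because it is obtained by repeating the argument for Theorem~\ref{thm-support-space} with the roles of space and time exchanged: mollify $h$ via \eqref{smooth} with $\gamma_1=\frac14-\frac\theta2$ in Lemma~\ref{lem:approx}, absorb the drift $g_n=(\partial_t-\frac12\partial_x^2)h_n$ of size $O(n^2)$ by the change-of-measure/drift argument (costing $\exp(-CTn^4)$ in the lower bound and nothing in the upper bound), and invoke the temporal small-ball bounds of Theorems~\ref{thm1}(b), \ref{thm2}(b), \ref{thm3}(b). Your computation that $n^4\asymp\epsilon^{-4(\beta\vee\gamma)/[\beta(\gamma+\frac\theta2-\frac14)]}$ matches the stated exponent, so no essential step is missing.
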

We end with a remark.
\begin{rem}Support theorems involving the H\"older semi-norm used in Theorem \ref{thm:main} can be obtained by a combination of Theorems \ref{thm-support-space} and \ref{thm-support-time}. We leave these to the reader.
\end{rem}

{\bf Acknowledgement:} We thank the referees for a careful reading of the paper and for various suggestions which led to improvement in the exposition. We also thank them for the question raised in Remark \ref{openq}.  This work was initiated when M.F. and M.J. visited POSTECH in September 2019. They would like to thank the mathematics department for the hospitality during their visit. M.J. was partially supported by SERB MATRICS grant MTR/2020/000453 and a CPDA grant from I.S.I., and K.K. is partially supported by the NRF (National Research Foundation of Korea) grants 2019R1A5A1028324 and 2020R1A2C4002077.

\bibliography{hol_small}
\bibliographystyle{amsalpha}

\end{document}